\numberwithin{equation}{section}
\numberwithin{figure}{section}
\theoremstyle{plain}
\newtheorem{thm}{\protect\theoremname}
  \theoremstyle{remark}
  \newtheorem{rem}{\protect\remarkname}
  \theoremstyle{definition}
  \newtheorem{defn}{\protect\definitionname}
  \theoremstyle{plain}
  \newtheorem{prop}{\protect\propositionname}
  \providecommand{\definitionname}{Definition}
  \providecommand{\propositionname}{Proposition}
  \providecommand{\remarkname}{Remark}
\providecommand{\theoremname}{Theorem}
\begin{document}

\title[Hyperbolic quasiperiodic solutions of U-monotone systems ]{Hyperbolic quasiperiodic solutions of U-monotone systems on Riemannian
manifolds}

\author{Igor Parasyuk}

\address{Faculty of Mechanics and Mathematics, Taras Shevchenko National University
of Kyiv, 64/13, Volodymyrska Street, City of Kyiv, Ukraine, 01601 }

\email{pio@univ.kiev.ua}

\subjclass[2010]{37C55; 34C40; 37C65}
\begin{abstract}
We consider a second order non-autonomous system which can be interpreted
as the Newtonian equation of motion on a Riemannian manifold under
the action of time-quasiperiodic force field. The problem is to find
conditions which ensures: (a) the existence of a solution taking values
in a given bounded domain of configuration space and possessing a
bounded derivative; (b) the hyperbolicity of such a solution; (c)
the uniqueness and, as a consequence, the quasiperiodicity of such
a solution. Our approach exploits  ideas of Wa\.zewski topological
principle. The required conditions are formulated in terms of an auxiliary
convex function $U$. We use this function to establish the Landau
type inequality for the derivative of solution, as well as to introduce
the notion of\emph{ $U$}-monotonicity for the system. The \emph{U}-monotonicity
property of the system implies the uniqueness and the quasiperiodicity
of its bounded solution. We also find the bounds for magnitude of
perturbations which do not destroy the quasiperiodic solution.

The results obtained are applied to study the motion of a charged
particle on a unite sphere under the action of time-quasiperiodic
electric and magnetic fields.
\end{abstract}

\keywords{Newtonian equation of motion; Quasiperiodic solution; Riemannian
manifold; Monotone system; Exponential dichotomy }

\maketitle

\section{Introduction}

Let $(\mathcal{M},\left\langle \cdot,\cdot\right\rangle )$ be a smooth
complete connected $m$-dimensional Riemannian manifold with the metric
tensor $\mathfrak{g}=\left\langle \cdot,\cdot\right\rangle $, and
let $\nabla$ be the Levi-Civita connection with respect to $\mathfrak{g}$.
For a given smooth mapping $x(\cdot):I\mapsto\mathcal{M}$ of an interval
$I\subset\mathbb{R}$, denote by $\nabla_{\dot{x}}\dot{x}(t)$ the
covariant derivative of tangent vector field $\dot{x}(\cdot):I\mapsto T\mathcal{M}$
along $x(\cdot)$ at the point $t\in I$. Here $T\mathcal{M}=\bigsqcup_{x\in\mathcal{M}}T_{x}\mathcal{M}$
stands for the total space of the tangent bundle with natural projection
$\pi(\cdot):T\mathcal{M}\mapsto\mathcal{M}$, and $T_{x}\mathcal{M}=\pi^{-1}(x)$
denotes the tangent space to $\mathcal{M}$ at $x$.

This paper aims to study a time-quasiperiodic second-order system
\begin{gather}
\nabla_{\dot{x}}\dot{x}=f(t\omega,x),\label{eq:qp-sys-f}
\end{gather}
 as well as  its perturbation
\begin{gather}
\nabla_{\dot{x}}\dot{x}=f(t\omega,x)+P(t\omega,x)\dot{x},\label{eq:qp-sys-f-P}
\end{gather}
where $f(\cdot,\cdot):\mathbb{T}^{k}\times\mathcal{M}\mapsto T\mathcal{M}$
is a smooth mapping generating the smooth family of vector fields
$\left\{ f(\varphi,\cdot)\right\} _{\varphi\in\mathbb{T}^{k}}$ on
$\mathcal{M}$ parametrized by points of the standard $k$-dimensional
torus $\mathbb{T}^{k}:=\mathbb{R}^{k}/2\pi\mathbb{Z}^{k}$, $\left\{ P(\varphi,\cdot)\right\} _{\varphi\in\mathbb{T}^{k}}$
is a smooth family of $(1,1)$-tensor fields, and $\omega\in\mathbb{R}^{k}$
is the basic frequency vector with rationally independent components.
Systems of such a kind naturally appear as the Newtonian equations
of motion for holonomic mechanical systems undergoing quasiperiodic
excitations and perturbations which are linearly dependent upon velocity.

E.g., consider a mechanical system in Euclidean space $\mathbb{E}^{N}=\left(\mathbb{R}^{N},(\cdot,\cdot)\right)$
endowed with the inner product $(\cdot,\cdot)$. One can introduce
coordinates $y=(y_{1},\ldots,y_{N})$ in such a way that system's
kinetic energy be represented as
\begin{gather*}
\frac{1}{2}(\dot{y},\dot{y})=\frac{1}{2}\sum_{i=1}^{N}\dot{y}_{i}^{2}.
\end{gather*}
Suppose that after imposing constraints the system's configuration
space turns into an $n$-dimensional submanifold $\mathcal{M}$ embedded
into $\mathbb{E}^{N}$ by means of inclusion map $\iota:\mathcal{M}\hookrightarrow\mathbb{R}^{N}$.
The inner product $(\cdot,\cdot)$ induces on $\mathcal{M}$ the metric
tensor $\mathfrak{g}=\left\langle \cdot,\cdot\right\rangle :=\left(\iota^{\prime}\cdot,\iota^{\prime}\cdot\right)$
, where $\iota^{\prime}:T\mathcal{M}\mapsto\mathbb{E}^{N}$ is the
derivative of $\iota$, and thus the kinetic energy of constrained
system becomes
\begin{gather*}
T(\dot{x})=\frac{1}{2}\left(\iota^{\prime}(x)\dot{x},\iota^{\prime}(x)\dot{x}\right)=\frac{1}{2}\left\langle \dot{x},\dot{x}\right\rangle .
\end{gather*}
 Let $\Phi(t\omega,y,\dot{y})$ be the resultant force acting on the
system and $F(t\omega,x,\dot{x})$ be the generalized force correctly
defined by the relation
\begin{gather*}
\left(\Phi\left(t\omega,\iota(x),\iota^{\prime}(x)\dot{x}\right),\iota^{\prime}(x)\xi\right)=\left\langle F(t\omega,x,\dot{x}),\xi\right\rangle
\end{gather*}
which is required to be true for any vector $\xi\in T_{x}\mathcal{M}$.
It turns out that according to the well known variational principle
of analytical mechanics (see, e.g., \cite{Pars65}) the equation of
motion in coordinate-independent form can be represented as
\begin{gather*}
\nabla_{\dot{x}}\dot{x}=F(t\omega,x,\dot{x}).
\end{gather*}
(In local coordinates $(x_{1},\ldots,x_{n})$, the kinetic energy
has the form $T(\dot{x})=\frac{1}{2}\sum g_{ij}(x)\dot{x_{i}}\dot{x}_{j}$
where $g_{ij}(x)$ are components of metric tensor, and the corresponding
equations are
\begin{gather*}
\ddot{x}_{i}+\sum_{j,l=1}^{n}\Gamma_{jl}^{i}(x)\dot{x}_{j}\dot{x}_{l}=F_{i}(t\omega,x,\dot{x}),\quad i\in\left\{ 1,\ldots,n\right\} ,
\end{gather*}
where $\Gamma_{jl}^{i}(x)$ are the Chistoffel symbols.)

In many cases the dependence of the resultant force upon velocity
$\dot{y}$ is weak and linear. For this reason it is naturally to
consider that the generalize force has the form
\begin{gather*}
F(t\omega,x,\dot{x})=f(t\omega,x)+P(t\omega,x)\dot{x}
\end{gather*}
where $P(\varphi,x)$, in some sense, is small uniformly with respect
to $(\varphi,x)\in\mathbb{T}^{k}\times\mathcal{M}$.

A classical problem for Systems (\ref{eq:qp-sys-f}) and (\ref{eq:qp-sys-f-P})
is whether there exists a quasiperiodic solution with frequency vector
$\omega$, i.e. a solution repesented in the form $x(t)\equiv u(t\omega)$,
where $u(\cdot):\mathbb{T}^{k}\mapsto\mathcal{M}$ is a continuous
mapping. Such a solution will be called $\omega$-quasiperiodic.

In Euclidean configuration space with constant metric tensor, the
above mentioned problem was studied by many authors even in more general
almost periodic case. Non-local existence results for bounded and
almost periodic solutions were obtained under certain monotonicity,
convexity or coercivity conditions using topological principles, methods
of nonlinear analysis, variational approach etc. (see. \cite{BerZha96,Blo88,Blo89,Blo89_1,BloCie97,BloPen01,Car98,Che05,Cie03,Cher74,Kua12,Maw00,Ort09,TruPer86,ZahPar99}).
A detailed enough survey on the problem can be found, e.g., in \cite{Cie03}.

The attempts to extend results of the above papers to systems on Riemannian
manifolds meet essential difficulties, especially in the case of manifolds
where sectional curvature can take positive values. A number of results
in this direction were obtained in \cite{ZahPar99_2,ParRus12,Par14}
by means of variational approach. All these results concern natural
Largangian systems. The Lagrangian density of natural time-quasiperiodic
mechnical system on $\mathcal{M}$ is represented as the difference
o kinetic and potential energy: $L=\left\langle \dot{x},\dot{x}\right\rangle /2-\Pi(t\omega,x)$,
where $\Pi(\cdot,\cdot):\mathbb{T}^{k}\times\mathcal{M}\mapsto\mathbb{R}$.
The corresponding equations of motion has the form~(\ref{eq:qp-sys-f})
where for any $\varphi\in\mathbb{T}^{k}$ the vector field $f(\varphi,\cdot)$
is the gradient of the function $-\Pi(\varphi,\cdot):\mathcal{M}\mapsto\mathbb{R}$.

In the present paper, we obtain a novel results concerning the existence
of bounded as well as quasiperiodic solutions to Systems~(\ref{eq:qp-sys-f})
and (\ref{eq:qp-sys-f-P}). Analogously to the papers \cite{ParRus12,Par14},
the corresponding existence theorems are formulated in terms of $ $
an auxiliary function $U(\cdot)$. In particular, by means of this
function we introduce the notion of $U$-monotonicity for System~(\ref{eq:qp-sys-f}).
The $U$-monotonicity property of the system implies that the associated
variational system with respect to any bounded solution is hyperbolic.
Our results can be regarded as a generalization of those established
in~\cite{ParRus12,Par14}. In contrary to these papers, now we do
not assume $f(\varphi,\cdot)$ necessarily to be the gradient of a
function. Besides, we exploit a version of Wa\.zewski topological
principle instead of variational approach. In such a way we avoid
a cumbersome procedure of transition from generalized quasiperiodic
solutions to classical ones. It should be noted that due to the tools
of global Riemannian geometry we nowhere resorted to the usage of
local coordinates.

The present paper is organized as follows. In Section~\ref{sec:Not-hyp-res},
we formulate our main results, concerning the following issues: (a)
the existence of a solution taking values in a given bounded domain
of configuration space and possessing a bounded derivative; (b) the
hyperbolicity of such a solution; (c) the uniqueness and, as a consequence,
the quasiperiodicity of such a solution. In Section~\ref{sec:Auxiliary-propositions},
a number of important auxiliary propositions are proved, including
the Landau type inequality for the derivative of the bounded solution.
The main theorems are proved in Section~\ref{sec:proofs}. In particular,
here we present an ad hoc proof of quasiperiodicity without referring
to the well known Amerio theorem~\cite{Ame55}. Finally, in Section~\ref{sec:charged particle},
the results obtained are applied to establish conditions under which
the system governing the motion of charged particle in time-quasiperiodic
electric field has a hyperbolic quasiperiodic solution. We also show
that the perturbation of the system by sufficiently small time-quasiperiodic
magnetic field together with the force of friction does not destroy
such a quasiperiodic solution. The admissible magnitude of perturbation
is estimated.

\section{Notations and main results\label{sec:Not-hyp-res}}

In what follows we shall use the following notations: $\mathcal{F}$
is the space of smooth (i.e. infinitely differentiable) real-valued
functions on $\mathcal{M}$; $T_{x}\mathcal{M}$ is the tangent space
at the point $x\in\mathcal{M}$; $\mathcal{T}$ is the space of smooth
vector fields on $\mathcal{M}$; $\left\Vert \cdot\right\Vert :=\sqrt{\left\langle \cdot,\cdot\right\rangle }$
is the norm defined by $\mathfrak{g}$; $\nabla_{\xi}v(x)$ is the
covariant derivative of a vector field $v(\cdot)\in\mathcal{T}$ along
a tangent vector $\xi$ at point $x=\pi(\xi)$; for any fixed $\varphi\in\mathbb{T}^{k}$,
$\nabla_{\xi}f(\varphi,x)$ and $\nabla_{\xi}P(\varphi,x)$ are, respectively,
the covariant derivatives of tensor fields $f(\varphi,\cdot)$ and
$P(\varphi,\cdot)$ along $\xi\in T_{x}\mathcal{M}$; $\nabla f(\varphi,\cdot)$
and $\nabla P(\varphi,\cdot)$ are, respectively, $(1,1)$- and $(2,1)$-tensor
fields such that $\nabla f(\varphi,x)\xi=\nabla_{\xi}f(\varphi,x)$,
$\nabla P(\varphi,x)(\xi,\eta)=\nabla_{\xi}P(\varphi,x)\eta$ for
any $\xi,\eta\in T_{x}\mathcal{M}$; $\nabla U(x)\in T_{x}\mathcal{M}$
and $H_{U}$$(x):T_{x}\mathcal{M}\mapsto T_{x}\mathcal{M}$ are, respectively,
the gradient vector and the Hesse form at $x$ of a function $U(\cdot)\in\mathcal{F}$
(by the definition $\left\langle H_{U}(x)\xi,\eta\right\rangle =\left\langle \nabla_{\xi}\nabla U(x),\eta\right\rangle $
for any $x\in\mathcal{M}$ and any $\xi,\eta\in T_{x}\mathcal{M}$
); if $W(\cdot,\cdot):\mathbb{T}^{k}\times\mathcal{M}\mapsto\mathbb{R}$
is a smooth function, then for any fixed $\varphi\in\mathbb{T}$ the
function $W(\varphi,\cdot)\in\mathcal{F}$ naturally defines the gradient
$\nabla W(\varphi,x)$ and the Hesse form $H_{W}(\varphi,x)$ at the
point $x$.

Let $R$ be the curvature tensor of Levi \textendash{} Civita connection
(defined as in~\cite{GKM68}) $\sigma=\sigma(\xi,\eta)$ be a 2-dimensional
plane spanned on linearly independent vectors $\xi,\eta\in T_{x}\mathcal{M}$.
Then
\begin{gather*}
K_{\sigma}(x):=\frac{\left\langle R(\xi,\eta)\eta,\xi\right\rangle }{\left\Vert \xi\right\Vert ^{2}\left\Vert \eta\right\Vert ^{2}-\left\langle \xi,\eta\right\rangle ^{2}}
\end{gather*}
is the Riemannian curvature in direction $\sigma$ at the point $x\in\mathcal{M}$
(see, e.g. \cite{GKM68}). Denote by $\mathfrak{G}_{x}^{2}$ the Grassmann
manifold of 2-dimensional linear subspaces in $T_{x}\mathcal{M}$
and define

\begin{gather*}
K(x):=\max\left\{ 0,\sup\left\{ K_{\sigma}(x):\sigma\in\mathfrak{G}_{x}^{2}\right\} \right\} .
\end{gather*}

Set
\begin{gather}
M_{f}(x):=\max\left\{ \left\Vert f(\varphi,x)\right\Vert :\varphi\in\mathbb{T}^{k}\right\} ,\label{eq:def-Mf}\\
\Lambda_{P}(x):=\max\left\{ \left\langle P(\varphi,x)\xi,\xi\right\rangle :\varphi\in\mathbb{T}^{k},\;\xi\in T_{x}\mathcal{M},\;\left\Vert \xi\right\Vert =1\right\} \label{eq:def-LP}
\end{gather}
Now let us formulate the results concerning the existence of bounded
solutions to Systems~(\ref{eq:qp-sys-f}) and~(\ref{eq:qp-sys-f-P}).
\begin{thm}
\label{thm:Th-Bound-sol-f}Let the following hypotheses be satisfied:

\textbf{H1:} there exist a function $U(\cdot)\in\mathcal{F}$ and
a bounded domain $\mathcal{D}\subset\mathcal{M}$ such that
\begin{gather}
\lambda_{U}(x):=\min\left\{ \left\langle H_{U}(x)\xi,\xi\right\rangle :\xi\in T_{x}\mathcal{M},\left\Vert \xi\right\Vert =1\right\} >0\quad\forall x\in\mathrm{cl}(\mathcal{D})\label{eq:cond_U}
\end{gather}
and
\begin{gather*}
\min\left\{ \left\langle \nabla U(x),f(\varphi,x)\right\rangle :(\varphi,x)\in\mathbb{T}^{k}\times\mathrm{cl}(\mathcal{D})\right\} <0;
\end{gather*}

\textbf{H2:} the boundary $\partial\mathcal{D}$ of the domain $\mathcal{D}$
is a smooth hypersurface and for any $(\varphi,x)\in\mathbb{T}^{k}\times\partial\mathcal{D}$
there hold the inequalities
\begin{gather*}
\left\langle \nu(x),f(\varphi,x)\right\rangle >0,\quad\lambda_{II}(x)>0
\end{gather*}
where $\nu(x)$ and $\lambda_{II}(x)$ stand, respectively, for the
unite vector of outward normal and the minimal principal curvature
of the boundary%
\footnote{Here the second fundamental tensor for $\partial\mathcal{D}$ with
respect to inward normal vector field is defined in the following
way: $ $$\left\langle II(x)\xi,\eta\right\rangle =\left\langle \nabla_{\xi}\nu(x),\eta\right\rangle \quad\forall\xi,\eta\in T_{x}\partial\mathcal{M}.$%
} at point $x\in\partial\mathcal{D}$, i.e.
\begin{gather*}
\lambda_{II}(x):=\min\left\{ \left\langle \nabla_{\xi}\nu(x),\xi\right\rangle :\xi\in T_{x}\partial\mathcal{M},\left\Vert \xi\right\Vert =1\right\} .
\end{gather*}

Then System(\ref{eq:qp-sys-f-P}) has a solution $x_{\ast}(\cdot):\mathbb{R}\mapsto\mathcal{D}$
such that
\[
\sup_{t\in\mathbb{R}}\left\Vert \dot{x}_{\ast}(t)\right\Vert \le z_{\ast}:=q\zeta_{\ast}(C_{f}C_{U}/q^{2})
\]
 where
\begin{gather}
C_{f}:=\max\left\{ \frac{M_{f}(x)}{\lambda_{U}(x)}:x\in\mathrm{cl}(\mathcal{D})\right\} ,\quad C_{U}:=\max\left\{ \left\Vert \nabla U(x)\right\Vert :\; x\in\mathrm{cl(\mathcal{D})}\right\} ,\label{eq:def-Cf-CU}\\
q:=\sqrt{\max\left\{ -\frac{\left\langle \nabla U(x),f(\varphi,x)\right\rangle }{\lambda_{U}(x)}:\left(\varphi,x\right)\in\mathbb{T}^{k}\times\mathrm{cl}(\mathcal{D})\right\} ,}\label{eq:def-q1}
\end{gather}
 and $\zeta_{\ast}(m)$ stands for  the greatest root of the polynomial
$\zeta\mapsto$$\zeta^{3}-3\zeta+2-3m$.\end{thm}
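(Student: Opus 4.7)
The plan is to apply the Wa\.zewski topological principle to the autonomized flow
\[
\dot{\varphi}=\omega,\qquad\nabla_{\dot{x}}\dot{x}=f(\varphi,x)+P(\varphi,x)\dot{x}
\]
on the extended phase space $\mathbb{T}^{k}\times T\mathcal{M}$. The natural isolating block is
\[
\mathcal{W}:=\bigl\{(\varphi,\xi)\in\mathbb{T}^{k}\times T\mathcal{M}:\pi(\xi)\in\mathrm{cl}(\mathcal{D}),\;\|\xi\|\le z_{\ast}\bigr\},
\]
whose boundary decomposes into a \emph{roof} $\mathcal{W}_{v}:=\{\|\xi\|=z_{\ast}\}$ and a \emph{wall} $\mathcal{W}_{\partial}:=\{\pi(\xi)\in\partial\mathcal{D}\}$. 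The two things to verify are that no solution starting inside $\mathcal{W}$ crosses $\mathcal{W}_{v}$ outward, and that every solution touching $\mathcal{W}_{\partial}$ leaves $\mathcal{W}$ transversally.

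Impermeability of the roof should follow from the Landau-type inequality to be proved in Section~\ref{sec:Auxiliary-propositions}. Along a solution with image in $\mathrm{cl}(\mathcal{D})$, the auxiliary scalar $h(t):=\langle\nabla U(x(t)),\dot{x}(t)\rangle$ satisfies, by metric compatibility of $\nabla$,
\[
\dot{h}=\langle H_{U}(x)\dot{x},\dot{x}\rangle+\langle\nabla U(x),f(t\omega,x)+P(t\omega,x)\dot{x}\rangle,
\]
and H1 together with the definition of $q$ gives $\dot{h}\ge\lambda_{U}(x)\bigl(\|\dot{x}\|^{2}-q^{2}\bigr)$ in the unperturbed case (the $P$-term being absorbed through an analogous estimate when $\Lambda_{P}$ is small). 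Pairing this with the a priori bounds $|h|\le C_{U}\|\dot{x}\|$ and $M_{f}(x)/\lambda_{U}(x)\le C_{f}$ on $\mathrm{cl}(\mathcal{D})$, and optimizing over a short time window around a near-maximum of $\|\dot{x}\|$, produces the cubic $\zeta^{3}-3\zeta+2-3m\le 0$ with $\zeta=\|\dot{x}\|/q$ and $m=C_{f}C_{U}/q^{2}$, whose largest root is precisely $\zeta_{\ast}(m)$. Thus $\|\dot{x}\|\le q\zeta_{\ast}(m)=z_{\ast}$.

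For the wall I would fix a smooth defining function $\rho$ for $\partial\mathcal{D}$ with $\rho>0$ outside $\mathcal{D}$ and $\nabla\rho|_{\partial\mathcal{D}}=\nu$, and compute along a solution meeting $\partial\mathcal{D}$ at $t_{0}$ with $x_{0}:=x(t_{0})$:
\[
\frac{d}{dt}\rho(x(t))\Big|_{t=t_{0}}=\langle\nu,\dot{x}(t_{0})\rangle,\quad\frac{d^{2}}{dt^{2}}\rho(x(t))\Big|_{t=t_{0}}=\langle H_{\rho}(x_{0})\dot{x}(t_{0}),\dot{x}(t_{0})\rangle+\langle\nu,f+P\dot{x}\rangle.
\]
If $\langle\nu,\dot{x}(t_{0})\rangle>0$ the solution leaves $\mathcal{D}$ immediately; if $\langle\nu,\dot{x}(t_{0})\rangle\le 0$, then $\dot{x}(t_{0})$ is essentially tangent to $\partial\mathcal{D}$, the quadratic form contributes $\ge\lambda_{II}(x_{0})\|\dot{x}(t_{0})\|^{2}\ge 0$, and the constant term $\langle\nu,f\rangle$ is strictly positive by H2, so $\ddot{\rho}(x(t_{0}))>0$ and the solution still exits and does not return near $x_{0}$. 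Hence every boundary contact is an outward strict egress, and the Wa\.zewski principle applies: otherwise the continuous first-exit map would yield a retraction of $\mathcal{W}$ onto $\mathcal{W}_{\partial}$, contradicting the non-retractability of $\mathrm{cl}(\mathcal{D})$ onto $\partial\mathcal{D}$.

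A two-sided orbit is then extracted either by rerunning the argument on the time-reversed equation (H1, H2 are symmetric under $\omega\mapsto -\omega$, so the roles of entrance and egress simply swap) or by a diagonal compactness passage on the windows $[-T,T]$, using that $\mathcal{W}$ is compact and $\nabla_{\dot x}\dot x$ is controlled there. The \textbf{main obstacle} is the Landau-type inequality itself: on a curved configuration space the scalar Kolmogorov--Landau bound does not apply directly, and the interaction of the Hesse form $H_{U}$ with $f$ (and with $P\dot{x}$) must be tracked carefully; the clean reduction to the explicit cubic $\zeta^{3}-3\zeta+2-3m$, whose largest root encodes $z_{\ast}$, is the technical heart of the argument, and is precisely what H1 is tailored to deliver.
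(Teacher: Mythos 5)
Your overall strategy (Wa\.zewski principle plus a Landau-type velocity bound plus a compactness passage) is the same as the paper's, but your choice of isolating block has a genuine gap: the ``roof'' $\mathcal{W}_{v}=\{\|\xi\|=z_{\ast}\}$ is not impermeable. The bound $z_{\ast}$ is not a pointwise barrier for the speed; at a point where $\|\dot x\|=z_{\ast}$ one only has $\frac{\mathrm{d}}{\mathrm{d}t}\|\dot x\|^{2}=2\langle\dot x,f\rangle$, which can well be positive. The estimate $\|\dot x\|\le z_{\ast}$ is an integral consequence of the differential inequality $\dot v\ge\lambda_{U}(x)\left(\|\dot x\|^{2}-q^{2}\right)$ for $v=\langle\nabla U(x),\dot x\rangle$, obtained by comparing $\int\dot v$ with $\int\frac{\mathrm{d}}{\mathrm{d}t}I(\|\dot x\|)$ over an excursion interval on which $\|\dot x\|>z_{+}$ (with $z_{+}=q$ when $P\equiv0$), and it is valid only for solutions whose speed starts at, or returns to, the level $z_{+}$. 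A trajectory entering your block with $\|\xi\|$ close to $z_{\ast}$ can cross the roof, so the exit set is not contained in $\mathcal{W}_{\partial}$ and the retraction you invoke does not exist. The paper circumvents exactly this by not using a velocity roof at all: at each starting time $s$ it launches the solutions with initial velocity given by the section $\xi=\epsilon\bar f(x)$, where $\bar f$ is the torus average of $f$ and $\epsilon$ is so small that $\|\epsilon\bar f\|\le z_{+}$; then Proposition~\ref{prop:bound-for-dotx-Pert} controls the speed for as long as the orbit stays in $\mathrm{cl}(\mathcal{D})$, and the Wa\.zewski retraction is a map $\mathrm{cl}(\mathcal{D})\to\partial\mathcal{D}$ built from the smooth first-exit time $T(x)$ of the base point alone, whose non-existence is the standard fact that the boundary of a compact manifold is not a retract of the manifold.

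Two further points. First, on the wall you assert that $\langle\nu,\dot x(t_{0})\rangle\le0$ forces $\dot x(t_{0})$ to be ``essentially tangent'': if $\langle\nu,\dot x(t_{0})\rangle<0$ the orbit is entering $\mathcal{D}$, the point is an ingress rather than egress point, and moreover the bound $\langle H_{\rho}\dot x,\dot x\rangle\ge\lambda_{II}\|\dot x\|^{2}$ is only justified for tangential $\dot x$. The correct order of the argument (as in the paper) is: at the first exit time $\frac{\mathrm{d}}{\mathrm{d}t}G\ge0$ automatically; if it vanished, the velocity would be tangential, the second derivative would be positive by \textbf{H2}, and $G$ would have a strict local minimum there, contradicting $G<0$ just before; hence the exit is strict and $T(\cdot)$ is smooth up to the boundary. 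Second, time reversal does not produce a two-sided orbit, since the backward and forward half-orbits have no reason to match velocities at $t=0$; the diagonal compactness passage over $s=-i\to-\infty$ (your fallback, and the paper's actual argument) is the one that works, followed by one more application of the strict-minimum argument to push the limit orbit off $\partial\mathcal{D}$ into the open domain $\mathcal{D}$.
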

\begin{rem}
If $\lambda_{U}(x)>0$ and $\left\langle \nabla U(x),f(\varphi,x)\right\rangle $
is non-negative in $\mathbb{T}^{k}\times\mathrm{cl}(\mathcal{D})$,
then System~(\ref{eq:qp-sys-f}) does not have non-constant bounded
on $\mathbb{R}_{+}$ solutions (see Remark~\ref{rem:C-le-0} below).
\end{rem}
The proof of Theorem~\ref{thm:Th-Bound-sol-f} remains correct if
instead of (\ref{eq:def-q1}) we put $q=\sqrt{C_{f}C_{U}}$. Since
the greatest root of the polynomial $\zeta^{3}-3\zeta-1$ does not
exceed $1.88$, then the above estimate for $\left\Vert \dot{x}_{\ast}(t)\right\Vert $
can be replaced by the following one
\begin{gather*}
\sup_{t\in\mathbb{R}}\left\Vert \dot{x}_{\ast}(t)\right\Vert \le1.88\sqrt{C_{f}C_{U}}.
\end{gather*}

\begin{rem}
In \cite{Cie03}, for a system $\ddot{x}=f(t\omega,x)$ in Euclidean
space $\mathbb{E}^{n}$ an estimate for derivative of solution $x(\cdot):\mathbb{R}\mapsto\mathcal{B}_{R}$:=$\left\{ x:\left\Vert x\right\Vert \le R\right\} $
is obtained by means of the Landau inequality. With the Hadamard best
possible constant, this inequality reads as follows
\begin{gather*}
\sup_{t\in\mathbb{R}}\left\Vert \dot{x}(t)\right\Vert \le\sqrt{2\sup_{t\in\mathbb{R}}\left\Vert x(t)\right\Vert \sup_{t\in\mathbb{R}}\left\Vert \ddot{x}(t)\right\Vert }.
\end{gather*}
If we take $U(x):=\left\Vert x\right\Vert ^{2}/2$, then $C_{U}=R$,
$\lambda_{U}=1$, and
\[
\sup_{t\in\mathbb{R}}\left\Vert \ddot{x}(t)\right\Vert \le\max\left\{ \left\Vert f(\varphi,x)\right\Vert :\varphi\in\mathbb{T}^{k},\left\Vert x\right\Vert \le R\right\} =C_{f}.
\]
Thus, in the case of $\mathcal{M}=\mathbb{E}^{n}$, the Landau-Hadamard
inequality yields somewhat better estimate $\sup_{t\in\mathbb{R}}\left\Vert \dot{x}(t)\right\Vert \le\sqrt{2C_{f}C_{U}}$.
\end{rem}
Now let us proceed to the perturbed system. Set
\begin{gather}
l:=\max\left\{ \frac{\Lambda_{P}(x)}{M_{f}(x)}:x\in\mathrm{cl}(\mathcal{D})\right\} ,\label{eq:def-l}\\
p:=\max\left\{ \frac{\left\Vert P^{\ast}(\varphi,x)\nabla U(x)\right\Vert }{\lambda_{U}(x)}:\left(\varphi,x\right)\in\mathbb{T}^{k}\times\mathrm{cl}(\mathcal{D})\right\} \nonumber
\end{gather}
where the conjugate $P^{\ast}(\varphi,x)$ is defined in a standard
way: $\left\langle P(\varphi,x)\xi,\eta\right\rangle =\left\langle \xi,P^{\ast}(\varphi,x)\eta\right\rangle $
for all $\xi,\eta\in T_{x}\mathcal{M}$. Introduce the numbers
\begin{gather}
z_{\pm}:=\frac{p\mbox{\ensuremath{\pm}}\sqrt{4q^{2}+p^{2}}}{2}\label{eq:def-z-pm}
\end{gather}
 (the roots of the polynomial $z\mapsto z^{2}-pw-q^{2}$) and the
function
\begin{gather*}
I(z):=\intop_{z_{+}}^{z}\frac{\left(w^{2}-pw-q^{2}\right)}{lw+1}\mathrm{d}w=\\
\frac{1}{l}\left[\frac{1}{2}w^{2}-\frac{1+pl}{l}w+\frac{1+pl-q^{2}l^{2}}{l^{2}}\ln(1+lw)\right]\Biggl|_{z_{+}}^{z}.
\end{gather*}

\begin{thm}
\label{thm:Th-B-sol-f-P}Let Hypotheses \textbf{H1} and \textbf{H2}
be satisfied, and in addition,
\begin{gather}
4\lambda_{II}(x)\left\langle \nu(x),f(\varphi,x)\right\rangle >\left\Vert P^{\ast}(\varphi,x)\nu(x)\right\Vert ^{2}\quad\forall(\varphi,x)\in\mathbb{T}^{k}\times\partial\mathcal{D}.\label{eq:4lambda-ge}
\end{gather}
 Then System(\ref{eq:qp-sys-f-P}) has a solution $x_{\ast}(\cdot):\mathbb{R}\mapsto\mathcal{D}$
such that \textup{$\sup_{t\in\mathbb{R}}\left\Vert \dot{x}_{\ast}(t)\right\Vert $}
does not exceed the root $z_{\ast}\in[z_{+},\infty)$ of equation
$I(z)=C_{f}C_{U}z_{+}$. \end{thm}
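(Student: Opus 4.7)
The plan is to adapt the Wa\.zewski topological principle strategy from Theorem~\ref{thm:Th-Bound-sol-f} to the perturbed system, with the velocity bound now given implicitly by the integral $I(z)$. I view~(\ref{eq:qp-sys-f-P}) as a non-autonomous first-order flow $\dot{x}=v$, $\nabla_{\dot{x}}v=f(t\omega,x)+P(t\omega,x)v$ on $T\mathcal{M}$, and aim to construct a Wa\.zewski set $W\subset T\mathcal{M}|_{\mathrm{cl}(\mathcal{D})}$ whose boundary decomposes into two immediate-exit pieces: a \emph{configuration wall} over $\partial\mathcal{D}$, and a \emph{velocity wall} on which a suitable barrier built from $I$ vanishes. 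A standard topological argument then produces a solution $x_{*}(\cdot)$ that never exits $W$.

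Along any trajectory staying in $\mathrm{cl}(\mathcal{D})$, set $\phi(t):=U(x(t))$ and $w(t):=\|\dot{x}(t)\|$. Using H1 together with the definitions of $q$ in~(\ref{eq:def-q1}) and of $p$, one finds
\begin{equation*}
\ddot{\phi}=\langle H_{U}\dot{x},\dot{x}\rangle+\langle\nabla U,f\rangle+\langle P^{*}\nabla U,\dot{x}\rangle\ge\lambda_{U}(x)\bigl(w^{2}-pw-q^{2}\bigr),
\end{equation*}
so $\ddot{\phi}>0$ whenever $w>z_{+}$. Differentiating $w^{2}/2$ and using~(\ref{eq:def-Mf}), (\ref{eq:def-LP}), (\ref{eq:def-l}),
\begin{equation*}
w\dot{w}=\langle f+P\dot{x},\dot{x}\rangle\le M_{f}(x)w+\Lambda_{P}(x)w^{2}\le M_{f}(x)\bigl(1+lw\bigr)w,
\end{equation*}
so $\dot{w}\le M_{f}(x)(1+lw)$. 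These two bounds are exactly the ingredients that appear in the integrand and denominator of $I(z)$ upon eliminating time in favour of $w$.

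The configuration wall is arranged by~(\ref{eq:4lambda-ge}): if a trajectory is tangent to $\partial\mathcal{D}$ at $t_{0}$, then the second derivative of the signed distance to $\partial\mathcal{D}$ at $t_{0}$ satisfies
\begin{equation*}
\ddot{d}(t_{0})\ge\lambda_{II}(x)w^{2}-\|P^{*}(\varphi,x)\nu(x)\|\,w+\langle\nu(x),f(\varphi,x)\rangle,
\end{equation*}
and the right-hand quadratic in $w$ is strictly positive for all $w\ge0$ precisely because its discriminant is negative by~(\ref{eq:4lambda-ge}); hence the trajectory exits $\mathrm{cl}(\mathcal{D})$. The velocity wall is obtained from the extremum analysis of $\dot{\phi}$: at a (local or global) extremum of $\dot{\phi}$, $\ddot{\phi}=0$ combined with the lower bound above forces $w\le z_{+}$, and hence $|\dot{\phi}|\le C_{U}z_{+}$ at such instants. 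On an ascending arc of $w$ from $z_{+}$ up to $z$, the change of variable $dt\ge dw/(M_{f}(1+lw))$ together with $\lambda_{U}/M_{f}\ge 1/C_{f}$ yields
\begin{equation*}
\dot{\phi}(t_{2})-\dot{\phi}(t_{1})\ge\frac{1}{C_{f}}\int_{z_{+}}^{z}\frac{s^{2}-ps-q^{2}}{1+ls}\,ds=\frac{I(z)}{C_{f}}.
\end{equation*}
Pairing this with the extremal estimate $|\dot{\phi}|\le C_{U}z_{+}$ produces $I(z)\le C_{f}C_{U}z_{+}$ whenever $w$ attains the level $z$, so $z\le z_{*}$.

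The main obstacle is extracting the sharp constant $C_{f}C_{U}z_{+}$, rather than $2C_{f}C_{U}z_{+}$, in the last inequality; this will require arranging the endpoints of the ascending arc to coincide (perhaps in a limiting sense) with extrema of $\dot{\phi}$, using compactness of $\mathrm{cl}(\mathcal{D})\times\mathbb{T}^{k}$ and the Landau-type inequality promised in Section~\ref{sec:Auxiliary-propositions}. A secondary task is the explicit choice of a barrier function whose zero set constitutes the velocity wall of $W$ and for which the immediate-exit property is verified directly from the two differential inequalities above, ensuring that the Wa\.zewski principle applies.
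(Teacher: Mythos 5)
Your overall strategy (Wa\.zewski principle plus differential inequalities that reproduce the integrand of $I$) is the same as the paper's, and your two basic estimates $\ddot{\phi}\ge\lambda_{U}(w^{2}-pw-q^{2})$ and $\dot{w}\le M_{f}(x)(1+lw)$, as well as the role of~(\ref{eq:4lambda-ge}) in forcing strict exit through $\partial\mathcal{D}$, are exactly right. However, there are three genuine gaps. First, the sharp constant, which you flag yourself: the missing idea is that on any maximal arc where $w>z_{+}$ the quantity $w$ must eventually return to (arbitrarily near) $z_{+}$ --- otherwise $\dot{\phi}$ stays bounded below by a positive constant and $U(x(t))\to\infty$, contradicting boundedness of $U$ on $\mathrm{cl}(\mathcal{D})$. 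Integrating the two-sided bound $\bigl|\tfrac{\mathrm{d}}{\mathrm{d}t}I(w(t))\bigr|\le C_{f}\dot{\phi}'(t)$ (which also dispenses with your implicit monotonicity assumption on the ``ascending arc'') over both the rise from $z_{+}$ to $z$ and the subsequent descent back to $z_{+}+\varepsilon$ gives $2I(z)-I(z_{+}+\varepsilon)\le C_{f}\,\mathrm{osc}(\dot{\phi})\le 2C_{f}C_{U}z_{+}$, and letting $\varepsilon\to0$ yields $I(z)\le C_{f}C_{U}z_{+}$. This is Proposition~\ref{prop:bound-for-dotx-Pert} in the paper and it cannot be recovered from a single one-sided integration.

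Second, your proposed ``velocity wall'' is the wrong way to set up the Wa\.zewski set: trajectories do not immediately exit through a level set of $\|\dot{x}\|$, so the immediate-exit property you need there cannot be verified. The paper avoids the issue by restricting the initial data at time $s$ to the section $x\mapsto(x,\epsilon\bar{f}(x))$ with $\epsilon$ so small that $\|\epsilon\bar{f}(x)\|\le z_{+}$; then the velocity bound $\le z_{\ast}$ holds automatically for as long as the trajectory stays in $\mathrm{cl}(\mathcal{D})$, the only exit set is $\partial\mathcal{D}$, and the contradiction is that the (smooth, by the strict-exit property and the choice $\langle\nu,\bar{f}\rangle>0$ on $\partial\mathcal{D}$) exit-time map would retract $\mathrm{cl}(\mathcal{D})$ onto $\partial\mathcal{D}$. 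Your sketch does not specify the initial velocities nor why the exit time is continuous up to the boundary, which is the heart of the retraction argument. Third, the Wa\.zewski argument only produces forward-bounded solutions $x_{s}(\cdot):[s,\infty)\mapsto\mathcal{D}$; to obtain a solution on all of $\mathbb{R}$ you must let $s=-i\to-\infty$, extract a convergent subsequence of $(x_{-i}(0),\dot{x}_{-i}(0))$ using the uniform bounds, pass to the limit, and then rule out boundary contact of the limit solution by the strict-minimum argument. This step is absent from your proposal.
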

\begin{rem}
\label{rem: Th-B-sol-f-P} Suppose that $l<z_{+}/q^{2}$, and thus
$1+pl-q^{2}l^{2}>0$. Since $I(z_{+})=I^{\prime}(z_{+})=0$, $I^{\prime\prime\prime}(z)>0$
and $I^{\prime\prime}(z_{+})>2q(1+lz_{+})^{-2}$, then
\begin{gather*}
I(z)\ge\frac{1}{2}I^{\prime\prime}(z_{+})(z-z_{+})^{2}\ge\frac{q(z-z_{+})^{2}}{(1+lz_{+})^{2}},
\end{gather*}
and hence
\begin{gather*}
z_{\ast}\le z_{+}+(1+lz_{+})\sqrt{\frac{C_{f}C_{U}z_{+}}{q}.}
\end{gather*}

\end{rem}
The next two results establish conditions which ensures the hyperbolicity
of bounded solutions. Recall the corresponding notion. Let $x(\cdot,\cdot):(t_{1},t_{2})\times(-\sigma,\sigma)\mapsto\mathcal{M}$
be such a smooth mapping that $x(\cdot,s):(t_{1},t_{2})\mapsto\mathcal{M}$
is a solution of~(\ref{eq:qp-sys-f-P}) for any fixed $s\in(-\sigma,\sigma)$,
the number $\sigma>0$ being sufficiently small. Define two tangent
vector fields along the mapping $x(\cdot,\cdot)$:
\begin{gather*}
\dot{x}(t,s):=\frac{\partial x(t,s)}{\partial t},\quad x^{\prime}(t,s):=\frac{\partial x(t,s)}{\partial s}.
\end{gather*}
 Then $\nabla_{\dot{x}}x^{\prime}=\nabla_{x^{\prime}}\dot{x}$ and
\begin{gather*}
\nabla_{x^{\prime}}\nabla_{\dot{x}}\dot{x}-\nabla_{\dot{x}}\nabla_{x^{\prime}}\dot{x}=R(x^{\prime},\dot{x})\dot{x}.
\end{gather*}
Since
\begin{gather*}
\nabla_{x^{\prime}}\nabla_{\dot{x}}\dot{x}=\nabla_{x^{\prime}}\left(f(t\omega,x)+P(t\omega,x)\dot{x}\right)\intertext{then}\nabla_{\dot{x}}^{2}x^{\prime}=\nabla f(t,x)x^{\prime}-R(x^{\prime},\dot{x})\dot{x}+\nabla P(t\omega,x)(x^{\prime},\dot{x})+P(t\omega,x)\nabla_{\dot{x}}x^{\prime}.
\end{gather*}
Put here $s=0$ and denote $\tau(t):=\dot{x}(t,0$). The vector fields
\begin{gather*}
\eta(t):=x^{\prime}(t,0),\quad\zeta(t)=\nabla_{\tau}\eta(t),
\end{gather*}
 along the mapping $x(\cdot):=x(\cdot,0)$ satisfy the first order
linear variational system with respect to solution $x(t)$$:$
\begin{gather*}
\\
\end{gather*}
\begin{gather*}
\begin{split}\nabla_{\tau(t)}\eta & =\zeta\\
\nabla_{\tau(t)}\zeta & =\left[\nabla f\left(t\omega,x\right)\eta-R(\eta,\tau(t))\tau(t)+\nabla P\left(t\omega,x\right)(\eta,\tau(t))+P\left(t\omega,x\right)\zeta\right]_{x=x(t)}.
\end{split}
\end{gather*}
If the solution $x(t)$ is extendable on the whole real axis and the
variational system is exponentially dichotomic on $\mathbb{R}$, then
such a solution is called hyperbolic.
\begin{defn}
We shall say that System~(\ref{eq:qp-sys-f}) is $U$-monotone in
$\mathcal{D}$ if there exists $U(\cdot)\in\mathcal{F}$ satisfying
the inequalities
\begin{gather}
\lambda_{f}(\varphi,x)+\frac{\left\langle \nabla U(x),f(\varphi,x)\right\rangle }{2}>0\quad\forall(\varphi,x)\in\mathbb{T}^{k}\times\mathrm{cl}(\mathcal{D}),\label{eq:lamb_f+nablaU}
\end{gather}

\begin{gather}
\mu_{U}(x)\ge2K(x)\quad\forall x\in\mathcal{D}\label{eq:cond-mu}
\end{gather}
where
\begin{gather*}
\lambda_{f}(\varphi,x):=\min\left\{ \left\langle \nabla f(\varphi,x)\eta,\eta\right\rangle :\eta\in T_{x}\mathcal{M},\left\Vert \eta\right\Vert =1\right\} ,
\end{gather*}
\begin{gather}
\mu_{U}(x):=\min\left\{ \left\langle H_{U}(x)\eta,\eta\right\rangle -\frac{\left\langle \nabla U(x),\eta\right\rangle ^{2}}{2}:\eta\in T_{x}\mathcal{M},\left\Vert \eta\right\Vert =1\right\} .\label{eq:def-mu_U}
\end{gather}
\end{defn}
\begin{rem}
Let $\mathcal{M}=\mathbb{E}^{n}$, $K(x)\equiv0$. The standard monotonicity
condition for a second-order system $\ddot{x}=f(t\omega,x)$ requires
the quadratic form $y\mapsto\left\langle f_{x}^{\prime}(\varphi,x)y,y\right\rangle $
to be positive definite. In such a case, if $x(\cdot)$ is a solution
of the second order system, then the indefinite quadratic form $\left\langle y,z\right\rangle $
in $\mathbb{R}^{n}\times\mathbb{R}^{n}$ has a positive definite derivative
along any solution of variational system $\dot{y}=z,\;\dot{z}=f_{x}^{\prime}(t\omega,x(t))y$
equivalent to second order linear system $\ddot{y}=f_{x}^{\prime}(t\omega,x(t))y$,
and thus the variational system is dichotomic \cite{Sam2002}. $ $If
$\mathcal{M}$ is such a manifold that $K(x)>0$, then one can try
to ensure the $U$-monotonicity by means of appropriate choice of
function $U(\cdot)$. As will be shown below, if (\ref{eq:qp-sys-f})
is $U$-monotone, then the modified indefinite non-degenerate quadratic
form
\begin{gather*}
(\eta,\zeta)\mapsto\left\langle \eta,\zeta\right\rangle +\frac{\left\langle \nabla U(x(t)),\dot{x}(t)\right\rangle \left\Vert \eta\right\Vert ^{2}}{2}
\end{gather*}
 has positive derivative along solutions of variational system
\begin{gather*}
\begin{split}\nabla_{\tau(t)}\eta & =\zeta,\\
\nabla_{\tau(t)}\zeta & =\left[\nabla f\left(t\omega,x\right)\eta-R(\eta,\tau(t))\tau(t)\right]_{x=x(t)}.
\end{split}
\end{gather*}
Thus, this system is hyperbolic. \end{rem}
\begin{thm}
\label{thm:hyperbol} Let the following hypothesis be satisfied

\textbf{H3:} System~(\ref{eq:qp-sys-f}) is $U$-monotone in $\mathcal{D}$.

If $x(\cdot):\mathbb{R}\mapsto\mathcal{D}$ is a solution of System~(\ref{eq:qp-sys-f})
such that $\sup_{t\in\mathbb{R}}\left\Vert \dot{x}(t)\right\Vert <\infty$,
then this solution is hyperbolic.
\end{thm}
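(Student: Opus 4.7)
The plan is to construct an indefinite non-degenerate quadratic form $V$ on the pullback bundle $(x^{*}T\mathcal{M})\oplus(x^{*}T\mathcal{M})$ whose derivative along solutions of the variational system is uniformly positive definite, and then invoke a classical Lyapunov-type criterion (Maizel / Daletskii--Krein) to conclude exponential dichotomy on $\mathbb{R}$. As indicated by the remark preceding the statement, I take
\begin{gather*}
V(t;\eta,\zeta):=\langle\eta,\zeta\rangle+\tfrac{1}{2}\langle\nabla U(x(t)),\dot{x}(t)\rangle\,\|\eta\|^{2}.
\end{gather*}
Because $x(\mathbb{R})\subset\mathrm{cl}(\mathcal{D})$ is contained in a compact set and $\sup_{t}\|\dot{x}(t)\|<\infty$, the scalar coefficient $\langle\nabla U(x(t)),\dot{x}(t)\rangle$ is uniformly bounded; a block computation then shows $V$ is fibrewise non-degenerate of constant signature $(m,m)$ and that this non-degeneracy is uniform in $t$.

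The main computational step is the evaluation of $\dot{V}$ along a solution $(\eta(t),\zeta(t))$ of the variational system $\nabla_{\tau}\eta=\zeta$, $\nabla_{\tau}\zeta=\nabla f(t\omega,x)\eta-R(\eta,\tau)\tau$. Differentiating and using $\tfrac{d}{dt}\|\eta\|^{2}=2\langle\eta,\zeta\rangle$, as well as $\tfrac{d}{dt}\langle\nabla U(x),\dot{x}\rangle=\langle H_{U}(x)\tau,\tau\rangle+\langle\nabla U(x),f(t\omega,x)\rangle$ (by the definition of $H_{U}$ and by~(\ref{eq:qp-sys-f})), together with the bounds $\langle\nabla f(t\omega,x)\eta,\eta\rangle\ge\lambda_{f}(t\omega,x)\|\eta\|^{2}$ and $\langle R(\eta,\tau)\tau,\eta\rangle\le K(x)\|\eta\|^{2}\|\tau\|^{2}$, and finally completing the square in $\zeta$ in order to absorb the cross-term $\langle\nabla U,\tau\rangle\langle\eta,\zeta\rangle$, one arrives at
\begin{gather*}
\dot{V}\ge\bigl\|\zeta+\tfrac{1}{2}\langle\nabla U,\tau\rangle\,\eta\bigr\|^{2}+A(t)\,\|\eta\|^{2},\\
A(t):=\lambda_{f}(t\omega,x)+\tfrac{1}{2}\langle\nabla U,f\rangle+\tfrac{1}{2}\langle H_{U}\tau,\tau\rangle-\tfrac{1}{4}\langle\nabla U,\tau\rangle^{2}-K(x)\|\tau\|^{2}.
\end{gather*}

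The two inequalities of Hypothesis \textbf{H3} are tailored precisely to make $A(t)$ uniformly positive: applying the definition of $\mu_{U}$ with argument $\tau$ yields
\begin{gather*}
\tfrac{1}{2}\langle H_{U}\tau,\tau\rangle-\tfrac{1}{4}\langle\nabla U,\tau\rangle^{2}\ge\tfrac{1}{2}\mu_{U}(x)\|\tau\|^{2}\ge K(x)\|\tau\|^{2}
\end{gather*}
by~(\ref{eq:cond-mu}), so the last three terms of $A(t)$ combine to something non-negative, while the remainder $\lambda_{f}+\tfrac{1}{2}\langle\nabla U,f\rangle$ is bounded below by some $c_{0}>0$ on the compact set $\mathbb{T}^{k}\times\mathrm{cl}(\mathcal{D})$ thanks to~(\ref{eq:lamb_f+nablaU}). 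Hence $\dot{V}$ dominates the strictly positive-definite quadratic form $\|\zeta+\tfrac{1}{2}\langle\nabla U,\tau\rangle\eta\|^{2}+c_{0}\|\eta\|^{2}$, uniformly in $t\in\mathbb{R}$.

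To convert this pointwise estimate into an honest exponential dichotomy I parallel-transport an orthonormal frame along $x(\cdot)$; this trivializes $x^{*}T\mathcal{M}$ and replaces $\nabla_{\tau}$ by $d/dt$, so the variational system becomes a linear ODE $\dot{Y}=B(t)Y$ in $\mathbb{R}^{2m}$ with bounded coefficients, while $V$ is represented by a symmetric matrix $S(t)$ which is uniformly bounded, uniformly invertible, of constant inertia $(m,m)$, and such that $\dot{S}+B^{\top}S+SB$ is uniformly positive definite. The classical Maizel / Daletskii--Krein criterion then delivers an exponential dichotomy on $\mathbb{R}$, which is exactly the hyperbolicity of $x(\cdot)$. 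The main obstacle in executing this plan is the algebraic coordination of the ingredients of $A(t)$: completing the square produces the penalty $-\tfrac{1}{4}\langle\nabla U,\tau\rangle^{2}\|\eta\|^{2}$ which must be cancelled by exactly half of the $\tfrac{1}{2}\langle H_{U}\tau,\tau\rangle\|\eta\|^{2}$ piece, leaving the remaining half to dominate the curvature obstruction $K(x)\|\tau\|^{2}$; this delicate balancing is precisely what the formula for $\mu_{U}$ in~(\ref{eq:def-mu_U}) and the bound~(\ref{eq:cond-mu}) are engineered to guarantee.
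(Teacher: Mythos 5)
Your proposal is correct and follows essentially the same route as the paper: the same indefinite form $\langle\eta,\zeta\rangle+\tfrac{1}{2}\langle\nabla U(x(t)),\dot{x}(t)\rangle\|\eta\|^{2}$, the same completion of the square with $\mu_{U}\ge 2K$ and $\lambda_{f}+\tfrac12\langle\nabla U,f\rangle>0$ giving a uniformly positive-definite derivative, and the same parallel-transport trivialization followed by the classical quadratic-form criterion for exponential dichotomy (the paper cites Samoilenko where you cite Maizel/Daletskii--Krein). The only cosmetic difference is that the paper obtains this theorem as the $P\equiv 0$ case of its perturbed version, whereas you argue the unperturbed case directly.
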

The next theorem concerns the perturbed system. It is well known that
sufficiently small perturbations do not destroy the hyperbolic solution
of unperturbed system. With our upproah we are able to establish realistic
bounds for perturbations which preserve the hyperbolicity of solution
contained in $\mathcal{D}$.

Set
\begin{gather}
M_{P}(\varphi,x):=\left\Vert P(\varphi,x)\right\Vert ,\quad M_{U}(x):=\left\Vert \nabla U(x)\right\Vert ,\quad M_{PU}(\varphi,x):=\left\Vert P^{\ast}(\varphi,x)\nabla U(x)\right\Vert ,\label{eq:def-MPU}\\
L_{P}(\varphi,x):=\max\left\{ \left|\left\langle \nabla P(\varphi,x)(\eta,\xi),\eta\right\rangle \right|:\;\xi,\eta\in T_{x}\mathcal{M},\;\left\Vert \xi\right\Vert =\left\Vert \eta\right\Vert =1\right\} .\label{eq:def-LP-1}
\end{gather}

\begin{thm}
\label{thm:hyperbol-P} Let Hypothesis \textbf{H3} be satisfied. If
$x(\cdot):\mathbb{R}\mapsto\mathcal{D}$ is a solution of System~(\ref{eq:qp-sys-f-P})
such that $\sup_{t\in\mathbb{R}}\left\Vert \dot{x}(t)\right\Vert :=Z<\infty$,
and in addition,
\begin{gather}
\lambda_{f}(\varphi,x)+\frac{\left\langle \nabla U(x),f(\varphi,x)\right\rangle }{2}>\sigma(\varphi,x;Z)\quad\forall(\varphi,x)\in\mathbb{T}^{k}\times\mathrm{cl}(\mathcal{D})\label{eq:lambdaf>sigmaZ}
\end{gather}
where
\begin{gather*}
\sigma(\varphi,x;Z):=\frac{\left(M_{U}(x)M_{P}(\varphi,x)+M_{PU}(\varphi,x)+2L_{P}(\varphi,x)\right)Z}{2}+\frac{M_{P}^{2}(\varphi,x)}{4},
\end{gather*}
 then the solution $x(t)$ is hyperbolic.
\end{thm}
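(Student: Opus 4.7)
The plan is to mimic the Lyapunov-form argument indicated in the remark following the definition of $U$-monotonicity, upgrading it to accommodate the $P$-dependent terms in the variational system associated with (\ref{eq:qp-sys-f-P}). Specifically, I would use the same non-degenerate indefinite quadratic form
\[
Q(t;\eta,\zeta) := \langle \eta,\zeta\rangle + \tfrac12\langle \nabla U(x(t)),\dot x(t)\rangle\|\eta\|^2
\]
on $T_{x(t)}\mathcal M\oplus T_{x(t)}\mathcal M$, and show that its derivative along any solution $(\eta(t),\zeta(t))$ of the variational system is uniformly positive definite. Once this is established, the classical dichotomy criterion (cited in the earlier remark as \cite{Sam2002}), which was already the engine of Theorem \ref{thm:hyperbol}, delivers exponential dichotomy and hence hyperbolicity of $x(\cdot)$.

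First I would write the variational system along $x(t)$ as derived in the excerpt, but with the full right-hand side $\nabla_\tau\zeta = \nabla f\,\eta - R(\eta,\tau)\tau + \nabla P(\eta,\tau) + P\zeta$, and differentiate $Q$. Using $\nabla_\tau\tau = f + P\tau$ from (\ref{eq:qp-sys-f-P}) one gets
\[
\tfrac{d}{dt}\langle \nabla U,\tau\rangle = \langle H_U\tau,\tau\rangle + \langle \nabla U,f\rangle + \langle \nabla U,P\tau\rangle,
\]
and expanding $\tfrac{d}{dt}\langle \eta,\zeta\rangle$ via the variational equations yields the closed form of $dQ/dt$ in terms of $\nabla f,R,\nabla P,P,H_U,\nabla U$. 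I then lower-bound the result term by term: $\langle \nabla f\,\eta,\eta\rangle \ge \lambda_f\|\eta\|^2$; the sectional-curvature bound $\langle R(\eta,\tau)\tau,\eta\rangle \le K\|\eta\|^2\|\tau\|^2$; the identity $\langle H_U\tau,\tau\rangle \ge \mu_U\|\tau\|^2 + \tfrac12\langle \nabla U,\tau\rangle^2$ which comes straight from (\ref{eq:def-mu_U}); hypothesis \textbf{H3} in the form $\mu_U\ge 2K$ to absorb the curvature term; and finally the definitions of $M_P$, $M_{PU}$, $L_P$ and $M_U$ to bound the $P$-perturbations as $|\langle P\zeta,\eta\rangle|\le M_P\|\zeta\|\|\eta\|$, $|\langle \nabla P(\eta,\tau),\eta\rangle|\le L_P\|\eta\|^2\|\tau\|$, $|\langle \nabla U,P\tau\rangle|\le M_{PU}\|\tau\|$.

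After these reductions the estimate collapses to
\[
\tfrac{d}{dt}Q \;\ge\; \bigl\|\zeta+\tfrac12\langle \nabla U,\tau\rangle\eta\bigr\|^2 - M_P\|\zeta\|\|\eta\| + \bigl(\lambda_f + \tfrac12\langle \nabla U,f\rangle - (L_P+\tfrac12 M_{PU})\|\tau\|\bigr)\|\eta\|^2.
\]
The substitution $\tilde\zeta := \zeta + \tfrac12\langle \nabla U,\tau\rangle\eta$ (in which $Q = \langle \eta,\tilde\zeta\rangle$), together with $\|\zeta\|\le\|\tilde\zeta\|+\tfrac12|\langle \nabla U,\tau\rangle|\|\eta\|$ and $|\langle \nabla U,\tau\rangle|\le M_U\|\tau\|\le M_U Z$, turns the right-hand side into a quadratic form in $(\|\tilde\zeta\|,\|\eta\|)$ whose negative-discriminant condition is exactly
\[
\lambda_f + \tfrac12\langle \nabla U,f\rangle \;>\; \tfrac{M_P^2}{4} + \tfrac{M_PM_U + M_{PU} + 2L_P}{2}\|\tau\| \;=\; \sigma(\varphi,x;\|\tau\|).
\]
Since $\|\tau(t)\|\le Z$ and the inequality (\ref{eq:lambdaf>sigmaZ}) is assumed strictly, compactness of $\mathbb T^k\times\mathrm{cl}(\mathcal D)$ gives a uniform lower bound $dQ/dt\ge c(\|\eta\|^2+\|\zeta\|^2)$ with $c>0$, completing the proof.

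The main obstacle is the bookkeeping at the final algebraic step: one must resist applying Cauchy--Schwarz to the cross-term $\langle \nabla U,\tau\rangle\langle \zeta,\eta\rangle$ prematurely, and instead combine it with the term $\tfrac14\langle \nabla U,\tau\rangle^2\|\eta\|^2$ retained from $\langle H_U\tau,\tau\rangle$ into the perfect square $\|\zeta+\tfrac12\langle \nabla U,\tau\rangle\eta\|^2$. Without this, a spurious term quadratic in $\|\tau\|$ of size $\tfrac14 M_U^2\|\tau\|^2$ would survive in the threshold and the resulting sufficient condition would be strictly weaker than the sharp one encoded in $\sigma(\varphi,x;Z)$.
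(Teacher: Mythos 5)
Your proposal is correct and follows essentially the same route as the paper: the same quadratic form $Q$, the same termwise lower bounds via $\lambda_f$, $\mu_U-2K$, $M_P$, $M_{PU}$, $L_P$, $M_U$, the same completion of the square absorbing the cross term $\langle\nabla U,\tau\rangle\langle\eta,\zeta\rangle$ into $\bigl(\|\zeta\|-\tfrac12|\langle\nabla U,\tau\rangle|\,\|\eta\|\bigr)^2$, and the same appeal to the dichotomy criterion of \cite{Sam2002}. The only step you gloss over is the paper's reduction, via the parallel-transport cocycle $\Theta_s^t$, of the variational system on the time-varying spaces $T_{x(t)}\mathcal{M}$ to a genuine linear system on the fixed space $T_{x(0)}\mathcal{M}$ before invoking that criterion; this is routine but should be stated.
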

Finally, let us present the results on the existence of quasiperiodic
solutions.
\begin{thm}
\label{thm:quasiper-sol-f}Let the Hypotheses \textbf{H1} \textendash{}
\textbf{H3} be satisfied, and in addition, suppose that there holds
the inequality

\begin{gather}
\lambda_{II}(x)+\frac{1}{2}\left\langle \nabla U(x),\nu(x)\right\rangle >0\quad\forall x\in\partial\mathcal{D}.\label{eq:II-cond}
\end{gather}
Then the domain $\mathcal{D}$ contains the unique solution $x_{\ast}(\cdot):\mathbb{R}\mapsto\mathcal{D}$
of System~(\ref{eq:qp-sys-f}). This solution is $\omega$-quasiperiodic
and hyperbolic.$ $ \end{thm}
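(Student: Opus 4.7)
The plan is to combine the existence result of Theorem~\ref{thm:Th-Bound-sol-f} (applied with $P\equiv0$) and the hyperbolicity criterion of Theorem~\ref{thm:hyperbol} with a uniqueness argument driven by H3 and~(\ref{eq:II-cond}); $\omega$-quasiperiodicity will then follow from uniqueness via a shift construction, without invoking Amerio's theorem. First, since H1 and H2 hold, Theorem~\ref{thm:Th-Bound-sol-f} applied with $P\equiv0$ produces a solution $x_\ast:\mathbb{R}\to\mathcal{D}$ with $\sup_t\|\dot x_\ast(t)\|\le z_\ast<\infty$; H3 and Theorem~\ref{thm:hyperbol} then yield its hyperbolicity. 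The remaining and principal task is uniqueness.

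For uniqueness I would argue by contradiction. Let $x_1,x_2:\mathbb{R}\to\mathcal{D}$ be two such solutions. For each $t\in\mathbb{R}$ fix a minimizing geodesic $\gamma(\cdot,t):[0,1]\to\mathcal{M}$ from $x_1(t)$ to $x_2(t)$ and introduce the variation fields $\eta(s,t):=\partial_s\gamma$, $\zeta(s,t):=\nabla_{\partial_t\gamma}\eta$. Mimicking the indefinite form from the Remark preceding Theorem~\ref{thm:hyperbol}, set
\begin{gather*}
V(t):=\int_0^1\Bigl[\langle\eta(s,t),\zeta(s,t)\rangle+\tfrac12\langle\nabla U(\gamma(s,t)),\partial_t\gamma(s,t)\rangle\|\eta(s,t)\|^2\Bigr]ds.
\end{gather*}
Differentiating $V$ in $t$ and exploiting $\nabla_\eta\eta\equiv0$, integration by parts in $s$, the torsion-free symmetry $\nabla_{\partial_t\gamma}\eta=\nabla_\eta\partial_t\gamma$, the curvature identity $\nabla_{\partial_t\gamma}\nabla_\eta\partial_t\gamma-\nabla_\eta\nabla_{\partial_t\gamma}\partial_t\gamma=R(\partial_t\gamma,\eta)\partial_t\gamma$, and the fact that $\nabla_{\partial_t\gamma}\partial_t\gamma=f(t\omega,\gamma)$ holds at $s\in\{0,1\}$ only, the computation outlined in that Remark should give, up to a positive quadratic contribution in $\zeta$,
\begin{gather*}
V'(t)\ge\int_0^1\Bigl\{\bigl[\lambda_f(t\omega,\gamma)+\tfrac12\langle\nabla U(\gamma),f(t\omega,\gamma)\rangle\bigr]\|\eta\|^2+\tfrac12\bigl[\mu_U(\gamma)-2K(\gamma)\bigr]\|\eta\|^2\|\partial_t\gamma\|^2\Bigr\}ds,
\end{gather*}
which by~(\ref{eq:lamb_f+nablaU})--(\ref{eq:cond-mu}) and the estimate $K_\sigma\le K$ is bounded below by $c\int_0^1\|\eta(s,t)\|^2ds$ with some $c>0$. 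Since $V(\cdot)$ is uniformly bounded on $\mathbb{R}$ (because $x_1,x_2\in\mathcal{D}$ and $\dot x_1,\dot x_2$ are bounded), integrating the resulting differential inequality forces $\eta\equiv0$, hence $x_1\equiv x_2$.

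The main technical obstacle is to ensure that the connecting geodesic $\gamma(\cdot,t)$ actually remains inside $\mathcal{D}$, so that H3 is applicable pointwise along it; this is precisely where~(\ref{eq:II-cond}) enters. The inequality $\lambda_{II}(x)+\tfrac12\langle\nabla U(x),\nu(x)\rangle>0$ expresses a strict convexity of $\partial\mathcal{D}$ in the geometry modified by $U$ and should, via a maximum-principle argument applied to $s\mapsto U(\gamma(s,t))$ together with a quadratic correction controlled by $\mu_U$, imply $\gamma(\cdot,t)\subset\mathcal{D}$. Once uniqueness is secured, $\omega$-quasiperiodicity is obtained by the shift construction: H1--H3 and~(\ref{eq:II-cond}) hold uniformly in $\varphi\in\mathbb{T}^k$ for the shifted systems $\nabla_{\dot x}\dot x=f(\varphi+t\omega,x)$, so each such system has a unique bounded-in-$\mathcal{D}$ solution $x_\varphi(\cdot)$. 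Setting $u(\varphi):=x_\varphi(0)$, the identity $x_{\varphi+s\omega}(t)=x_\varphi(t+s)$ (forced by uniqueness applied to both sides) gives $x_\ast(t)=u(t\omega)$, while continuity of $u:\mathbb{T}^k\to\mathcal{D}$ follows from continuous dependence of solutions on parameters combined with uniqueness of the bounded-in-$\mathcal{D}$ branch, completing the proof.
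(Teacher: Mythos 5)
Your overall architecture is the paper's: existence from Theorem~\ref{thm:Th-Bound-sol-f}, hyperbolicity from Theorem~\ref{thm:hyperbol}, uniqueness via a monotone indefinite form evaluated along a one-parameter family of curves joining $x_{1}(t)$ to $x_{2}(t)$, and quasiperiodicity from uniqueness by the shift construction (that last part matches the paper essentially verbatim). The gap is in the core of the uniqueness step: you connect $x_{1}(t)$ and $x_{2}(t)$ by a \emph{minimizing Levi-Civita geodesic}, i.e.\ you impose $\nabla_{\eta}\eta\equiv0$, and then assert that the second variation produces the term $\tfrac12\bigl[\mu_{U}-2K\bigr]\|\eta\|^{2}\|\partial_{t}\gamma\|^{2}$. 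It cannot: with $\nabla_{\eta}\eta=0$ the Hessian of $U$ never enters the computation of $\partial_{s}^{2}\tfrac12\|\partial_{t}\gamma\|^{2}$, so you are left with $\|\nabla_{\eta}\partial_{t}\gamma\|^{2}-\langle R(\partial_{t}\gamma,\eta)\eta,\partial_{t}\gamma\rangle$, whose curvature part is uncompensated and can be negative whenever $K>0$ (e.g.\ on $\mathbb{S}^{2}$, the very application in Section~\ref{sec:charged particle}). The paper instead joins the two solutions by curves satisfying $\nabla_{x^{\prime}}x^{\prime}=\tfrac12\|x^{\prime}\|^{2}\nabla U(x)$ (problem~(\ref{eq:sys-x(s,xi)})); it is precisely the $\nabla U$ term in this connecting equation that generates $\langle H_{U}\dot\chi,\dot\chi\rangle$ and, after completing a square, the combination $\mu_{U}-2K\ge0$ of Hypothesis \textbf{H3}. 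Relatedly, condition~(\ref{eq:II-cond}) is the convexity of $\partial\mathcal{D}$ with respect to these modified curves, not with respect to geodesics, and Propositions~\ref{prop:nondegen} and~\ref{prop:connect-map} (non-degeneracy of the exponential-type map, confinement to $\mathcal{D}$, uniform bound $\|\xi(t)\|\le d$, smooth dependence on $t$) are exactly what legitimize the construction; for minimizing geodesics none of this is available — they may leave $\mathcal{D}$, and on positively curved manifolds they develop conjugate points and fail to depend smoothly on the endpoints, so your field $\eta(s,t)$ need not even be differentiable in $t$.

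A secondary but real defect: your functional $V(t)$ integrates the quadratic form over $s\in[0,1]$, so $V^{\prime}(t)$ requires $\nabla_{\partial_{t}\gamma}\partial_{t}\gamma$ at \emph{interior} values of $s$, where (as you yourself note) the equation of motion does not hold; the claimed lower bound involving $\lambda_{f}+\tfrac12\langle\nabla U,f\rangle$ integrated over all $s$ therefore does not follow from your computation. The paper's functional is the boundary difference $S(t)=\langle\chi^{\prime},\dot\chi\rangle\bigl|_{s=0}^{s=1}$: the equation of motion is used only at $s\in\{0,1\}$, and the fundamental theorem of calculus then converts $\dot S$ into $\int_{0}^{1}\bigl[\partial_{s}^{2}\tfrac12\|\dot\chi\|^{2}+\partial_{s}\langle\chi^{\prime},f\rangle\bigr]\mathrm{d}s$, at which point $\lambda_{f}$ enters through $\partial_{s}\langle f,\chi^{\prime}\rangle$. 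To repair your proof you would need to replace the geodesics by the solutions of~(\ref{eq:sys-x(s,xi)}), invoke Propositions~\ref{prop:nondegen}--\ref{prop:connect-map} to justify their existence, confinement and smooth dependence, and replace $V$ by the boundary form $S$.
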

\begin{rem}
Suppose that there exists a noncritical value $c\in U(\mathcal{M})$
such that $\lambda_{U}(x)>0$ in a connected component $\mathcal{\tilde{D}}$
of sub-level set $U^{-1}(-\infty,c)$. Then the inequality~(\ref{eq:II-cond})
is satisfied for all $x\in\partial\tilde{\mathcal{D}}$.
\end{rem}
For the perturbed system. the corresponding statement is as follows.
\begin{thm}
\label{thm:quasiper-sol-f-P}Let the Hypotheses \textbf{H1} \textendash{}
\textbf{H3} be satisfied, and in addition, suppose that there holds
the inequalities~(\ref{eq:II-cond}), (\ref{eq:4lambda-ge}) and
\begin{gather}
\lambda_{f}(\varphi,x)+\frac{\left\langle \nabla U(x),f(\varphi,x)\right\rangle }{2}>\sigma(\varphi,x;z_{\ast})\quad\forall(\varphi,x)\in\mathbb{T}^{k}\times\mathrm{cl}(\mathcal{D})\label{eq:labdaf-g-sigma}
\end{gather}
where $z_{\ast}$ and $\sigma(\cdot,\cdot;\cdot)$ are defined, respectively,
in Theorem~\ref{thm:Th-B-sol-f-P} and Theorem~\ref{thm:hyperbol-P}.
Then the domain $\mathcal{D}$ contains the unique solution $x_{\ast}(\cdot):\mathbb{R}\mapsto\mathcal{D}$
of System~(\ref{eq:qp-sys-f-P}). This solution is $\omega$-quasiperiodic
and hyperbolic.
\end{thm}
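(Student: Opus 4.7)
The plan is to bootstrap the theorem from the three preceding main results, supplemented by a uniqueness argument and a shift-flow continuity argument. Hypotheses \textbf{H1}, \textbf{H2} and (\ref{eq:4lambda-ge}) are precisely those of Theorem~\ref{thm:Th-B-sol-f-P}, which furnishes at least one solution $x_\ast(\cdot):\mathbb{R}\to\mathcal{D}$ of System~(\ref{eq:qp-sys-f-P}) with $\sup_{t\in\mathbb{R}}\|\dot x_\ast(t)\|\le z_\ast$. Since $\sigma(\varphi,x;Z)$ is monotonically increasing in $Z$, the bound $\sup\|\dot x_\ast\|\le z_\ast$ together with (\ref{eq:labdaf-g-sigma}) verifies the hypothesis of Theorem~\ref{thm:hyperbol-P}, so under \textbf{H3} this $x_\ast$ is automatically hyperbolic. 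The remaining work is (i)~uniqueness of bounded solutions contained in $\mathcal{D}$ and (ii)~quasiperiodicity.

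For uniqueness I would argue by contradiction: suppose there are two distinct bounded solutions $x_1,x_2:\mathbb{R}\to\mathcal{D}$, each with bounded derivative (necessarily $\sup\|\dot x_i\|\le z_\ast$ by the Landau-type inequality of Section~\ref{sec:Auxiliary-propositions}). At each $t$ connect them by a length-minimizing geodesic $s\mapsto\gamma(t,s)$, $s\in[0,1]$; condition~(\ref{eq:II-cond}) guarantees that a natural sub-level set of $U$ containing $\mathrm{cl}(\mathcal{D})$ is geodesically convex, so the joining geodesic stays in the region where the $U$-monotonicity estimates are valid. Writing $T=\partial_s\gamma$, $Y=\partial_t\gamma$, I introduce a Lyapunov functional $\Phi(t)$ that is the integrated nonlinear counterpart of the indefinite quadratic form
\begin{gather*}
(\eta,\zeta)\mapsto\langle\eta,\zeta\rangle+\tfrac12\langle\nabla U(x(t)),\dot x(t)\rangle\,\|\eta\|^2
\end{gather*}
appearing in the Remark after the definition of $U$-monotonicity. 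Differentiating twice in $t$, using $\nabla_T T=0$, the commutator $\nabla_T Y=\nabla_Y T$, the curvature identity for $[\nabla_T,\nabla_Y]$, and substituting $\nabla_Y Y|_{s=0,1}=f+P\dot x$ from~(\ref{eq:qp-sys-f-P}), one expects a lower bound of the schematic form
\begin{gather*}
\ddot\Phi(t)\ge\int_0^1\bigl[\mu_U(\gamma)-2K(\gamma)\bigr]\|T\|^2\|Y\|^2\,\mathrm{d}s+\int_0^1 2\!\left[\lambda_f+\tfrac12\langle\nabla U,f\rangle-\sigma(\cdot,\cdot;z_\ast)\right]\|T\|^2\,\mathrm{d}s,
\end{gather*}
strictly positive by~(\ref{eq:cond-mu}) and~(\ref{eq:labdaf-g-sigma}) unless $T\equiv0$. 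But $\Phi$ is bounded on $\mathbb{R}$ (solutions lie in the precompact $\mathrm{cl}(\mathcal{D})$ with bounded velocities and $\nabla U$ bounded on $\mathrm{cl}(\mathcal{D})$); a bounded strictly convex function on $\mathbb{R}$ must be constant, hence $T\equiv0$ and $x_1\equiv x_2$.

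For quasiperiodicity I exploit translation invariance in the phase. For each $\varphi\in\mathbb{T}^k$ the system obtained by replacing $t\omega$ with $\varphi+t\omega$ satisfies the same hypotheses, so it possesses a unique bounded-in-$\mathcal{D}$ solution $y^\varphi(\cdot)$; set $u(\varphi):=y^\varphi(0)$. Uniqueness together with the evident group law $y^{\varphi+s\omega}(\cdot)=y^\varphi(\cdot+s)$ gives $u(t\omega)=x_\ast(t)$. Continuity of $u:\mathbb{T}^k\to\mathcal{M}$ is a compactness argument: along $\varphi_n\to\varphi$ the family $y^{\varphi_n}$ is equicontinuous (derivatives uniformly bounded by $z_\ast$) with values in $\mathrm{cl}(\mathcal{D})$, so by Arzel\`a--Ascoli a subsequence converges uniformly on compacta to a bounded solution of the $\varphi$-system, necessarily $y^\varphi$; hence $u(\varphi_n)\to u(\varphi)$. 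Rational independence of the components of $\omega$ makes $\{t\omega\}_{t\in\mathbb{R}}$ dense in $\mathbb{T}^k$, and $x_\ast(t)=u(t\omega)$ exhibits $x_\ast$ as $\omega$-quasiperiodic.

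The main obstacle is the construction and sharp analysis of the Lyapunov functional $\Phi$. On manifolds with $K(x)>0$ the curvature term from the second variation of geodesic energy competes directly with the positivity furnished by $H_U$, and the endpoint contributions of $\nabla_Y Y=f+P\dot x$ have to be distributed exactly against the two pieces $\mu_U-2K$ and $\lambda_f+\tfrac12\langle\nabla U,f\rangle-\sigma(\cdot,\cdot;z_\ast)$. This is the Riemannian nonlinear analogue of the elementary observation that in Euclidean space $t\mapsto\|x_2(t)-x_1(t)\|^2$ is strictly convex whenever $f$ is monotone; the curvature and $U$-corrections explain why exactly the $U$-monotonicity inequalities~(\ref{eq:lamb_f+nablaU})--(\ref{eq:cond-mu}) together with the strengthened pointwise bound~(\ref{eq:labdaf-g-sigma}) are required.
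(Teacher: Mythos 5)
Your overall architecture matches the paper's: existence from Theorem~\ref{thm:Th-B-sol-f-P}, hyperbolicity from Theorem~\ref{thm:hyperbol-P} (using that $\sigma(\varphi,x;Z)$ is increasing in $Z$ and $Z\le z_{\ast}$), uniqueness via a monotone/convex functional along a one-parameter family of curves joining the two putative solutions, and quasiperiodicity via phase-translation, uniqueness, the group law $x_{\ast}(t+s,\varphi)=x_{\ast}(s,t\omega+\varphi)$, and an Arzel\`a--Ascoli continuity argument for $\varphi\mapsto x_{\ast}(0,\varphi)$. The quasiperiodicity part is essentially identical to the paper's.

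The genuine gap is in the uniqueness step: you join $x_1(t)$ and $x_2(t)$ by length-minimizing \emph{geodesics}. With $\nabla_T T=0$ the second variation of $\tfrac12\|Y\|^2$ in $s$ is
\begin{gather*}
\frac{\partial^{2}}{\partial s^{2}}\frac{\left\Vert Y\right\Vert ^{2}}{2}=\left\Vert \nabla_{T}Y\right\Vert ^{2}+\left\langle \nabla_{Y}\nabla_{T}T,Y\right\rangle -\left\langle R(Y,T)T,Y\right\rangle =\left\Vert \nabla_{T}Y\right\Vert ^{2}-\left\langle R(Y,T)T,Y\right\rangle ,
\end{gather*}
and on a positively curved manifold (the paper's target case, e.g.\ $\mathbb{S}^{2}$ in Section~\ref{sec:charged particle}) this can be negative: there is no source for the Hessian term $\tfrac{\|T\|^{2}}{2}\langle H_{U}Y,Y\rangle$ that must combine with $-K\|T\|^{2}\|Y\|^{2}$ to produce the nonnegative combination $\tfrac{\|T\|^{2}}{2}[\mu_{U}-2K]\|Y\|^{2}$ you invoke. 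That term arises in the paper precisely because the connecting curves are \emph{not} geodesics but solutions of the modified flow~(\ref{eq:sys-x(s,xi)}), $\nabla_{x^{\prime}}x^{\prime}=\tfrac{\|x^{\prime}\|^{2}}{2}\nabla U(x)$, so that $\nabla_{Y}\nabla_{T}T=\langle\nabla_{Y}T,T\rangle\nabla U+\tfrac{\|T\|^{2}}{2}H_{U}Y$. Your schematic lower bound for $\ddot\Phi$ containing $[\mu_{U}-2K]$ therefore cannot be derived from your construction. Relatedly, (\ref{eq:II-cond}) does not assert geodesic convexity of a sublevel set of $U$; in the paper it is used (Proposition~\ref{prop:connect-map}, together with (\ref{eq:cond-mu}) via Proposition~\ref{prop:nondegen}) to show that the curves of the \emph{modified} flow joining points of $\mathrm{cl}(\mathcal{D})$ stay in $\mathcal{D}$, depend smoothly on endpoints (no cut-locus issues, unlike minimizing geodesics), and have initial velocities uniformly bounded by $d$ --- the last fact being what makes the monotone functional $S(t)=\langle\chi^{\prime},\dot\chi\rangle\bigl|_{s=0}^{s=1}$ bounded and closes the contradiction. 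To repair your argument you should replace the geodesics by the family $x(s,\xi(t))$ of Proposition~\ref{prop:connect-map} and work with the paper's first-order monotone quantity $S(t)$ (or justify your $\Phi$ against that flow); as written, the curvature term is uncontrolled and the uniqueness proof fails exactly on the manifolds the theorem is meant to cover.
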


\section{Auxiliary propositions \label{sec:Auxiliary-propositions}}

Propositions~\ref{prop:T=00003Dinfty} \textendash{} \ref{prop:bound-for-dotx-Pert}
below are essential for the proof of Theorems~\ref{thm:Th-Bound-sol-f}
and~\ref{thm:Th-B-sol-f-P}.
\begin{prop}
\label{prop:T=00003Dinfty}Let $\mathcal{D}\subset\mathcal{M}$ be
a bounded domain and let $x(\cdot):(T_{-},T_{+})\mapsto\mathcal{M}$
be a non-extendable solution of System (\ref{eq:qp-sys-f-P}) such
that $x(\cdot):[s,T_{+})\mapsto\mathrm{cl}(\mathcal{D})$ for some
$s\in(T_{-},T_{+})$. Then $T_{+}=\infty$. If $x(\cdot):(T_{-},T_{+})\mapsto\mathrm{cl}(\mathcal{D})$
then $(T_{-},T_{+})=\mathbb{R}$.\end{prop}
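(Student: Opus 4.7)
The plan is to reduce the statement to the standard ODE extension principle on manifolds: a non--extendable integral curve of a smooth vector field on $T\mathcal{M}$ either escapes every compact set or is defined for all time. Since the manifold $\mathcal{M}$ is complete and connected, Hopf--Rinow guarantees that $\mathrm{cl}(\mathcal{D})$ is compact. Thus the smooth functions $M_{f}(\cdot)$ and $\Lambda_{P}(\cdot)$ defined in~(\ref{eq:def-Mf})--(\ref{eq:def-LP}) attain finite maxima $M_{f}^{\ast}$, $\Lambda_{P}^{\ast}$ over $\mathrm{cl}(\mathcal{D})$. The whole issue is then to show that whenever $x(\cdot)$ stays in $\mathrm{cl}(\mathcal{D})$ on some half--interval, the tangential component $\dot{x}(\cdot)$ also stays inside a compact subset of $T\mathcal{M}$ on every bounded subinterval.

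To obtain the velocity bound I would differentiate the kinetic energy $E(t):=\tfrac{1}{2}\|\dot{x}(t)\|^{2}$ along the solution. Using compatibility of $\nabla$ with the metric and substituting~(\ref{eq:qp-sys-f-P}),
\begin{gather*}
\dot{E}(t)=\langle \nabla_{\dot{x}}\dot{x},\dot{x}\rangle =\langle f(t\omega,x),\dot{x}\rangle +\langle P(t\omega,x)\dot{x},\dot{x}\rangle\le M_{f}^{\ast}\|\dot{x}\|+\Lambda_{P}^{\ast}\|\dot{x}\|^{2}.
\end{gather*}
Writing $y(t):=\|\dot{x}(t)\|=\sqrt{2E(t)}$, wherever $y>0$ this yields $\dot{y}\le M_{f}^{\ast}+\Lambda_{P}^{\ast}\,y$, so by a Gronwall--type argument $y(t)$ remains finite on every bounded subinterval of $[s,T_{+})$; explicitly, $y(t)\le y(s)e^{\Lambda_{P}^{\ast}(t-s)}+(M_{f}^{\ast}/\Lambda_{P}^{\ast})(e^{\Lambda_{P}^{\ast}(t-s)}-1)$ (the case $\Lambda_{P}^{\ast}=0$ is handled analogously, and the case $y(s)=0$ by a standard comparison).

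Now suppose, for contradiction, that $T_{+}<\infty$. Then $\{(x(t),\dot{x}(t)):t\in[s,T_{+})\}$ is contained in the set $\{\xi\in T\mathcal{M}:\pi(\xi)\in \mathrm{cl}(\mathcal{D}),\,\|\xi\|\le Y\}$ for some finite $Y$ (the Gronwall bound above), which is a compact subset of $T\mathcal{M}$. But system~(\ref{eq:qp-sys-f-P}) is equivalent to a smooth non--autonomous first--order system on $T\mathcal{M}$ generated by the geodesic spray perturbed by the vector field $\xi\mapsto f(t\omega,\pi(\xi))+P(t\omega,\pi(\xi))\xi$; by the classical extension theorem for smooth non--autonomous ODEs on manifolds, no non--extendable solution can remain in a compact set up to its right endpoint. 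This contradicts the assumption $T_{+}<\infty$ and therefore $T_{+}=\infty$. The analogous argument applied backward in time, with the roles of $T_{+}$ and $T_{-}$ interchanged, delivers $T_{-}=-\infty$ under the hypothesis of the second assertion.

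The only non--routine point in this plan is the precise invocation of the extension theorem for second order ODEs on a Riemannian manifold; it will be done by recasting~(\ref{eq:qp-sys-f-P}) as a first--order ODE on $T\mathcal{M}$ and citing the standard result that a maximal integral curve with relatively compact image in the phase space is defined on all of $\mathbb{R}$. Once that is in place, the energy estimate above, which uses nothing beyond the metric compatibility of $\nabla$ and the compactness of $\mathrm{cl}(\mathcal{D})$, closes the argument.
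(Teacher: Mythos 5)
Your proposal is correct and follows essentially the same route as the paper: differentiate the kinetic energy to obtain a linear differential inequality for the speed, conclude via a Gronwall/comparison argument that $\left\Vert \dot{x}(t)\right\Vert$ stays bounded on bounded subintervals, and then invoke the standard extension principle (a maximal solution of the equivalent first-order system on $T\mathcal{M}$ that remains in a compact subset of phase space cannot have a finite escape time). The only cosmetic difference is that the paper runs the comparison on $\left\Vert \dot{x}\right\Vert ^{2}$ using $2\left\langle \dot{x},f\right\rangle \le\left\Vert \dot{x}\right\Vert ^{2}+M_{f}^{2}$, while you work directly with $\left\Vert \dot{x}\right\Vert$; both are equivalent.
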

\begin{proof}
If we assume that $T_{+}<\infty$, then $\limsup_{t\nearrow T_{+}}\left\Vert \dot{x}(t)\right\Vert =\infty$.
Since
\begin{gather*}
\frac{\mathrm{d}}{\mathrm{d}t}\left\Vert \dot{x}(t)\right\Vert ^{2}=2\left\langle \dot{x},f(t\omega,x)+P(\omega t,x)\dot{x}\right\rangle \Bigl|_{x=x(t)}\le\left[1+2\Lambda_{P}(x(t))\right]\left\Vert \dot{x}(t)\right\Vert ^{2}+M_{f}^{2}(x(t)),
\end{gather*}
then $\left\Vert x(t)\right\Vert ^{2}$ does not exceed the solution
of linear initial problem
\begin{gather*}
\dot{z}=\left[1+2\Lambda_{P}(x(t))\right]z+M_{f}^{2}(x(t)),\quad z(s)=\left\Vert x(s)\right\Vert ^{2}.
\end{gather*}
 Hence $\left\Vert \dot{x}(t)\right\Vert $ is bounded on $[s,T_{+})$
and we arrive at contradiction with our assumption that $T_{+}<+\infty$.

The same arguments can be used to prove that $T_{-}=-\infty$ if $x(\cdot):(-\infty,s]\mapsto\mathrm{cl}(\mathcal{D})$.
\end{proof}
If $\mathcal{M}=\mathbb{R}^{n}$, and the system is $\ddot{x}=f(t\omega,x)$,
then the boundedness of solution on $\mathbb{R}_{+}$ or $\mathbb{R}$
implies the boundedness of its second derivative, respectively, on
$\mathbb{R}_{+}$ or $\mathbb{R}$. In~\cite{Cie03}, the Landau
inequality was used to prove the boundedness of the first derivative
of solution. However, in the case of Riemannian manifold with non-constant
metric tensor, the equation~(\ref{eq:qp-sys-f-P}) written in local
coordinates contains quadratic terms with respect to $\dot{x}$. Thus
the Landau inequality cannot be directly applied to prove the boundedness
of $\dot{x}(\cdot)$. Nevertheless, we have
\begin{prop}
\label{prop:bound-for-dotx-Pert} Suppose that Hypothesis \textbf{H1}
is valid and let $x(\cdot):[s,\infty)\mapsto\mathrm{cl}(\mathcal{D})$
be a solution of System~(\ref{eq:qp-sys-f-P}) such that $\left\Vert \dot{x}(s)\right\Vert \le z_{+}$
(see~(\ref{eq:def-z-pm})). Then $\sup_{t\ge s}\left\Vert \dot{x}(t)\right\Vert \le z_{\ast}$
where $z_{\ast}$ is defined in Theorem~\ref{thm:Th-Bound-sol-f}
if $P(\varphi,x)\equiv0$, and in Theorem~\ref{thm:Th-B-sol-f-P}
otherwise. \end{prop}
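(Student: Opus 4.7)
Set $r(t):=\|\dot x(t)\|$ and introduce the auxiliary scalar $\phi(t):=\langle\nabla U(x(t)),\dot x(t)\rangle$. The plan is to extract one-sided differential inequalities for $\phi$ and $r$ from the equation of motion, combine them into a Lyapunov-type function monotone on $\{r\ge z_+\}$, and close the bound on $\sup_t r(t)$ by a continuity argument.

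First I would differentiate $\phi$ along the equation, obtaining $\dot\phi=\langle H_U(x)\dot x,\dot x\rangle+\langle\nabla U,f\rangle+\langle P^{*}\nabla U,\dot x\rangle$; the defining bounds on $H_U$, on $\langle\nabla U,f\rangle$, and on $P^{*}\nabla U$ (via Cauchy--Schwarz) then yield $\dot\phi\ge\lambda_U(x)(r^{2}-pr-q^{2})$. Analogously, differentiating $r^{2}$ and using $\Lambda_P\le l M_f$ gives $\dot r\le M_f(x)(1+lr)$, while Cauchy--Schwarz supplies the pointwise bound $|\phi(t)|\le C_U r(t)$.

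Next I would set $V(t):=I(r(t))-C_f\phi(t)$ and check that $\dot V\le 0$ on $\{r\ge z_+\}$. There $I'(r)=(r^{2}-pr-q^{2})/(1+lr)\ge 0$, and the chain
\[
I'(r)\dot r\le I'(r)M_f(1+lr)=M_f(r^{2}-pr-q^{2})\le C_f\lambda_U(r^{2}-pr-q^{2})\le C_f\dot\phi
\]
gives $\dot V\le 0$. Hence $V$ is non-increasing on every subinterval on which $r\ge z_+$.

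Finally, if $r(t^{*})>z_+$ for some $t^{*}\ge s$, then since $r(s)\le z_+$ there is a last $t_0\in[s,t^{*})$ with $r(t_0)=z_+$, and on $[t_0,t^{*}]$ the monotonicity of $V$ yields $I(r(t^{*}))\le C_f(\phi(t^{*})-\phi(t_0))$. Bounding both endpoints only by $|\phi|\le C_U r$ delivers the weaker estimate $I(r(t^{*}))\le 2C_f C_U z_+$; to obtain the sharp inequality $I(r(t^{*}))\le C_f C_U z_+$, equivalent to $r(t^{*})\le z_{*}$, I would upgrade to the global bound $|\phi(t)|\le C_U z_+$ valid on $[s,\infty)$. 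This global bound follows from an extremum argument: at any interior local extremum of $\phi$, $\dot\phi=0$ combined with $\dot\phi\ge\lambda_U(r^{2}-pr-q^{2})$ forces $r\le z_+$ and hence $|\phi|\le C_U z_+$ there; any connected component of $\{\phi>C_U z_+\}$ must be an interval on which $\phi$ is strictly increasing (because $\phi>0$ forces $r>z_+$ and hence $\dot\phi>0$), so such a component cannot close on itself and must extend to $+\infty$, but then $\int\phi\,dt=U(x(t))-U(x(s))$ would diverge, contradicting the boundedness of $U$ on $\mathrm{cl}(\mathcal{D})$. The main obstacle is precisely this last refinement: the Lyapunov inequality by itself only gives the factor-of-$2$ estimate, and extracting the sharp constant requires the global $L^{\infty}$ bound on $\phi$ together with a careful treatment of the endpoint values $\phi(t_0),\phi(t^{*})$ of the excursion above $z_+$.
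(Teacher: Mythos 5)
There is a genuine gap, and you have in fact flagged it yourself without closing it. Your function $\phi$ is the paper's $v(t)=\langle\nabla U(x(t)),\dot x(t)\rangle$, your inequality $\dot\phi\ge\lambda_U(r^2-pr-q^2)$ and your global bound $|\phi|\le C_U z_+$ (obtained by the unboundedness-of-$U$ contradiction) coincide with the paper's first step, modulo the slip that ``$\phi>0$ forces $r>z_+$'' should read ``$|\phi|>C_U z_+$ forces $r>z_+$''. But your Lyapunov step uses only the one-sided inequality $\tfrac{\mathrm{d}}{\mathrm{d}t}I(r)\le C_f\dot\phi$, and monotonicity of $V=I(r)-C_f\phi$ on an excursion above $z_+$ can never give more than $I(r(t^\ast))\le C_f\bigl(\phi(t^\ast)-\phi(t_0)\bigr)\le 2C_fC_Uz_+$. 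That bounds $\sup_t\|\dot x(t)\|$ by the root of $I(z)=2C_fC_Uz_+$, which is strictly larger than the $z_\ast$ asserted in the proposition (root of $I(z)=C_fC_Uz_+$; in the unperturbed case this is exactly what produces the polynomial $\zeta^3-3\zeta+2-3m$). Your closing sentence admits that the sharp constant needs ``a careful treatment of the endpoints'', but no such treatment is supplied, so the claimed estimate is not proved.

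The missing idea is the paper's symmetric control of $I$: from $\bigl|\tfrac{\mathrm{d}}{\mathrm{d}t}\|\dot x\|\bigr|\le M_f+\Lambda_P\|\dot x\|$ one gets the two-sided bound $-C_f\dot v\le\tfrac{\mathrm{d}}{\mathrm{d}t}I(\|\dot x(t)\|)\le C_f\dot v$ on $\{\|\dot x\|>z_+\}$, and one must use the structure of a whole connected component $(t_1,t_2)$ of that set: $\|\dot x(t_1)\|=z_+$, and $\|\dot x\|$ returns to $z_+$ at $t_2$, or $\liminf_{t\to\infty}\|\dot x(t)\|=z_+$ when $t_2=\infty$ (this last fact itself requires the same ``otherwise $v\to\infty$'' argument and is absent from your proposal). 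Splitting $\int\dot v$ at an interior time $t$ and choosing $t_\varepsilon\in(t,t_2)$ with $\|\dot x(t_\varepsilon)\|=z_++\varepsilon$, the upward half is estimated from below by $I(\|\dot x(t)\|)-I(z_+)$ and the downward half by $-\bigl(I(z_++\varepsilon)-I(\|\dot x(t)\|)\bigr)$, so that
\begin{gather*}
2C_fC_Uz_+\ \ge\ C_f\bigl(v(t_\varepsilon)-v(t_1)\bigr)\ \ge\ 2I\left(\left\Vert \dot{x}(t)\right\Vert \right)-I(z_++\varepsilon),
\end{gather*}
and letting $\varepsilon\to0$ the factor $2$ cancels, giving $I(\|\dot x(t)\|)\le C_fC_Uz_+$ and hence the stated $z_\ast$. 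Without this ``up-and-down'' accounting over the full excursion, your argument yields only the weaker, factor-of-two bound.
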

\begin{proof}
Define $u(t):=U\left(x(t)\right),\quad v(t):=\dot{u}(t)\equiv\left\langle \nabla U\left(x(t)\right),\dot{x}(t)\right\rangle .$
We have
\begin{gather*}
\dot{v}(t)=\left[\left\langle H_{U}(x)\dot{x},\dot{x}\right\rangle +\left\langle \nabla U(x),f(t\omega,x)+P(t\omega,x)\dot{x}\right\rangle \right]_{x=x(t)}\ge\\
\left[\lambda_{U}(x)\left\Vert \dot{x}\right\Vert ^{2}-\left\Vert P^{\ast}(t\omega,x)\nabla U(x)\right\Vert \left\Vert \dot{x}(t)\right\Vert +\left\langle \nabla U(x),f(t\omega,x)\right\rangle \right]_{x=x(t)}\ge\\
\left[\lambda_{U}(x)\left(\left\Vert \dot{x}\right\Vert ^{2}-p\left\Vert \dot{x}\right\Vert -q^{2}\right)\right]_{x=x(t)}=\left[\lambda_{U}(x)\left(\left\Vert \dot{x}\right\Vert -z_{-}\right)\left(\left\Vert \dot{x}\right\Vert -z_{+}\right)\right]_{x=x(t)}.
\end{gather*}
Let us show that $\left|v(t)\right|\le C_{U}z_{+}$ for all $t\ge s.$
By reasoning ad absurdum, suppose that for some $\delta>0$ the set
\begin{gather*}
\mathcal{T}_{v,\delta}:=\left\{ t\ge s:\left|v(t)\right|>C_{U}(z_{+}+\delta)\right\}
\end{gather*}
is non-empty. Since $\left|v(t)\right|\le C_{U}\left\Vert \dot{x}(t)\right\Vert $,
then $\left\Vert \dot{x}(t)\right\Vert >z_{+}+\delta$ for all $t\in\mathcal{T}_{v}$.
Hence,
\begin{gather*}
\dot{v}(t)\ge\left[\lambda_{U}(x)\left(z_{+}+\delta-\bar{z}_{-}\right)\delta\right]_{x=x(t)}\ge l_{U}\delta^{2}>0\quad\forall t\in\mathcal{T}_{v,\delta}
\end{gather*}
where
\begin{gather}
l_{U}:=\min_{x\in\mathrm{cl}(\mathcal{D})}\lambda_{U}(x).\label{eq:def-lu}
\end{gather}
Thus $v(t)$ does not decrease while $t\in\mathcal{T}_{v,0}$ . Since
$\left|v(s)\right|\le C_{U}\left\Vert \dot{x}(s)\right\Vert \le C_{U}z_{+}$,
then $s\notin\mathcal{T}_{v,\delta}$ and for this reason $v(t)\ge-C_{U}z_{+}$
for all $t\ge s$. On the other hand, if $t_{0}\in\mathcal{T}_{v,\delta}$
then
\begin{gather*}
v(t)\ge v(t_{0})+l_{U}\delta^{2}(t-t_{0})>C_{U}(z_{+}+\delta)
\end{gather*}
while $t>t_{0}$ and $t\in\mathcal{T}_{v,\delta}$. This implies that
$[t_{0},\infty)\subset\mathcal{T}_{v,\delta}$, $v(t)\to\infty$ as
$t\to\infty$, and hence, $u(t)\to\infty$ as $t\to\infty$. We arrive
at contradiction with our assumption that $x(t)\in\mathcal{D}$ for
all $t\ge s$, since $U(\cdot)$ is bounded in $\mathrm{cl}\,\mathcal{D}$
.

Now let us estimate $\left\Vert \dot{x}(t)\right\Vert $. Consider
the nontrivial case where the set
\begin{gather*}
\mathcal{T}:=\left\{ t>s:\left\Vert \dot{x}(t)\right\Vert >z_{+}\right\}
\end{gather*}
 is non-empty. Obviously that any connected component of this set
is an interval $(t_{1},t_{2})$ such that $t_{1}\ge s$, $t_{2}\le+\infty$,
and $\left\Vert \dot{x}(t_{1})\right\Vert =z_{+}$; besides, $\left\Vert \dot{x}(t_{2})\right\Vert =z_{+}$
if $t_{2}<+\infty$, and $ $ $ $$\liminf_{t\to+\infty}\left\Vert \dot{x}(t)\right\Vert =z_{+}$
if $t_{2}=\infty$. In fact, if the last equality were wrong, then
the same arguments as above would lead to unboundedness of $v(t)$.

Since for any $t\in(t_{1},t_{2})$ we have
\begin{gather*}
\left|\frac{\mathrm{d}}{\mathrm{d}t}\left\Vert \dot{x}(t)\right\Vert ^{2}=2\left\langle \dot{x},f(t\omega,x)+P(\omega t,x)\dot{x}\right\rangle \Bigl|_{x=x(t)}\right|\le2\left[M_{f}(x)\left\Vert \dot{x}\right\Vert +\Lambda_{P}(x)\left\Vert \dot{x}\right\Vert ^{2}\right]_{x=x(t)}\quad\Rightarrow\\
\quad\left|\frac{\frac{\mathrm{d}}{\mathrm{d}t}\left\Vert \dot{x}\right\Vert }{M_{f}(x)+\Lambda_{P}(x)\left\Vert \dot{x}\right\Vert }\right|_{x=x(t)}\le1,
\end{gather*}
(see~(\ref{eq:def-Mf}), (\ref{eq:def-LP})), and
\begin{gather*}
\dot{v}(t)\ge\left[\lambda_{U}(x)\left(\left\Vert \dot{x}\right\Vert ^{2}-p\left\Vert \dot{x}\right\Vert -q^{2}\right)\right]_{x=x(t)}>0,
\end{gather*}
then
\begin{gather*}
\left|\frac{\left[\lambda_{U}(x)\left(\left\Vert \dot{x}\right\Vert ^{2}-p\left\Vert \dot{x}\right\Vert -q^{2}\right)\right]\frac{\mathrm{d}}{\mathrm{d}t}\left\Vert \dot{x}\right\Vert }{M_{f}(x)+\Lambda_{P}(x)\left\Vert \dot{x}\right\Vert }\right|_{x=x(t)}\le\dot{v}(t),
\end{gather*}
and finally,
\begin{gather*}
\left|\frac{\left(\left\Vert \dot{x}\right\Vert ^{2}-p\left\Vert \dot{x}\right\Vert -q^{2}\right)\frac{\mathrm{d}}{\mathrm{d}t}\left\Vert \dot{x}\right\Vert }{1+l\left\Vert \dot{x}\right\Vert }\right|_{x=x(t)}\le C_{f}\dot{v}(t)\quad\forall t\in(t_{1},t_{2})
\end{gather*}
(see (\ref{eq:def-Cf-CU}), (\ref{eq:def-l})). This yields
\begin{gather*}
\end{gather*}
\begin{gather*}
-\dot{v}(t)\le\frac{\mathrm{d}}{\mathrm{d}t}I\left(\left\Vert \dot{x}(t)\right\Vert \right)\le\dot{v}(t)\quad\forall t\in(t_{1},t_{2}).
\end{gather*}
 Then for any $t\in(t_{1},t_{2})$ and for any sufficiently small
$\varepsilon>0$ there exists $t_{\varepsilon}\in(t,t_{2})$ such
that $\left\Vert \dot{x}(t_{\varepsilon})\right\Vert =z_{+}+\varepsilon$.
Now

\begin{gather*}
2C_{f}C_{U}z_{+}+\ge v(t_{\varepsilon})-v(t_{1})=\intop_{t_{1}}^{t}\dot{v}(s)\mathrm{d}s+\intop_{t}^{t_{\varepsilon}}\dot{v}(s)\mathrm{d}s\ge\\
\ge\intop_{t_{1}}^{t}\frac{\mathrm{d}}{\mathrm{d}s}I\left(\left\Vert \dot{x}(s)\right\Vert \right)\mathrm{d}s-\intop_{t}^{t_{\varepsilon}}\frac{\mathrm{d}}{\mathrm{d}s}I\left(\left\Vert \dot{x}(s)\right\Vert \right)\mathrm{d}s\ge2I\left(\left\Vert \dot{x}(t)\right\Vert \right)-I(z_{+}+\varepsilon).
\end{gather*}
Letting $\varepsilon\to+0$ we obtain $I\left(\left\Vert \dot{x}(t)\right\Vert \right)\le C_{f}C_{U}z_{+}$
for all $t\in(t_{1},t_{2})$. Since $I(z_{+})=0$ and $I(z)$ monotonically
tends to $+\infty$ on $[z_{+},+\infty)$, then there exists a unique
$z_{\ast}>z_{+}$ such that $I(z_{\ast})=C_{f}C_{U}z_{+}$. This implies
the required estimate for $\left\Vert \dot{x}(t)\right\Vert $.

In the case where $P(\varphi,x)\equiv0$, we have $l=0$, $p=0$,
$z_{+}=-z_{-}=q$, and
\begin{gather*}
I(z)=\frac{z^{3}}{3}-q^{2}z+\frac{2q^{3}}{3}.
\end{gather*}
Hence, $z_{\ast}$ is a solution of equation
\begin{gather*}
z^{3}-3q^{2}z+2q^{3}=3C_{f}C_{U}q.
\end{gather*}
After the substitution $z=q\zeta$, $m=C_{f}C_{U}/q^{2}$ we obtain
the equation
\begin{gather*}
\quad\zeta^{3}-3\zeta+2=3m.
\end{gather*}
 \end{proof}
\begin{rem}
\label{rem:C-le-0}If $P(\varphi,x)\equiv0$ and $\left\langle \nabla U(x),f(\varphi,x)\right\rangle $
is non-negative in $\mathbb{T}^{k}\times\mathrm{cl}(\mathcal{D})$,
then from the inequality $\dot{v}(t)\ge l_{U}\left\Vert \dot{x}\right\Vert ^{2}\ge v(t)/C_{U}$
it follows that System~(\ref{eq:qp-sys-f})does not have non-constant
solutions $x(\cdot):[s,\infty)\mapsto\mathrm{cl}(\mathcal{D})$.
\end{rem}
Let $U(\cdot)\in\mathcal{F}$. $ $Consider the initial value problem

\begin{gather}
\nabla_{x^{\prime}}x^{\prime}=\frac{\left\Vert x^{\prime}\right\Vert ^{2}}{2}\nabla U(x),\quad x(0)=x_{0}:=\pi(\xi),\quad x^{\prime}(0)=\xi\in T\mathcal{M}\quad\left(x^{\prime}=\frac{\mathrm{d}x}{\mathrm{d}s}\right).\label{eq:sys-x(s,xi)}
\end{gather}
Propositions~\ref{prop:nondegen} and \ref{prop:connect-map} below
are essentially exploited in the proof of Theorems~\ref{thm:quasiper-sol-f}
and \ref{thm:quasiper-sol-f-P}.
\begin{prop}
\label{prop:nondegen} Let $\mathcal{D}$ be a domain in $\mathcal{M}$,
$x_{0}\in\mathcal{D}$, $\xi_{0}\in T_{x_{0}}\mathcal{M}$, and let
$x(\cdot,\xi):[0,1]\mapsto\mathrm{cl}(\mathcal{D})$ be the family
of solutions to initial value problem~(\ref{eq:sys-x(s,xi)}) with
parameter $\xi$ ranging in a neighborhood of vector $\xi_{0}$. Suppose
that the function $U(\cdot)$ satisfies in $\mathcal{D}$ the inequality~(\ref{eq:cond-mu}).
Then the derivative of $x(s,\cdot)$ along $T_{x_{0}}\mathcal{M}$,
\begin{gather*}
\frac{\partial}{\partial\xi}\Bigl|_{\xi=\xi_{0}}x(s,\xi):T_{\xi_{0}}\left(T_{x_{0}}\mathcal{M}\right)\cong T_{x_{0}}\mathcal{M}\mapsto T_{x(s,\xi_{0})}\mathcal{M},
\end{gather*}
 is non-degenerate for all $s\in(0,1]$%
\footnote{Recall the well-known fact from the theory of ODE (see, e.g. \cite{Har64}):
if we denote by $I(\xi)$ the interval of existence for non-extendable
solution to (\ref{eq:sys-x(s,xi)}), then the set $\mathcal{E}:=\left\{ (s,\xi):\xi\in T\mathcal{M},s\in I(\xi)\right\} $
is open in $\mathbb{R}\times T\mathcal{M}$ and the mapping $x(\cdot,\cdot):\mathcal{E}\mapsto\mathcal{M}$
is smooth. \label{fn:1}%
}\end{prop}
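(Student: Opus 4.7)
The plan is to deduce a Jacobi-type equation for the variation field
\[
\eta(s):=\frac{\partial}{\partial\xi}\Bigl|_{\xi=\xi_{0}}x(s,\xi)\,\delta\xi,
\]
along the curve $x(\cdot,\xi_{0})$ with velocity $\tau(s):=x'(s,\xi_{0})$, and then to show that, under hypothesis~(\ref{eq:cond-mu}), the function $\phi(s):=\left\Vert \eta(s)\right\Vert ^{2}$ is convex on $[0,1]$ with $\phi(0)=\phi'(0)=0$ and $\phi''(0)=2\left\Vert \delta\xi\right\Vert ^{2}>0$. This forces $\phi(s)>0$ for $s\in(0,1]$ whenever $\delta\xi\neq0$, which gives the non-degeneracy of $\partial x(s,\cdot)/\partial\xi$ at $\xi_{0}$ for $s\in(0,1]$.

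First, to derive the Jacobi equation, I would differentiate the ODE~(\ref{eq:sys-x(s,xi)}) along the parameter $\xi$. Using $\nabla_{\eta}\tau=\nabla_{\tau}\eta$ and the curvature identity $\nabla_{\eta}\nabla_{\tau}\tau-\nabla_{\tau}\nabla_{\eta}\tau=R(\eta,\tau)\tau$, together with $\eta(\left\Vert \tau\right\Vert ^{2}/2)=\left\langle \nabla_{\tau}\eta,\tau\right\rangle $ and $\nabla_{\eta}\nabla U=H_{U}\eta$, I obtain
\[
\nabla_{\tau}^{2}\eta=\left\langle \nabla_{\tau}\eta,\tau\right\rangle \nabla U(x)+\frac{\left\Vert \tau\right\Vert ^{2}}{2}H_{U}(x)\eta-R(\eta,\tau)\tau,
\]
with initial data $\eta(0)=0$ and $(\nabla_{\tau}\eta)(0)=\delta\xi$ (the latter because $x(0,\xi)\equiv x_{0}$ and $x'(0,\xi)=\xi$).

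The heart of the argument is the estimate of $\phi''$. Direct computation gives
\[
\phi''=2\left\Vert \nabla_{\tau}\eta\right\Vert ^{2}+2\left\langle \nabla_{\tau}\eta,\tau\right\rangle \left\langle \nabla U,\eta\right\rangle +\left\Vert \tau\right\Vert ^{2}\left\langle H_{U}\eta,\eta\right\rangle -2\left\langle R(\eta,\tau)\tau,\eta\right\rangle .
\]
Using $\left\langle R(\eta,\tau)\tau,\eta\right\rangle \le K(x)\left\Vert \tau\right\Vert ^{2}\left\Vert \eta\right\Vert ^{2}$, the (rescaled) defining inequality $\left\langle H_{U}\eta,\eta\right\rangle \ge\mu_{U}(x)\left\Vert \eta\right\Vert ^{2}+\left\langle \nabla U,\eta\right\rangle ^{2}/2$ from~(\ref{eq:def-mu_U}), and the hypothesis $\mu_{U}\ge2K$, the terms involving $K$ cancel and I am left with
\[
\phi''\ge2\left\Vert \nabla_{\tau}\eta\right\Vert ^{2}+2\left\langle \nabla_{\tau}\eta,\tau\right\rangle \left\langle \nabla U,\eta\right\rangle +\frac{\left\Vert \tau\right\Vert ^{2}\left\langle \nabla U,\eta\right\rangle ^{2}}{2}.
\]
Applying Cauchy--Schwarz $\left\Vert \nabla_{\tau}\eta\right\Vert ^{2}\left\Vert \tau\right\Vert ^{2}\ge\left\langle \nabla_{\tau}\eta,\tau\right\rangle ^{2}$ (for $\tau\neq0$; the case $\tau=0$ is trivial since the expression reduces to $2\left\Vert \nabla_{\tau}\eta\right\Vert ^{2}\ge0$), the right-hand side becomes a perfect square
\[
2\left(\frac{\left\langle \nabla_{\tau}\eta,\tau\right\rangle }{\left\Vert \tau\right\Vert }+\frac{\left\Vert \tau\right\Vert \left\langle \nabla U,\eta\right\rangle }{2}\right)^{2}\ge0.
\]
This algebraic reduction (absorbing the $\nabla U$-driven cross term through the precise definition of $\mu_{U}$) is what makes the condition~(\ref{eq:cond-mu}) exactly sharp; I expect it to be the most delicate step.

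Finally, $\phi''\ge0$ on $[0,1]$ means $\phi'$ is non-decreasing. Since $\eta(0)=0$ yields $\phi(0)=\phi'(0)=0$, while $\phi''(0)=2\left\Vert (\nabla_{\tau}\eta)(0)\right\Vert ^{2}=2\left\Vert \delta\xi\right\Vert ^{2}>0$, Taylor's formula gives $\phi(s_{0})>0$ for some small $s_{0}>0$, hence $\phi'(s_{0})>0$ by the mean value theorem; monotonicity of $\phi'$ then implies $\phi'(s)\ge\phi'(s_{0})>0$ and $\phi(s)\ge\phi(s_{0})>0$ for every $s\in[s_{0},1]$. Thus $\eta(s)\neq0$ throughout $(0,1]$ whenever $\delta\xi\neq0$, so $\partial x(s,\xi)/\partial\xi|_{\xi_{0}}$ is injective, hence non-degenerate, as an endomorphism of the $m$-dimensional tangent space $T_{x_{0}}\mathcal{M}$.
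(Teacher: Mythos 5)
Your proposal is correct and follows essentially the same route as the paper: differentiate the initial value problem to get the Jacobi-type variational equation for $\eta$, show via the definition of $\mu_{U}$ and the hypothesis $\mu_{U}\ge2K$ that $\left\Vert \eta(s)\right\Vert ^{2}$ is convex (the paper completes the square in $\left\Vert \zeta\right\Vert$ after bounding $\left\langle \zeta,\tau\right\rangle \ge-\left\Vert \zeta\right\Vert \left\Vert \tau\right\Vert$, while you use Cauchy--Schwarz on $\left\langle \nabla_{\tau}\eta,\tau\right\rangle$; these are interchangeable), and conclude from $\eta(0)=0$, $\nabla_{\tau}\eta(0)=\delta\xi\ne0$ that $\eta(s)\ne0$ on $(0,1]$. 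No substantive difference.
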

\begin{proof}
For a given smooth curve $\xi(\cdot):(-\sigma,\sigma)\mapsto T_{x_{0}}\mathcal{M}$,
where $\sigma>0$ and $\xi(0)=\xi_{0}$, construct the mapping
\begin{gather*}
x(\cdot,\xi(\cdot)):I\times(-\sigma,\sigma)\mapsto\mathcal{M}.
\end{gather*}
 Define the following two tangent vector fields along this mapping
\begin{gather*}
Y(s,r):=\frac{\partial x(s,\xi(r))}{\partial r},\quad Z(s,r)=\frac{\partial x(s,\xi(r))}{\partial s}.
\end{gather*}
Then $\nabla_{Y}Z=\nabla_{Z}Y$ and
\begin{gather*}
\nabla_{Y}\nabla_{Z}Z-\nabla_{Z}\nabla_{Y}Z=R(Y,Z)Z.
\end{gather*}
Since
\begin{gather*}
\nabla_{Y}\nabla_{Z}Z=\nabla_{Y}\left(\frac{\left\Vert Z\right\Vert ^{2}}{2}\nabla U(x)\right),\intertext{then}\nabla_{Z}^{2}Y=\left\langle \nabla_{Y}Z,Z\right\rangle \nabla U(x)+\frac{\left\Vert Z\right\Vert ^{2}}{2}H_{U}(x)Y-R(Y,Z)Z.
\end{gather*}
Put here $r=0$ and denote $\bar{x}(s):=x(s,\xi_{0})$, $\tau(s):=Z(s,0)\equiv\bar{x}^{\prime}(s)$.
We see that the vector fields
\begin{gather*}
\eta(s):=Y(s,0),\quad\zeta(s)=\nabla_{\tau}\eta(s),
\end{gather*}
 along the mapping $\bar{x}(\cdot)$ satisfy the first order system
in variations:
\begin{gather*}
\nabla_{\tau}\eta=\zeta,\\
\nabla_{\tau}\zeta=\left\langle \zeta,\tau\right\rangle \nabla U+\frac{\left\Vert \tau\right\Vert ^{2}}{2}H_{U}(\bar{x})\eta-R(\eta,\tau)\tau
\end{gather*}
We have to show that $\eta(s)\ne0$ for all $s\in(0,1]$ once $\xi^{\prime}(0)\ne0$.
Since
\begin{gather*}
\frac{\mathrm{d}}{\mathrm{d}s}\frac{\left\Vert \eta\right\Vert ^{2}}{2}=\left\langle \nabla_{\tau}\eta,\eta\right\rangle =\left\langle \eta,\zeta\right\rangle
\end{gather*}
then
\begin{gather*}
\frac{\mathrm{d}^{2}}{\mathrm{d}s^{2}}\frac{\left\Vert \eta\right\Vert ^{2}}{2}=\frac{\mathrm{d}}{\mathrm{d}s}\left\langle \eta,\zeta\right\rangle =\left\langle \zeta,\zeta\right\rangle +\left\langle \eta,\left\langle \zeta,\tau\right\rangle \nabla U(\bar{x})+\frac{\left\Vert \tau\right\Vert ^{2}}{2}H_{U}(\bar{x})\eta-R(\eta,\tau)\tau\right\rangle =\\
\left\Vert \zeta\right\Vert ^{2}+\left\langle \nabla U(\bar{x}),\eta\right\rangle \left\langle \zeta,\tau\right\rangle +\frac{\left\Vert \tau\right\Vert ^{2}}{2}\left\langle H_{U}(\bar{x})\eta,\eta\right\rangle -K_{\sigma(\eta,\tau)}(\bar{x})\left[\left\Vert \eta\right\Vert ^{2}\left\Vert \tau\right\Vert ^{2}-\left\langle \eta,\tau\right\rangle ^{2}\right]\ge\\
\left\Vert \zeta\right\Vert ^{2}-\left\Vert \zeta\right\Vert \left\Vert \tau\right\Vert \left\langle \nabla U(\bar{x}),\eta\right\rangle +\left\Vert \tau\right\Vert ^{2}\left[\frac{\left\langle H_{U}(\bar{x})\eta,\eta\right\rangle }{2}-K(\bar{x})\left\Vert \eta\right\Vert ^{2}\right].
\end{gather*}
The condition~(\ref{eq:cond-mu}) yields  that $\frac{\mathrm{d}^{2}}{\mathrm{d}s^{2}}\left\Vert \eta(s)\right\Vert ^{2}\ge0$.

Since
\begin{gather*}
x(0,\xi(r))=x_{0},\quad\frac{\partial}{\partial s}\Bigl|_{s=0}x(s,\xi(r))=\xi(r),
\end{gather*}
then $\eta(0)=0$. From this it follow that the horizontal component
of vector $\eta^{\prime}(0)$ (with respect to the Levi-Civita connection)
vanishes and we otain
\begin{gather*}
\quad\zeta(0)=\nabla_{\tau}\eta\bigl|_{s=0}=\frac{\partial}{\partial s}\Bigl|_{s=0}\frac{\partial}{\partial r}\Bigl|_{r=0}x(s,\xi(r))+O(\left\Vert \eta\right\Vert )=\xi^{\prime}(0).
\end{gather*}
 Hence, if $\xi_{0}^{\prime}:=\xi^{\prime}(0)\ne0$, then
\begin{gather*}
\left\Vert \eta(0)\right\Vert ^{2}=0,\quad\frac{\mathrm{d}}{\mathrm{d}s}\Bigl|_{s=0}\left\Vert \eta(s)\right\Vert ^{2}=0,\quad\frac{\mathrm{d}^{2}}{\mathrm{d}s^{2}}\Bigl|_{s=0}\left\Vert \eta(s)\right\Vert ^{2}=2\left\Vert \zeta(0)\right\Vert ^{2}=2\left\Vert \xi_{0}^{\prime}\right\Vert ^{2}>0.
\end{gather*}
This implies that $\left\Vert \eta(s)\right\Vert >0$ for all $s\in(0,1]$.\end{proof}
\begin{prop}
\label{prop:connect-map}Suppose that a function $U(\cdot)\in\mathcal{F}$
in a bounded domain $\mathcal{D}$ with smooth boundary satisfies
the inequalities~(\ref{eq:cond-mu}), (\ref{eq:II-cond}), and let
$x(\cdot,\xi)$ be the solution of initial value problem (\ref{eq:sys-x(s,xi)}).
Then for any $\left\{ x_{0},x_{1}\right\} \subset\mathrm{cl}(\mathcal{D})$
there exists $\xi(x_{0},x_{1})\in T_{x_{0}}\mathcal{M}$ such that
$x(s,\xi(x_{0},x_{1}))\in\mathcal{D}$ for all $s\in(0,1)$, and $x(1,\xi(x_{0,}x_{1}))=x_{1}$.
Moreover,
\begin{gather*}
\left\Vert \xi(x_{0},x_{1})\right\Vert \le d\quad\forall\left\{ x_{0},x_{1}\right\} \subset\mathrm{cl}(\mathcal{D})
\end{gather*}
 where
\begin{gather*}
d:=\frac{C_{U}\mathrm{e}^{U^{\ast}-U_{\ast}}+\sqrt{\left(C_{U}\mathrm{e}^{U^{\ast}-U_{\ast}}\right)^{2}+2l_{U}\mathrm{e}^{U^{\ast}-U_{\ast}}\left(U^{\ast}-U_{\ast}\right)}}{l_{U}},
\end{gather*}
\begin{gather}
U_{\ast}=\min\left\{ U(x):x\in\mathrm{cl}(D)\right\} ,\quad U^{\ast}=\max\left\{ U(x):x\in\mathrm{cl}(D)\right\} .\label{eq:U^*-U_*}
\end{gather}
\end{prop}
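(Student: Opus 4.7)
The plan is to combine a sharp a priori estimate on $\|\xi\|$ (which also yields compactness of the admissible set of initial vectors) with a Wa\.zewski-type topological argument showing that the ``exponential-type'' map $F:\xi\mapsto x(1,\xi)$ covers all of $\mathrm{cl}(\mathcal{D})$.

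\emph{A priori bound.} Differentiating $\|x'\|^2$ and substituting~(\ref{eq:sys-x(s,xi)}) yields the conservation law $\|x'(s)\|^2=\|\xi\|^2 e^{U(x(s))-U(x_0)}$. Set $W(s):=U(x(s,\xi))$ and assume $x(\cdot,\xi)([0,1])\subset\mathrm{cl}(\mathcal{D})$. Hypothesis~(\ref{eq:cond-mu}) gives $\lambda_U(x)\ge 2K(x)\ge 0$ (combine the definitions of $\lambda_U$ and $\mu_U$), hence $\langle H_U(x)x',x'\rangle\ge l_U\|x'\|^2$, and so
\begin{gather*}
W''(s)=\langle H_U(x)x',x'\rangle+\tfrac{\|x'\|^2}{2}\|\nabla U(x)\|^2\ge l_U\|x'\|^2\ge l_U e^{U_*-U^*}\|\xi\|^2.
\end{gather*}
Integration by parts in the identity $\int_0^1(1-s)W''(s)\,ds=W(1)-W(0)-W'(0)$, combined with $|W(1)-W(0)|\le U^*-U_*$ and $|W'(0)|\le C_U\|\xi\|$, delivers a quadratic inequality in $\|\xi\|$ whose positive root is precisely $d$.

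\emph{Non-exit lemma.} For every $\xi\ne 0$ with $x(\cdot,\xi)([0,1])\subset\mathrm{cl}(\mathcal{D})$ I would show $x(s,\xi)\in\mathcal{D}$ for $s\in(0,1)$. Take a signed distance $h$ to $\partial\mathcal{D}$ with $h>0$ in $\mathcal{D}$ and $\nabla h|_{\partial\mathcal{D}}=-\nu$; if $x(s_0)\in\partial\mathcal{D}$ for some $s_0\in(0,1)$, then $\phi:=h\circ x$ has a local minimum at $s_0$, so $\phi'(s_0)=0$ (forcing $x'(s_0)\in T_{x(s_0)}\partial\mathcal{D}$) and $\phi''(s_0)\ge 0$. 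Direct computation yields
\begin{gather*}
\phi''(s_0)=\langle H_h x',x'\rangle+\tfrac{\|x'\|^2}{2}\langle\nabla h,\nabla U\rangle\le -\|x'(s_0)\|^2\bigl[\lambda_{II}(x(s_0))+\tfrac{1}{2}\langle\nu,\nabla U\rangle\bigr]<0
\end{gather*}
by~(\ref{eq:II-cond}), since $\|x'(s_0)\|>0$ by the conservation law and $\xi\ne 0$; this is a contradiction.

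\emph{Topological argument and main obstacle.} Let $\Omega:=\{\xi\in T_{x_0}\mathcal{M}:x(\cdot,\xi)([0,1])\subset\mathrm{cl}(\mathcal{D})\}$. The a priori bound gives $\Omega\subset\overline{B_d(0)}$, and continuous dependence together with compactness of $\mathrm{cl}(\mathcal{D})$ make $\Omega$ closed, hence compact. Proposition~\ref{prop:nondegen} shows $DF(\xi)$ is a linear isomorphism for every $\xi\in\Omega$ (at $\xi=0$ this follows directly from a normal-coordinate expansion), so $F$ is a local diffeomorphism there. The non-exit lemma implies every $\xi\in\Omega\setminus\{0\}$ with $F(\xi)\in\mathcal{D}$ lies in the interior of $\Omega$; hence $F(\Omega)\cap\mathcal{D}$ is both open and closed in $\mathcal{D}$, so by connectedness of $\mathcal{D}$ and non-emptiness (small inward-pointing $\xi$ produce trajectories in $\mathcal{D}$), $F(\Omega)\supset\mathcal{D}$. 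For $x_1\in\partial\mathcal{D}$ one approximates from inside and extracts a convergent subsequence in the compact set $\Omega$. The trickiest point is this clopen argument and, in particular, the case $x_0\in\partial\mathcal{D}$, where $\Omega$ near the origin is only an inward half-space rather than a full neighborhood; the pivotal ingredient is the non-exit step, which is the unique place where hypothesis~(\ref{eq:II-cond}) is used.
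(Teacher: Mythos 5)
Your proposal is correct in substance and follows essentially the same route as the paper: the conservation law $\left\Vert x^{\prime}(s,\xi)\right\Vert ^{2}=\left\Vert \xi\right\Vert ^{2}\mathrm{e}^{U(x(s,\xi))-U(x_{0})}$ together with the second-derivative (Taylor) estimate for $U\circ x$ yields the quadratic inequality whose positive root is $d$; Proposition~\ref{prop:nondegen} makes $\xi\mapsto x(1,\xi)$ a local diffeomorphism; a connectedness (open-and-closed) argument gives surjectivity onto $\mathcal{D}$; and your second-order tangency contradiction at $\partial\mathcal{D}$, driven by~(\ref{eq:II-cond}), is exactly the computation the paper performs with a local defining function $G$ in place of your signed distance. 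The only organisational difference is that you work with the closed set $\Omega$ (trajectories in $\mathrm{cl}(\mathcal{D})$), so compactness of $\Omega$ handles closedness of the image for free and the tangency argument is spent on interiority/openness, whereas the paper works with the open set $\Xi$ (trajectories in $\mathcal{D}$) and uses the tangency argument in the closedness step; the two arrangements are equivalent. Two small remarks. First, your sign in the conservation law is the correct one (the exponent in~(\ref{eq:cons-low}) is reversed in the paper), but since only the two-sided bound $\mathrm{e}^{U_{\ast}-U^{\ast}}\le\cdot\le\mathrm{e}^{U^{\ast}-U_{\ast}}$ is used, $d$ is unaffected; also note that the finiteness of $d$ requires $l_{U}>0$, which comes from~(\ref{eq:cond_U}) (invoked explicitly in the paper's proof) and not from~(\ref{eq:cond-mu}) alone, which only gives $\lambda_{U}\ge0$. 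Second, the point you flag but do not close --- the basepoint $x_{0}\in\partial\mathcal{D}$ --- is resolved in the paper exactly as you treat $x_{1}\in\partial\mathcal{D}$: prove the statement for interior endpoints, approximate both $x_{0}$ and $x_{1}$ from inside $\mathcal{D}$, extract a convergent subsequence of the uniformly $d$-bounded initial vectors, and then reapply the tangency argument to the limit trajectory to exclude contact with $\partial\mathcal{D}$ for $s\in(0,1)$; you should adopt this rather than trying to run the clopen argument directly with the basepoint on the boundary, where $\Omega$ fails to be a neighbourhood of the relevant $\xi$.
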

\begin{proof}
Let us fix $x_{0}\in\mathcal{D}$ arbitrarily and define the set
\begin{gather*}
\Xi=\left\{ \xi\in T_{x_{0}}\mathcal{M}:x(s,\xi)\in\mathcal{D}\;\forall s\in[0,1]\right\} .
\end{gather*}
This set is non-empty and open in $T_{x_{0}}\mathcal{M}$. In fact,
$0\in\Xi$ and if $\xi_{0}\in\Xi$ then for all $\xi\in T_{x_{0}}\mathcal{M}$
sufficiently close to $\xi_{0}$ the solution $x(s,\xi)$ is defined
on $\text{[0,1]}$ and takes values in $\mathcal{D}$ (see the footnote
\vref{fn:1}). This means that a small neighborhood of $\xi_{0}\in\Xi$
is contained in $\Xi$. By Proposition~\ref{prop:nondegen} the mapping
\begin{gather*}
X(\cdot):=x(1,\cdot):\Xi\mapsto\mathcal{D}
\end{gather*}
 has non-degenerate derivative $X^{\prime}(\xi)$ at each point $\xi\in\Xi$.
Hence, this mapping is a local diffeomorphism and for this reason
the set $\mathcal{X}:=X\left(\Xi\right)$ is an open subset of $\mathcal{D}$.

To show that $\mathcal{X}=\mathcal{D}$ it remains to prove that the
set $\mathcal{X}$ is closed in $\mathcal{D}$. If we suppose the
opposite to be true, then there exists a sequence $\left\{ x_{k}\right\} \subset\mathcal{X}$
convergent to $x_{\ast}\in\mathcal{D}\setminus\mathcal{X}$. By the
definition of $\mathcal{X}$, there also exists a sequence $\left\{ \xi_{k}\right\} \subset\Xi$
such that $x_{k}=x(1,\xi_{k})$.

Let us show that the sequence $\left\{ \xi_{k}\right\} $ is bounded.
Observe that for any $\xi\in\Xi$ and $s\in[0,1]$ we have
\begin{gather*}
\left[\frac{\mathrm{d}}{\mathrm{d}s}\left(\left\Vert x^{\prime}\right\Vert ^{2}\mathrm{e}^{-U(x)}\right)=\left\langle x^{\prime},\nabla U(x)\right\rangle \left\Vert x^{\prime}\right\Vert ^{2}-\left\Vert x^{\prime}\right\Vert ^{2}\left\langle x^{\prime},\nabla U(x)\right\rangle =0\right]_{x=x(s,\xi)}
\end{gather*}
Hence,
\begin{gather}
\left\Vert x^{\prime}(s,\xi)\right\Vert ^{2}=\left\Vert \xi\right\Vert ^{2}\exp\left(U(x_{0})-U(x(s,\xi))\right).\label{eq:cons-low}
\end{gather}
Since
\begin{gather*}
\left[\frac{\mathrm{d}^{2}}{\mathrm{d}s^{2}}U(x)=\left\langle H_{U}(x)x^{\prime},x^{\prime}\right\rangle +\left\langle \nabla U(x),\nabla_{x^{\prime}}x^{\prime}\right\rangle =\left\langle H_{U}(x)x^{\prime},x^{\prime}\right\rangle +\frac{\left\Vert x^{\prime}\right\Vert ^{2}}{2}\left\Vert \nabla U(x)\right\Vert ^{2}\right]_{x=x(s,\xi)},
\end{gather*}
then by the Taylor formula there exists $\theta_{k}\in(0,1)$ such
that
\begin{gather*}
U(x_{k})=U(x_{0})+\left\langle \nabla U(x_{0}),\xi_{k}\right\rangle +\frac{1}{2}\left[\left\langle H_{U}(x)x^{\prime},x^{\prime}\right\rangle +\frac{\left\Vert x^{\prime}\right\Vert ^{2}}{2}\left\Vert \nabla U(x)\right\Vert ^{2}\right]_{x=x(\theta_{k},\xi_{k})}.
\end{gather*}
The condition~(\ref{eq:cond_U}) yields
\begin{gather*}
U(x_{k})\ge U(x_{0})-\left|\left\langle \nabla U(x_{0}),\xi_{k}\right\rangle \right|+\frac{l_{U}}{2}\left\Vert \xi_{k}\right\Vert ^{2}\exp\left(U(x_{0})-U(x(\theta_{k},\xi_{k}))\right)
\end{gather*}
(see~(\ref{eq:def-lu})). Now obviously the sequence $\left\{ \left\Vert \xi_{k}\right\Vert \right\} $
is bounded and without loss of generality, one can regard that $\xi_{k}\to\xi_{\ast}\in T_{x_{0}}\mathcal{M}\setminus\Xi$.
Since $x_{\ast}=x(1,\xi_{\ast})\in\mathcal{D}$, then there is $s_{\ast}\in(0,1)$
such that $x(s,\xi_{\ast})\in\mathcal{D}$ for all $s\in(0,s_{\ast})$
but $y_{\ast}:=x(s_{\ast},\xi_{\ast})\in\partial\mathcal{D}$.

The boundary $\partial\mathcal{D}$ near $y_{\ast}$ can be defined
by zero-level set of a function. More precisely, there exist a neighborhood
$\mathcal{U}$ of $y_{\ast}$ and a function $G(\cdot)\in\mathcal{F}$
such that $\nabla G(x)\ne0$ in $\mathcal{U}$, $G^{-1}(0)=\partial\mathcal{D}\cap\mathcal{U}$,
$G(x)>0$ in $\mathcal{U}\cap\left(\mathcal{M\setminus\mathrm{cl}}(\mathcal{D})\right)$
and $G(x)<0$ in $\mathcal{U}\cap\mathcal{D}$. Besides,
\begin{gather*}
\nu(x)=\frac{1}{\left\Vert \nabla G(x)\right\Vert }\nabla G(x),\quad\left\langle II(x)\xi,\xi\right\rangle =\frac{1}{\left\Vert \nabla G(x)\right\Vert }\left\langle H_{G}(x)\xi,\xi\right\rangle ,\quad\xi\in T_{x}\partial\mathcal{D}.
\end{gather*}
Now for sufficiently small $\delta>0$ we have
\begin{gather}
g(s):=G(x(s,\xi_{\ast}))<0\quad\forall s\in(s_{\ast}-\delta,s_{\ast}),\quad g(s_{\ast})=G(x(s_{\ast},\xi_{\ast}))=0.\label{eq:g(s)<0}
\end{gather}
 Obviously
\begin{gather*}
g^{\prime}(s_{\ast}):=\frac{\mathrm{d}}{\mathrm{d}s}\Bigl|_{s=s_{\ast}}G(x(s_{\ast},\xi_{\ast}))\ge0.
\end{gather*}
The case where $g^{\prime}(s_{\ast})>0$ is impossible. In fact, in
such a case there would exist $s^{\prime}\in(s_{\ast},1)$ such that
$G(x(s^{\prime},\xi))>0$ and than
\[
G(x(s^{\prime},\xi_{k}))>0\quad\Rightarrow\quad x(s^{\prime},\xi_{k})\not\in\mathcal{D}
\]
 for all sufficiently large natural $k$. Thus, $g^{\prime}(s_{\ast})=0$.
Now observe that
\begin{gather*}
g^{\prime\prime}(s_{\ast})=\left[\left\langle H_{G}(x)x^{\prime},x^{\prime}\right\rangle +\left\langle \nabla G(x),\nabla_{x^{\prime}}x^{\prime}\right\rangle \right]_{x=x(s_{\ast},\xi_{\ast})}=\\
=\left[\left\langle H_{G}(x)x^{\prime},x^{\prime}\right\rangle +\frac{\left\Vert x^{\prime}\right\Vert ^{2}}{2}\left\langle \nabla G(x),U(x)\right\rangle \right]_{x=x(s_{\ast},\xi_{\ast})}.
\end{gather*}
Since $\xi_{\ast}\ne0$ then on account of~(\ref{eq:cons-low}) we
have $x^{\prime}(s_{\ast},\xi_{\ast})\ne0$ and the condition~(\ref{eq:II-cond})
implies that $g^{\prime\prime}(s_{\ast})>0$. But then $g(\cdot)$
reaches its strict local minimum at the point $s=s_{\ast}$, and this
produces a contradiction with (\ref{eq:g(s)<0}).

Thus we have proved that $\mathcal{X}$ is an open-close subset of
open set $\mathcal{D}$. This implies that $\mathcal{X}=\mathcal{D}$.
Now we can assert that for any $\left\{ x_{0},x_{1}\right\} \subset\mathcal{D}$
there exists $\xi(x_{0},x_{1})\in T_{x_{0}}\mathcal{M}$ such that
$x(s,\xi(x_{0},x_{1}))\in\mathcal{D}$ for all $s\in[0,1]$, and $x(1,\xi(x_{0,}x_{1}))=x_{1}$.

By repeating the same arguments as above, we obtain the inequality
\begin{gather*}
U(x_{1})\ge U(x_{0})-\left\Vert \nabla U(x_{0})\right\Vert \left\Vert \xi\right\Vert +\frac{l_{U}}{2}\left\Vert \xi\right\Vert ^{2}\exp\left(U(x_{0})-U(x(\theta,\xi))\right)
\end{gather*}
 with $\xi=\xi(x_{0},x_{1})$ and some $\theta=\theta(x_{0},x_{1})\in(0,1)$.
Hence,
\begin{gather*}
\frac{l_{U}}{2}\mathrm{e}^{U_{\ast}-U^{\ast}}\left\Vert \xi\right\Vert ^{2}-C_{U}\left\Vert \xi\right\Vert -U^{\ast}+U_{\ast}\le0\quad\Rightarrow\quad\left\Vert \xi\right\Vert \le d.
\end{gather*}
 Now let $\left\{ x_{0}^{\ast},x_{1}^{\ast}\right\} \subset\mathrm{cl}(\mathcal{D})$.
One can define sequences $\left\{ x_{i}^{k}\right\} \subset\mathcal{D}$,
$x_{i}^{k}\to x_{i}^{\ast}$, $k\to\infty$, $i\in\left\{ 0,1\right\} $
such that $\xi\left(x_{0}^{k},x_{1}^{k}\right)\to\xi^{\ast}\in T_{x_{0}^{\ast}}\mathcal{M}$,
$\left\Vert \xi^{\ast}\right\Vert \le d$. Then $x(1,\xi^{\ast})=x_{1}^{\ast}$
and $x(s,\xi_{\ast})\in\mathrm{cl}(\mathcal{D})$ for all $s\in[0,1]$.
But actually the above arguments concerning the function $g(s)$ allow
us to assert that there is no point $s_{\ast}\in(0,1)$ such that
$x(s_{\ast},\xi_{\ast})\in\partial\mathcal{D}$.
\end{proof}

\section{Proofs of theorems\label{sec:proofs}}

\subsection*{Proofs of Theorems~\ref{thm:Th-Bound-sol-f} and \ref{thm:Th-B-sol-f-P}}

We proceed straight to the proof of Theorem~\ref{thm:Th-B-sol-f-P}.

Put $\bar{f}(x):=(2\pi)^{-k}\intop_{\mathbb{T}^{k}}f(\varphi,x)\mathrm{d}\varphi$
and observe that $\left\langle \nu(x),\bar{f}(x)\right\rangle >0$
for all $x\in\partial\mathcal{D}$. For $s\subset\mathbb{R}$ and
$x\in\mathcal{M}$ let $t\mapsto X_{s}^{t}(x)$, $t\in I(s,x)\subset\mathbb{R},$
be the non-extendable solution of (\ref{eq:qp-sys-f-P}) satisfying
the initial conditions
\begin{gather*}
X_{s}^{s}(x)=x,\quad\dot{X_{s}^{s}}(x)=\epsilon\bar{f}(x)\quad\left(\dot{X_{s}^{t}}(x):=\frac{\partial}{\partial t}X_{s}^{t}(x)\right)
\end{gather*}
where $\epsilon>0$ is small enough to ensure that $\left\Vert \epsilon\bar{f}(x)\right\Vert \le z_{+}$.
Hence, $\left\Vert \dot{X_{s}^{s}}(x)\right\Vert $ satisfies the
same inequality as $\left\Vert \dot{x}(s)\right\Vert $ in Proposition~\ref{prop:bound-for-dotx-Pert}.
Let us show that there exists $x_{0,s}\in\mathcal{D}$ such that $X_{s}^{t}(x_{0,s})\in\mathcal{D}$
 for all $t\ge s$. We shall exploit ideas of Wa\.zewski topological
principle. By reasoning ad absurdum, suppose that such a $x_{0,s}$
does not exist. Then
\begin{gather*}
T(x):=\sup\left\{ T>s:X_{s}^{t}(x)\in\mathcal{D}\;\forall t\in[s,T]\right\} <\infty\quad\forall x\in\mathcal{D}.
\end{gather*}
Obviously, $y(x):=X_{s}^{T(x)}(x)\in\partial\mathcal{D}$. By the
same arguments as in the proof of Proposition~\ref{prop:connect-map},
there exists a neighborhood $\mathcal{U}$ of $y(x)$ and a function
$G(\cdot)\in\mathcal{F}(\mathcal{M})\mapsto\mathbb{R}$ such that
$\nabla G(x)\ne0$ in $\mathcal{U}$, $G^{-1}(0)=\partial\mathcal{D}\cap\mathcal{U}$,
$G(x)>0$ in $\mathcal{U}\cap\left(\mathcal{M\setminus\mathrm{cl}}(\mathcal{D})\right)$,
$G(x)<0$ in $\mathcal{U}\cap\mathcal{D}$, and on account of $\lambda_{II}(x)>0$
the Hesse form $H_{G}(x)$ is positive definite. There also exists
$\delta(x)>0$ such that
\begin{gather}
G\left(X_{s}^{t}(x)\right)<0\quad\forall t\in[T(x)-\delta(x),T(x)),\quad G\left(X_{s}^{T(x)}(x)\right)=0.\label{eq:G<0}
\end{gather}
Obviously that
\begin{gather*}
\frac{\partial}{\partial t}\Bigr|_{t=T(x)}G\left(X_{s}^{t}(x)\right)=\left\langle \nabla G\left(y(x)\right),\dot{X}_{s}^{T(x)}(x)(x)\right\rangle \ge0,
\end{gather*}
and since there holds the inequality~(\ref{eq:4lambda-ge}), then
\begin{gather*}
\frac{\partial^{2}}{\partial t^{2}}\Bigr|_{t=T(x)}G\left(X_{s}^{t}(x)\right)=\\
\left[\left\langle H_{G}\left(y\right)\xi,\xi\right\rangle +\left\langle \nabla G\left(y\right),f(t\omega,y)+P(t\omega,y)\xi\right\rangle \right]\Bigl|_{y=y(x),\xi=\dot{X}_{s}^{T(x)}(x)}=\\
\left\Vert \nabla G(x)\right\Vert \left[\left\langle II(y)\xi,\xi\right\rangle +\left\langle \nu(y),f(t\omega,y)+P(t\omega,y)\xi\right\rangle \right]\Bigl|_{y=y(x),\xi=\dot{X}_{s}^{T(x)}(x)}\ge\\
\left\Vert \nabla G(x)\right\Vert \left[\lambda_{II}(y)\left\Vert \xi\right\Vert ^{2}-\left\Vert P^{\ast}(t\omega,y)\nu(y)\right\Vert \left\Vert \xi\right\Vert +\left\langle \nu(y),f(t\omega,y)\right\rangle \right]\Bigl|_{y=y(x),\xi=\dot{X}_{s}^{T(x)}(x)}>0.
\end{gather*}
 But now
\begin{gather}
\frac{\partial}{\partial t}\Bigr|_{t=T(x)}G\left(X_{s}^{t}(x)\right)>0.\label{eq:dG/dt>0}
\end{gather}
 In fact, if the derivative in the right hand side of (\ref{eq:dG/dt>0})
were zero, then the function $t\mapsto G\left(X_{s}^{t}(x)\right)$
would achieve a strict local minimum at $t=T(x)$. But this is impossible
on account of~(\ref{eq:G<0}).

The inequality(\ref{eq:dG/dt>0}) together with the inverse function
theorem implies that $T(\cdot):\mathcal{D}\mapsto\mathbb{R}$ is smooth.
Let now $y\in\mathcal{U}\cap\partial\mathcal{D}$. Then
\[
G\left(X_{s}^{s}(y)\right)=0,\quad\frac{\partial}{\partial t}\Bigl|_{t=s}G(X_{s}^{t}(y))=\left\Vert \nabla G(y)\right\Vert \left\langle \nu(y),\bar{f}(y)\right\rangle >0.
\]
Hence, the function $T(\cdot)$ is also smooth in a small neighborhood
of $y$ and $T(y)=s$. It follows from the above that the mapping
$\rho(\cdot,\cdot):\mathrm{cl}(\mathcal{D})\times[0,1]\mapsto\mathrm{cl}(\mathcal{D})$
defined by the formula
\begin{gather*}
\mathrm{cl}(\mathcal{D})\times[0,1]\ni\;(x,u)\mapsto X_{s}^{uT(x)+(1-u)s}(x)\in\mathrm{cl}(\mathcal{D})
\end{gather*}
is a deformation retraction of $\mathrm{cl}(\mathcal{D})$ onto $\partial\mathcal{D}$.
We reach a contradiction, since the boundary of compact manifold $\mathrm{cl}(\mathcal{D})$
cannot be a retract of $\mathrm{cl}(\mathcal{D})$.

Thus we have proved that $x_{0,s}$ does exist. Now from Propositions~\ref{prop:T=00003Dinfty}
and \ref{prop:bound-for-dotx-Pert} it follows that the solution defined
by $x_{s}(t):=X_{s}^{t}(x_{0,s})$ satisfies
\begin{gather}
x_{s}(\cdot):[s,\infty)\mapsto\mathcal{D},\quad\sup_{t\ge s}\left\Vert \dot{x}_{s}(t)\right\Vert \le z_{\ast}.\label{eq:bounds-for-x-dotx}
\end{gather}
Consider the sequence $\left\{ x_{-i}(\cdot):[-i,\infty)\mapsto\mathcal{D}\right\} _{i\in\mathbb{N}}$.
Obviously that $x_{-i}(\cdot)$ is the solution of~(\ref{eq:qp-sys-f-P})
satifying the initial conditions
\begin{gather*}
x\bigl|_{t=0}=x_{-i}(0),\quad\dot{x}\bigl|_{t=0}=\dot{x}_{-i}(0).
\end{gather*}
 From~(\ref{eq:bounds-for-x-dotx}) it follows that there exists
a sub-sequence $i_{j}\to\infty$ , $j\to\infty$, such that the sequence
$\left\{ \left(x_{-i_{j}}(0),\dot{x}_{-i_{j}}(0)\right)\right\} _{j\in\mathbb{N}}$
converges to a point $(x_{0},\xi_{0})$ such that $x_{0}\in\mathrm{cl}(\mathcal{D})$,
$\xi_{0}\in T_{x_{0}}\mathcal{M}$, $\left\Vert \xi_{0}\right\Vert \le C_{\ast}$.
Now it is not hard to see that the non-extendable solution $x_{\ast}(\cdot)$
of (\ref{eq:qp-sys-f-P}) satisfying the initial conditions
\begin{gather*}
x_{\ast}(0)=x_{0},\quad\dot{x}_{\ast}(0)=\xi_{0}
\end{gather*}
 is defined on the whole real line and satisfies the conditions
\begin{gather*}
x_{\ast}(\cdot):\mathbb{R}\mapsto\mathrm{cl}(\mathcal{D}),\quad\sup_{t\in\mathbb{R}}\left\Vert \dot{x}(t)\right\Vert \le z_{\ast}.
\end{gather*}
In fact, if this were not true, then there would exist $t^{\prime}\in\mathbb{R}$
such that either $x_{\ast}(t^{\prime})\in\mathcal{M}\setminus\mathrm{cl}(\mathcal{D})$
or $\left\Vert \dot{x}_{\ast}(t^{\prime})\right\Vert >z_{\ast}$,
and then, respectively, either $x_{i_{j}}(t^{\prime})\notin\mathcal{D}$
or $\left\Vert \dot{x}_{i_{j}}(t^{\prime})\right\Vert >z_{\ast}$
for all sufficiently large $j$. This is impossible on account of~(\ref{eq:bounds-for-x-dotx}).

The same arguments as above (see also \cite{Cie03}) allows us to
show that $x_{\ast}(\cdot):\mathbb{R}\mapsto\mathcal{D}$. In fact,
if there were a moment $t_{0}$ such that $x_{\ast}(t_{0})\in\partial\mathcal{D}$,
then
\begin{gather*}
G(x_{\ast}(t_{0}))=0,\quad\frac{\mathrm{d}}{\mathrm{d}t}\Bigl|_{t=t_{0}}G(x_{\ast}(t))=0,\quad\frac{\mathrm{d^{2}}}{\mathrm{d}t^{2}}\Bigl|_{t=t_{0}}G(x_{\ast}(t))>0.
\end{gather*}
This implies that the function $G\left(x_{\ast}(\cdot)\right)$ achieves
a strict local minimum at $t=t_{0}$, and hence $\left\{ t\in\mathbb{R}:G(x_{\ast}(t))>0\right\} \ne\varnothing$.
We reach a contradiction. Q.E.D.

\subsection*{Proofs of Theorems~\ref{thm:hyperbol} and~\ref{thm:hyperbol-P} }

We start with the proof of Theorem \ref{thm:hyperbol-P}. Let $x(\cdot):\mathbb{R}\mapsto\mathcal{D}$
be a solution of System~(\ref{eq:qp-sys-f-P}) such that $\sup_{t\in\mathbb{R}}\left\Vert \dot{x}(t)\right\Vert :=Z<\infty$.
Put $\tau(t)=\dot{x}(t)$, introduce the non-degenerate quadratic
form
\begin{gather*}
Q(\eta,\zeta;t)=\left\langle \eta,\zeta\right\rangle +\frac{\left\Vert \eta\right\Vert ^{2}}{2}\left\langle \nabla U(x(t)),\tau(t)\right\rangle ,\quad\eta,\zeta\in T_{x(t)}\mathcal{M}
\end{gather*}
 and find its derivative along solution of the linear variational
system with respect to $x(t)$:
\begin{gather*}
\dot{Q}(\eta,\zeta;t)=\left\Vert \zeta\right\Vert ^{2}+\left\langle \eta,\nabla f\eta\right\rangle -\left\langle R(\eta,\tau)\tau,\eta\right\rangle +\left\langle \nabla P(\eta,\tau),\eta\right\rangle +\left\langle P\zeta,\eta\right\rangle +\\
\left\langle \eta,\zeta\right\rangle \left\langle \nabla U,\tau\right\rangle +\frac{\left\Vert \eta\right\Vert ^{2}}{2}\left(\left\langle H_{U}\tau,\tau\right\rangle +\left\langle \nabla U,f+P\tau\right\rangle \right)
\end{gather*}
For the sake of simplifying the calculations, here and below we do
not show explicitly the arguments $\varphi=t\omega$, $x=x(t)$. Taking
into account~(\ref{eq:def-mu_U}), (\ref{eq:def-MPU}), (\ref{eq:def-LP-1}),
(\ref{eq:def-LP-1}) we have
\begin{gather*}
\dot{Q}(\eta,\zeta;t)\ge\\
\ge\left\Vert \zeta\right\Vert ^{2}-\left\Vert \eta\right\Vert \left\Vert \zeta\right\Vert \left(\left|\left\langle \nabla U,\tau\right\rangle \right|+M_{P}\right)+\left\Vert \eta\right\Vert ^{2}\left[\frac{\left\langle H_{U}\tau,\tau\right\rangle }{2}-K\left\Vert \tau\right\Vert ^{2}-L_{P}\left\Vert \tau\right\Vert \right]+\\
\left\Vert \eta\right\Vert ^{2}\left[\lambda_{f}+\frac{1}{2}\left\langle \nabla U,f\right\rangle -\frac{M_{PU}}{2}\left\Vert \tau\right\Vert \right]\ge\\
\left[\left\Vert \zeta\right\Vert -\frac{1}{2}\left\Vert \eta\right\Vert \left|\left\langle \nabla U,\tau\right\rangle \right|\right]^{2}+\frac{\left\Vert \eta\right\Vert ^{2}\left\Vert \tau\right\Vert ^{2}}{2}\left[\mu_{U}-2K\right]+\\
+\left\Vert \eta\right\Vert ^{2}\left[\lambda_{f}+\frac{\left\langle \nabla U,f\right\rangle }{2}\right]_{x=x(t)}-M_{P}\left\Vert \zeta\right\Vert \left\Vert \eta\right\Vert -\left(L_{P}+\frac{M_{PU}}{2}\right)Z\left\Vert \eta\right\Vert ^{2},
\end{gather*}
 Since there holds the inequality~(\ref{eq:lambdaf>sigmaZ}) and
$\left|\left\langle \nabla U,\tau\right\rangle \right|\le M_{U}Z$,
then there exists a constant $\alpha_{1}>0$ such that
\begin{gather*}
\dot{Q}(\eta,\zeta;t)\ge\alpha_{1}\left(\left\Vert \zeta\right\Vert ^{2}+\left\Vert \eta\right\Vert ^{2}\right).
\end{gather*}
 Let $\Theta_{s}^{t}:T_{x_{\ast}(s)}\mathcal{M}\mapsto T_{x_{\ast}(t)}\mathcal{M}$
be the cocycle of parallel shift along the mapping $x_{\ast}(\cdot)$
from point $x_{\ast}(s)$ to point $x_{\ast}(t)$. For any $\eta\in T_{x_{\ast}(s)}$$\mathcal{M}$,
there holds
\begin{gather*}
\nabla_{\tau(t)}\Theta_{s}^{t}\eta=0\quad\forall t\in\mathbb{R},\quad\Theta_{s}^{s}=\mathrm{Id,}\quad\Theta_{s}^{t}\Theta_{r}^{s}=\Theta_{r}^{t}.
\end{gather*}
After the change of variables
\begin{gather*}
\eta=\Theta_{0}^{t}y,\quad\zeta=\Theta_{0}^{t}z,\quad y,z\in T_{x_{\ast}(0)}\mathcal{M}
\end{gather*}
the system in variations takes the form
\begin{gather}
\dot{y}=z,\quad\dot{z}=A(t)y\label{eq:reduced-var-sys}
\end{gather}
where
\begin{gather*}
A(t):=\Theta_{t}^{0}\left[\nabla f(t\omega,x)\Theta_{0}^{t}y-R(\Theta_{0}^{t}y,\tau)\tau\right]_{x=x_{\ast}(t),\tau=\tau_{\ast}(t)}+\\
\left[\nabla P\left(t\omega,x\right)(\Theta_{0}^{t}y,\tau)+P\left(t\omega,x\right)\Theta_{0}^{t}z\right]_{x=x_{\ast}(t),\tau=\tau_{\ast}(t)},
\end{gather*}
$\tau_{\ast}(t):=\dot{x}_{\ast}(t)$. Since $\left\langle \Theta_{0}^{t}y,\Theta_{0}^{t}z\right\rangle =\left\langle y,z\right\rangle $,
it follows from the above that the derivative of quadratic form
\begin{gather*}
Q_{0}(y,z;t):=\left\langle y,z\right\rangle +\frac{\left\Vert y\right\Vert ^{2}}{2}\left\langle \nabla U(x_{\ast}(t)),\tau(t)\right\rangle
\end{gather*}
along solutions of System~(\ref{eq:reduced-var-sys}) is positive
definite. It is known that the existence of non-degenerate quadratic
form with the above property implies that System~(\ref{eq:reduced-var-sys})
is exponentially dichotomic~\cite{Sam2002}. Q.E.D.

The proof of Theorem~\ref{thm:hyperbol} is obviously follows from
the above one by letting $P(\varphi,x)=0$.

\subsection*{Proofs of Theorems~\ref{thm:quasiper-sol-f} and \ref{thm:quasiper-sol-f-P}}

Let us proceed to the proof of Theorem~\ref{thm:quasiper-sol-f-P}.
Theorem~\ref{thm:quasiper-sol-f} will immediately follow from Theorem~\ref{thm:quasiper-sol-f-P}.
By Theorem~\ref{thm:Th-B-sol-f-P} the domain $\mathcal{D}$ contains
a solution $x_{\ast}(\cdot):\mathbb{R}\mapsto\mathcal{D}$ of System~(\ref{eq:def-mu_U})
such that $\sup_{t\in\mathbb{R}}\left\Vert \dot{x}_{\ast}(t)\right\Vert \le z_{\ast}$.
Let us show that the solution with the above properties is unique.
Suppose that there exist two solutions $x_{i}(\cdot):\mathbb{R}\mapsto\mathcal{D}$
of System~(\ref{eq:qp-sys-f-P}) such that $\sup_{t\in\mathbb{R}}\left\Vert \dot{x}_{i}(t)\right\Vert \le z_{\ast}$,
$i\in\left\{ 1,2\right\} $. By Propositions~\ref{prop:nondegen}
and \ref{prop:connect-map}, with the help of implicit function theorem
and a continuation procedure one can construct a smooth vector field
$\xi(\cdot):\mathbb{R}\mapsto T\mathcal{M}$ along $x_{1}(\cdot)$
such that
\begin{gather*}
\xi(t)\in T_{x_{1}(t)}\mathcal{M},\quad\left\Vert \xi(t)\right\Vert \le d,\quad\frac{\partial}{\partial s}\Bigl|_{s=0}x(s,\xi(t))=\xi(t),\\
x(0,\xi(t))=x_{1}(t),\quad x(1,\xi(t))=x_{2}(t).
\end{gather*}

Introduce the smooth mapping $\chi(\cdot,\cdot):[0,1]\times\mathbb{R}\mapsto\mathcal{D}$
by the equality $\chi(s,t):=x(s,\xi(t))$ and define the tangent vector
fields $\chi^{\prime}(\cdot,\cdot)$, $\dot{\chi}(\cdot,\cdot)$ along
this mapping as
\begin{gather*}
\chi^{\prime}(s,t):=\frac{\partial}{\partial s}x(s,\xi(t)),\quad\dot{\chi}(s,t):=\frac{\partial}{\partial t}x(s,\xi(t)).
\end{gather*}
 Define also the function
\begin{gather}
S(t):=\left\langle \chi^{\prime},\dot{\chi}\right\rangle \Bigl|_{s=0}^{s=1}\equiv\left\langle \chi^{\prime}(1,t),\dot{x}_{2}(t)\right\rangle -\left\langle \xi(t),\dot{x}_{1}(t)\right\rangle \label{eq:def-S(t)}
\end{gather}
 and calculate its derivative:
\begin{gather*}
\dot{S}(t)=\left[\frac{\mathrm{d}}{\mathrm{d}t}\left\langle \chi^{\prime},\dot{\chi}\right\rangle \right]\Bigl|_{s=0}^{s=1}=\left[\left\langle \nabla_{\dot{\chi}}\chi^{\prime},\dot{\chi}\right\rangle +\left\langle \chi^{\prime},\nabla_{\dot{\chi}}\dot{\chi}\right\rangle \right]\Bigl|_{s=0}^{s=1}=\\
\left[\left\langle \nabla_{\chi^{\prime}}\dot{\chi},\dot{\chi}\right\rangle +\left\langle \chi^{\prime},\nabla_{\dot{\chi}}\dot{\chi}\right\rangle \right]\Bigl|_{s=0}^{s=1}=\\
\left[\frac{\partial}{\partial s}\frac{\left\Vert \dot{\chi}\right\Vert ^{2}}{2}\right]_{s=0}^{s=1}+\left\langle \chi^{\prime},f(t\omega,\chi)+P(t\omega,\chi)\dot{\chi}\right\rangle \Bigl|_{s=0}^{s=1}=\\
\intop_{0}^{1}\left[\frac{\partial^{2}}{\partial s^{2}}\frac{\left\Vert \dot{\chi}\right\Vert ^{2}}{2}+\frac{\partial}{\partial s}\left\langle \chi^{\prime},f(t\omega,\chi)+P(t\omega,\chi)\dot{\chi}\right\rangle \right]\mathrm{d}s.
\end{gather*}
Using the equalities
\begin{gather*}
\nabla_{\dot{\chi}}\chi^{\prime}=\nabla_{\chi^{\prime}}\dot{\chi},\quad\nabla_{\chi^{\prime}}^{2}\dot{\chi}=\nabla_{\chi^{\prime}}\nabla_{\dot{\chi}}\chi^{\prime}=\nabla_{\dot{\chi}}\nabla_{\chi^{\prime}}\chi^{\prime}-R(\dot{\chi},\chi^{\prime})\chi^{\prime},\\
\nabla_{\chi^{\prime}}\chi^{\prime}=\left\Vert \chi^{\prime}\right\Vert ^{2}\nabla U(\chi)/2,
\end{gather*}
we obtain
\begin{gather*}
\frac{\partial^{2}}{\partial s^{2}}\frac{\left\Vert \dot{\chi}\right\Vert ^{2}}{2}=\left\Vert \nabla_{\chi^{\prime}}\dot{\chi}\right\Vert ^{2}+\left\langle \nabla_{\dot{\chi}}\nabla_{\chi^{\prime}}\chi^{\prime},\dot{\chi}\right\rangle -\left\langle R(\dot{\chi},\chi^{\prime})\chi^{\prime},\dot{\chi}\right\rangle =\\
\left\Vert \nabla_{\dot{\chi}}\chi^{\prime}\right\Vert ^{2}+\left\langle \nabla_{\dot{\chi}}\chi^{\prime},\chi^{\prime}\right\rangle \left\langle \nabla U(\chi),\dot{\chi}\right\rangle +\frac{\left\Vert \chi^{\prime}\right\Vert ^{2}}{2}\left\langle H_{U}(\chi)\dot{\chi},\dot{\chi}\right\rangle -\\
K(\chi)\left[\left\Vert \chi^{\prime}\right\Vert ^{2}\left\Vert \dot{\chi}\right\Vert ^{2}-\left\langle \chi^{\prime},\dot{\chi}\right\rangle ^{2}\right]\ge\\
\left\Vert \nabla_{\dot{\chi}}\chi^{\prime}\right\Vert ^{2}-\left\Vert \nabla_{\dot{\chi}}\chi^{\prime}\right\Vert \left\Vert \chi^{\prime}\right\Vert \left|\left\langle \nabla U(\chi),\dot{\chi}\right\rangle \right|+\left\Vert \chi^{\prime}\right\Vert ^{2}\left(\frac{1}{2}\left\langle H_{U}(\chi)\dot{\chi},\dot{\chi}\right\rangle -K(\chi)\left\Vert \dot{\chi}\right\Vert ^{2}\right)=\\
\left[\left\Vert \nabla_{\dot{\chi}}\chi^{\prime}\right\Vert -\frac{\left|\left\langle \nabla U(\chi),\dot{\chi}\right\rangle \right|}{2}\right]^{2}+\frac{\left\Vert \chi^{\prime}\right\Vert ^{2}}{2}\left[\mu_{U}(\chi)-2K(\chi)\right]\ge0;
\end{gather*}
\begin{gather*}
\frac{\partial}{\partial s}\left\langle f(\varphi,\chi),\chi^{\prime}\right\rangle =\left\langle \nabla f(\varphi,\chi)\chi^{\prime},\chi^{\prime}\right\rangle +\frac{\left\Vert \chi^{\prime}\right\Vert ^{2}}{2}\left\langle f(\varphi,\chi),\nabla U(\chi)\right\rangle \ge\\
\left\Vert \chi^{\prime}\right\Vert ^{2}\left[\lambda_{f}(\varphi,\chi)+\frac{1}{2}\left\langle f(\varphi,\chi),\nabla U(\chi)\right\rangle \right].
\end{gather*}
Since $\frac{\partial^{2}}{\partial s^{2}}\left\Vert \dot{\chi}(s,t)\right\Vert ^{2}\ge0$,
then $ $$ $
\begin{gather*}
\left\Vert \dot{\chi}(s,t)\right\Vert ^{2}\le\frac{1}{2}\left[\left\Vert \dot{\chi}(0,t)\right\Vert ^{2}+\left\Vert \dot{\chi}(1,t)\right\Vert ^{2}\right]=\frac{1}{2}\left[\left\Vert \dot{x}_{1}(t)\right\Vert ^{2}+\left\Vert \dot{x}_{2}(t)\right\Vert ^{2}\right]\le z_{\ast}^{2}.
\end{gather*}
Now
\begin{gather*}
\left|\frac{\partial}{\partial s}\left\langle P(\varphi,x)\dot{\chi},\chi^{\prime}\right\rangle \right|\le\left|\left\langle \nabla_{\chi^{\prime}}\left[P(\varphi,\chi)\dot{\chi}\right],\chi^{\prime}\right\rangle \right|+\left|\left\langle P(\varphi,\chi)\dot{\chi},\nabla_{\chi^{\prime}}\chi^{\prime}\right\rangle \right|\le\\
\left|\left\langle \nabla P(\varphi,\chi)(\chi^{\prime},\dot{\chi}),\chi^{\prime}\right\rangle \right|+\left|\left\langle P(\varphi,\chi)\nabla_{\chi^{\prime}}\dot{\chi},\chi^{\prime}\right\rangle \right|+\left|\left\langle P(\varphi,\chi)\dot{\chi},\nabla_{\chi^{\prime}}\chi^{\prime}\right\rangle \right|\le\\
L_{P}(\varphi,\chi)z_{\ast}\left\Vert \chi^{\prime}\right\Vert ^{2}+M_{P}(\varphi,\chi)\left\Vert \nabla_{\dot{\chi}}\chi^{\prime}\right\Vert \left\Vert \chi^{\prime}\right\Vert +\frac{M_{PU}(\varphi,\chi)z_{\ast}}{2}\left\Vert \chi^{\prime}\right\Vert ^{2}.
\end{gather*}
Just like in the proof of Theorem~(\ref{thm:hyperbol-P}), it is
not hard to show that the above inequalities together with condition~(\ref{eq:labdaf-g-sigma})
imply that there exists $\alpha_{2}>0$ such that there holds the
inequality
\begin{gather*}
\dot{S}(t)\ge\alpha_{2}\intop_{0}^{1}\left(\left\Vert \nabla_{\dot{\chi}}\chi^{\prime}\right\Vert ^{2}+\left\Vert \chi^{\prime}\right\Vert ^{2}\right)\mathrm{d}s.
\end{gather*}
 By Proposition~\ref{prop:connect-map} $\sup_{t\in\mathbb{R}}\left\Vert \xi(t)\right\Vert \le d$,
and on account of~(\ref{eq:cons-low}) and~(\ref{eq:U^*-U_*}) we
have
\begin{equation}
\left\Vert \xi(t)\right\Vert ^{2}\mathrm{e}^{U_{\ast}-U^{\ast}}\le\left\Vert \chi^{\prime}(s,t)\right\Vert ^{2}=\left\Vert \xi(t)\right\Vert ^{2}\exp\left(U(x_{1}(t))-U(\chi(s,t))\right)\le\left\Vert \xi(t)\right\Vert ^{2}\mathrm{e}^{U^{\ast}-U_{\ast}}.\label{eq:chi<xi}
\end{equation}
 Thus we obtain the inequality $\dot{S}(t)\ge\alpha_{2}\mathrm{e}^{U_{\ast}-U^{\ast}}\left\Vert \xi(t)\right\Vert ^{2}$
which together with boundedness of $\left\Vert \xi(t)\right\Vert $
and $\left|S(t)\right|$ yields that
\[
-\infty<S(-\infty)<S(0)<S(+\infty)<\infty.
\]
 Now it turns out that either $l_{+}:=\liminf_{t\to+\infty}\left\Vert \xi(t)\right\Vert >0$
or $l_{-}:=\liminf_{t\to-\infty}\left\Vert \xi(t)\right\Vert >0$.
In fact, if $l_{-}=l_{+}=0$ then~(\ref{eq:def-S(t)}) and(\ref{eq:chi<xi})
implies that $S(-\infty)=S(+\infty)=0$ and we reach the contradiction.
But if $l_{+}>0$, then $S(+\infty)=+\infty$, and if $l_{-}>0$ then
$S(-\infty)=-\infty$. Both these cases produce the contradiction.
Hence, we have proved the announced uniqueness.

Obviously, the above reasoning is valid also for any system of the
form
\[
\nabla_{\dot{x}}\dot{x}=f(t\omega+\varphi,x)+P(t\omega+\varphi,x)\dot{x}\quad\forall\varphi\in\mathbb{T}^{k}.
\]
 Hence, for any $\varphi\in\mathbb{T}^{k}$ there exists the solution
$x_{\ast}(\cdot,\varphi):\mathbb{R}\mapsto\mathcal{D}$ which generates
the single valued mapping $x_{\ast}(\cdot,\cdot):\mathbb{R}\times\mathbb{T}^{k}\mapsto\mathcal{D}$
such that $\sup_{(t,\varphi)\in\mathbb{R}\times\mathbb{T}^{k}}\left\Vert \dot{x}_{\ast}(t,\varphi)\right\Vert \le z_{\ast}$.
But then
\begin{gather*}
x_{\ast}(t+s,\varphi)=x_{\ast}(s,t\omega+\varphi)\quad\forall\{t,s\}\subset\mathbb{R},\;\varphi\in\mathbb{T}^{k}.
\end{gather*}
 If we put here $s=0$ and define the mapping $h(\cdot):=x_{\ast}(0,\cdot):\mathbb{T}^{k}\mapsto\mathcal{D}$
then we obtain
\begin{gather*}
x_{\ast}(t,\varphi)=h(t\omega+\varphi).
\end{gather*}
To show that $x_{\ast}(t,\varphi)$ is quasiperiodic, let us prove
that $h(\cdot)$ is continuous. Suppose that the opposite is true.
Then there exists $\varphi_{\ast}\in\mathbb{T}^{k}$ and a sequence
$\left\{ \varphi_{i}\right\} \subset\mathbb{T}^{k}$ converging to
$\varphi_{\ast}$ such that
\begin{gather*}
h(\varphi_{i})\to\tilde{x}_{0}\in\mathrm{cl}(\mathcal{D}),\quad\dot{x}_{\ast}(0,\varphi_{i})\to\tilde{\xi}_{0},\quad\left\Vert \tilde{\xi}_{0}\right\Vert \le z_{\ast},\quad,\tilde{x}_{0}\ne x_{\ast}(0,\varphi_{\ast}).
\end{gather*}
Consider the non-extendable solution $\tilde{x}(\cdot):I\mapsto\mathcal{M}$
of the initial-value problem
\begin{gather*}
\nabla_{\dot{x}}\dot{x}=f(t\omega+\varphi_{\ast},x)+P(t\omega+\varphi_{\ast},x)\dot{x},\quad x(0)=\tilde{x}_{0},\quad\dot{x}(0)=\tilde{\xi}_{0}.
\end{gather*}
 Since each system $\nabla_{\dot{x}}\dot{x}=f(t\omega+\varphi_{i},x)+P(t\omega+\varphi_{i},x)\dot{x}$
is equivalent to the first order system
\begin{gather*}
\dot{x}=\eta,\quad\nabla_{\dot{x}}\eta=f(t\omega+\varphi_{i},x)+P(t\omega+\varphi_{i},x)\dot{x}
\end{gather*}
and $\left\{ f(\varphi+\varphi_{i},x)+P(\varphi+\varphi_{i},x)\dot{x}\right\} $
converges to $f(\varphi+\varphi_{\ast},x)+P(\varphi+\varphi_{i},x)\dot{x}$
uniformly with respect to $\varphi\in\mathbb{T}^{k}$, $x\in\mathrm{cl}(\mathcal{D})$,
$\dot{x}\in T_{x}\mathcal{M}$, and $\left\Vert \dot{x}\right\Vert \le z_{\ast}$,
then for any closed segment $J\subset I$ the sequence $\left\{ \left(x_{\ast}(t,\varphi_{i}),\dot{x}_{\ast}(t,\varphi_{i})\right)\right\} $
converges to $\left(\tilde{x}(t),\frac{\mathrm{d}}{\mathrm{d}t}\tilde{x}(t)\right)$
uniformly with respect to $t\in J$. This yields that $\tilde{x}(\cdot):I\mapsto\mathrm{cl}(\mathcal{D})$,
$\sup_{t\in I}\left\Vert \frac{\mathrm{d}}{\mathrm{d}t}\tilde{x}(t)\right\Vert \le z_{\ast}$
and hence $I=\mathbb{R}$. The same arguments as in the proof of Theorem~\ref{thm:Th-B-sol-f-P}
allow us to show that $\tilde{x}(\cdot):\mathbb{R}\mapsto\mathcal{D}$.
Thus we reach the contradiction with uniqueness of solution taking
values in $\mathcal{D}$ and possessing the derivative of norm bounded
by $z_{\ast}$.

\section{Quasiperiodic motion of charged particle on unit sphere\label{sec:charged particle}}

Let $\mathbb{E}^{3}=\left(\mathbb{R}^{3},\left\langle \cdot,\cdot\right\rangle \right)$
be the 3-dimensional Euclidean space endowed with a scalar product
$\left\langle \cdot,\cdot\right\rangle $ and cross-product $\cdot\times\cdot$.
$ $ Consider a charged particle of unit mass which is constrained
to move on the surface of the sphere $\mathbb{S}^{2}:=\left\{ \mathbf{x}\in\mathbb{E}^{3}:\left\Vert \mathbf{x}\right\Vert ^{2}=1\right\} $
by the applied force $\Phi$ represented in the form
\begin{gather*}
\boldsymbol{\Phi}(t\omega,\mathbf{x},\dot{\mathbf{x}})=-\frac{\mathbf{x}-\mathbf{a}}{\left\Vert \mathbf{x}-\mathbf{a}\right\Vert ^{3}}+\mathbf{E}(t\omega)+\dot{\mathbf{x}}\times\mathbf{B}(t\omega)-\kappa\dot{\mathbf{x}}.
\end{gather*}
Here $\mathbf{a}\in\mathbb{E}^{3}$ is a vector of norm $a:=\left\Vert \mathbf{a}\right\Vert $;
$b$, $\kappa$ are positive parameters; $\mathbf{E}(\cdot):\mathbb{T}^{k}\mapsto\mathbb{E}^{3}$
and $\mathbf{B}(\cdot):\mathbb{T}^{k}\mapsto\mathbb{E}^{3}$ are smooth
mappings; $\omega\in\mathbb{R}^{k}$ is a frequency vector. The force
$\boldsymbol{\Phi}$ can $ $ be naturally interpreted as the superposition
of three forces: the Coulomb force caused by a charge placed at point
$\mathbf{a}$; the Lorentz force caused by the electric field $\mathbf{E}$
and the magnetic field $\mathbf{B}$; the damping force  $-\varkappa\dot{\mathbf{x}}$.

Subtracting from $\boldsymbol{\Phi}(t\omega,\mathbf{x},\dot{\mathbf{x}})$
its normal component and introducing unit vector $\mathbf{k}:=-\mathbf{a}/a$,
we find that in the case under consideration the forces affecting
the motion of the constrained particle are

\begin{gather*}
\mathbf{f}(t\omega,\mathbf{x})=-\frac{\mathbf{x}+a\mathbf{k}}{\left\Vert \mathbf{x}+a\mathbf{k}\right\Vert ^{3}}+\mathbf{E}(t\omega)+\left\langle \frac{\mathbf{x}+a\mathbf{k}}{\left\Vert \mathbf{x}+a\mathbf{k}\right\Vert ^{3}}-\mathbf{E}(t\omega),\mathbf{x}\right\rangle \mathbf{x},\\
P(t\omega,\mathbf{x})\dot{\mathbf{x}}=\dot{\mathbf{x}}\times\mathbf{B}(t\omega)-\left\langle \dot{\mathbf{x}}\times\mathbf{B}(t\omega),\mathbf{x}\right\rangle \mathbf{x}-\kappa\dot{\mathbf{x}}.
\end{gather*}

Recall that if $\mathbf{v}(\cdot):\mathbb{S}^{2}\mapsto\mathbb{E}^{3}$
is a smooth tangent vector field on $\mathbb{S}^{2}$, i.e. $\left\langle \mathbf{v}(\mathbf{x}),\mathbf{x}\right\rangle =0$
for any $\mathbf{x}\in\mathbb{S}^{2}$, then for any $\mathbf{h}\in T_{\mathbf{x}}\mathbb{S}^{2}$
we have
\[
\nabla_{\mathbf{h}}\mathbf{v}(\mathbf{x})=\mathbf{v}^{\prime}(\mathbf{x})\mathbf{h}-\left\langle \mathbf{v}^{\prime}(\mathbf{x})\mathbf{h},\mathbf{x}\right\rangle \mathbf{x}.
\]

First consider the case where the influence of magnetic field and
the damping force can be neglected.
\begin{thm}
Let $\mathbf{B}(\varphi)\equiv0$, $\kappa=0$. If there holds the
inequality
\begin{gather}
\frac{a}{\left(1+a\right)^{3}}-\left\langle \mathbf{E}(\varphi),\mathbf{k}\right\rangle >0\quad\forall\varphi\in\mathbb{T}^{k}\label{eq:ab>E}
\end{gather}
and there exists a point $\varphi_{0}\in\mathbb{T}^{k}$ such that
$\mathbf{E}(\varphi_{0})\not\parallel\mathbf{k}$, then the system
of charged particle on $\mathbb{S}^{1}$ has a unique $\omega$-quasiperiodic
solution located in the hemisphere $\left\{ \mathbf{x}\in\mathbb{S}^{1}:0<\left\langle \mathbf{x},\mathbf{k}\right\rangle \le1\right\} $.
This solution is hyperbolic. \label{thm:charged-particle}\end{thm}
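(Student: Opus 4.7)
The plan is to reduce to Theorem~\ref{thm:quasiper-sol-f}. Since $\mathbf{B}(\varphi)\equiv\mathbf{0}$ and $\kappa=0$, the tensor $P$ vanishes identically and the equation of motion is exactly~(\ref{eq:qp-sys-f}); it is enough to exhibit an auxiliary function $U$ and a bounded domain $\mathcal{D}\subset\mathbb{S}^{2}$ verifying Hypotheses~\textbf{H1}--\textbf{H3} together with~(\ref{eq:II-cond}). As $\mathbb{S}^{2}$ has constant sectional curvature $K\equiv 1$, the requirement $\mu_{U}\ge 2K=2$ is stringent, and I would take
\[
U(\mathbf{x}):=-2\ln\langle\mathbf{k},\mathbf{x}\rangle,\qquad \mathcal{D}:=\{\mathbf{x}\in\mathbb{S}^{2}:\langle\mathbf{k},\mathbf{x}\rangle>c_{0}\},
\]
with $c_{0}\in(0,1)$ to be fixed small at the end. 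Write $c:=\langle\mathbf{k},\mathbf{x}\rangle$ and $r:=\|\mathbf{x}+a\mathbf{k}\|=\sqrt{1+2ac+a^{2}}$. Applying the projection formula for the Levi-Civita derivative stated in the excerpt gives $\nabla U=2(c\mathbf{x}-\mathbf{k})/c$ and, for unit $\eta\in T_{\mathbf{x}}\mathbb{S}^{2}$, the identities $\langle H_{U}\eta,\eta\rangle=2+(2/c^{2})\langle\mathbf{k},\eta\rangle^{2}$ and $\langle\nabla U,\eta\rangle^{2}=(4/c^{2})\langle\mathbf{k},\eta\rangle^{2}$. The $\langle\mathbf{k},\eta\rangle^{2}$-contributions cancel exactly in~(\ref{eq:def-mu_U}), producing the sharp equalities $\lambda_{U}\equiv 2$ and $\mu_{U}\equiv 2=2K$, which secures~(\ref{eq:cond_U}) and~(\ref{eq:cond-mu}).

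The heart of the argument is an analogous cancellation for the force. Differentiating $f(\varphi,\mathbf{x})$ covariantly (using $\partial_{\eta}r=a\langle\mathbf{k},\eta\rangle/r$) yields
\[
\nabla_{\eta}f=\Bigl[\tfrac{ac}{r^{3}}-\langle\mathbf{E}(\varphi),\mathbf{x}\rangle\Bigr]\eta+\tfrac{3a^{2}\langle\mathbf{k},\eta\rangle}{r^{5}}(\mathbf{k}-c\mathbf{x}),
\]
so that $\lambda_{f}(\varphi,\mathbf{x})=\tfrac{ac}{r^{3}}-\langle\mathbf{E}(\varphi),\mathbf{x}\rangle$ (the second summand is non-negative and vanishes exactly for $\eta\perp\mathbf{k}$). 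Combining with $\tfrac{1}{2}\langle\nabla U,f\rangle=\tfrac{1}{c}\bigl[\tfrac{a(1-c^{2})}{r^{3}}-\langle\mathbf{E}(\varphi),\mathbf{k}\rangle+c\langle\mathbf{E}(\varphi),\mathbf{x}\rangle\bigr]$, the cross-terms $\pm\langle\mathbf{E}(\varphi),\mathbf{x}\rangle$ cancel and the $ac/r^{3}$- and $a(1-c^{2})/(cr^{3})$-terms merge, producing the remarkably clean identity
\[
\lambda_{f}+\tfrac{1}{2}\langle\nabla U,f\rangle=\tfrac{1}{c}\Bigl[\tfrac{a}{r^{3}}-\langle\mathbf{E}(\varphi),\mathbf{k}\rangle\Bigr].
\]
Since $c\le 1$ forces $r\le 1+a$ on $\mathrm{cl}(\mathcal{D})$, hypothesis~(\ref{eq:ab>E}) delivers $\tfrac{a}{r^{3}}\ge\tfrac{a}{(1+a)^{3}}>\langle\mathbf{E}(\varphi),\mathbf{k}\rangle$ uniformly in $(\varphi,\mathbf{x})$, which is exactly~(\ref{eq:lamb_f+nablaU}); hence~\textbf{H3} holds.

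The boundary checks are routine. The outward normal to $\partial\mathcal{D}$ is $\nu=(c\mathbf{x}-\mathbf{k})/\sqrt{1-c^{2}}$, a positive multiple of $\nabla U$, and a short calculation gives $\nabla_{\xi}\nu=c\xi/\sqrt{1-c^{2}}$ for $\xi\in T_{\mathbf{x}}\partial\mathcal{D}$, so $\lambda_{II}=c_{0}/\sqrt{1-c_{0}^{2}}>0$ and $\langle\nabla U,\nu\rangle=2\sqrt{1-c_{0}^{2}}/c_{0}>0$, immediately giving~(\ref{eq:II-cond}). Parametrising $\mathbf{x}\in\partial\mathcal{D}$ as $c_{0}\mathbf{k}+\sqrt{1-c_{0}^{2}}\,\mathbf{w}$ with unit $\mathbf{w}\perp\mathbf{k}$, one obtains
\[
\langle\nu,f\rangle=\sqrt{1-c_{0}^{2}}\Bigl[\tfrac{a}{r_{0}^{3}}-\langle\mathbf{E}(\varphi),\mathbf{k}\rangle\Bigr]+c_{0}\langle\mathbf{E}(\varphi),\mathbf{w}\rangle,
\]
and as $c_{0}\downarrow 0$ the first summand tends to $\tfrac{a}{(1+a^{2})^{3/2}}-\langle\mathbf{E},\mathbf{k}\rangle$, uniformly bounded below by $\tfrac{a}{(1+a)^{3}}-\max_{\varphi}\langle\mathbf{E}(\varphi),\mathbf{k}\rangle>0$ (since $(1+a^{2})^{3/2}<(1+a)^{3}$), while the second summand is $O(c_{0})$; fixing $c_{0}>0$ small secures~\textbf{H2}. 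The second part of~\textbf{H1} uses the hypothesis $\mathbf{E}(\varphi_{0})\not\parallel\mathbf{k}$, which makes $\mathbf{E}_{\perp}(\varphi_{0}):=\mathbf{E}(\varphi_{0})-\langle\mathbf{E}(\varphi_{0}),\mathbf{k}\rangle\mathbf{k}$ non-zero: at $\mathbf{x}=\cos\theta\,\mathbf{k}-\sin\theta\,\mathbf{E}_{\perp}(\varphi_{0})/\|\mathbf{E}_{\perp}(\varphi_{0})\|$ with small $\theta>0$, the leading-order term of $\langle\nabla U,f(\varphi_{0},\mathbf{x})\rangle$ is $-2\theta\|\mathbf{E}_{\perp}(\varphi_{0})\|/c<0$. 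Theorem~\ref{thm:quasiper-sol-f} then produces a unique hyperbolic $\omega$-quasiperiodic solution in $\mathcal{D}$; uniqueness inside the whole hemisphere $\{c>0\}$ follows by compactness — any such solution has compact image, and shrinking $c_{0}$ below the infimum of its $c$-coordinate forces it into $\mathcal{D}$. The main obstacle is identifying the logarithmic ansatz: it is essentially the unique choice on $\mathbb{S}^{2}$ that simultaneously saturates $\mu_{U}=2K$ and engineers the algebraic cancellation reducing~(\ref{eq:lamb_f+nablaU}) exactly to the scalar inequality~(\ref{eq:ab>E}).
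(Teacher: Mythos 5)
Your proposal is correct and follows essentially the same route as the paper: the function $U(\mathbf{x})=-2\ln\langle\mathbf{k},\mathbf{x}\rangle$ is exactly the paper's $-\ln u^{2}(\mathbf{x})$ with $u=-\langle\mathbf{k},\mathbf{x}\rangle$, the domain is the same spherical cap, and the computations of $\lambda_{U}=2$, $\mu_{U}=2=2K$, $\lambda_{\mathbf{f}}$, the cancellation yielding $\lambda_{\mathbf{f}}+\tfrac12\langle\nabla U,\mathbf{f}\rangle=c^{-1}\bigl[a r^{-3}-\langle\mathbf{E},\mathbf{k}\rangle\bigr]$, the boundary quantities, and the use of $\varphi_{0}$ to make $q>0$ all coincide with the paper's. (The only step the paper does not attempt either is upgrading uniqueness from $\mathcal{D}$ to the full open hemisphere; your compactness remark shares the same small gap as the original, namely that a solution's $c$-coordinate could a priori have infimum zero.)
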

\begin{proof}
Let a unit tangent vector $\mathbf{e}\in T_{\mathbf{x}}\mathbb{S}^{1}$
be taken at will. Then in view of $\left\langle \mathbf{x},\mathbf{e}\right\rangle =0$
we have
\begin{gather*}
\left\langle \nabla_{\mathbf{e}}\mathbf{f}(\varphi,\mathbf{x}),\mathbf{e}\right\rangle =\left\langle \mathbf{f}_{\mathbf{x}}^{\prime}(\varphi,\mathbf{x})\mathbf{e},\mathbf{e}\right\rangle =\\
-\frac{1}{\left\Vert \mathbf{x}+a\mathbf{k}\right\Vert ^{3}}+\frac{3a^{2}\left\langle \mathbf{k},\mathbf{e}\right\rangle ^{2}}{\left\Vert \mathbf{x}+a\mathbf{k}\right\Vert ^{5}}+\left\langle \frac{\mathbf{x}+a\mathbf{k}}{\left\Vert \mathbf{x}+a\mathbf{k}\right\Vert ^{3}}-\mathbf{E}(\varphi),\mathbf{x}\right\rangle =\\
\frac{a\left\langle \mathbf{k},\mathbf{x}\right\rangle }{\left\Vert \mathbf{x}+a\mathbf{k}\right\Vert ^{3}}+\frac{3a^{2}\left\langle \mathbf{k},\mathbf{e}\right\rangle ^{2}}{\left\Vert \mathbf{x}+a\mathbf{k}\right\Vert ^{5}}-\left\langle \mathbf{E}(\varphi),\mathbf{x}\right\rangle .
\end{gather*}
It is not hard to see that
\begin{gather*}
\lambda_{\mathbf{f}}(\varphi,\mathbf{x})=\frac{a\left\langle \mathbf{k},\mathbf{x}\right\rangle }{\left\Vert \mathbf{x}+a\mathbf{k}\right\Vert ^{3}}-\left\langle \mathbf{E}(\varphi),\mathbf{x}\right\rangle .
\end{gather*}
In particular,
\begin{gather*}
\lambda_{\mathbf{f}}(\varphi,\mathbf{k})=\frac{a}{\left(1+a\right){}^{3}}-\left\langle \mathbf{E}(\varphi),\mathbf{k}\right\rangle >0.
\end{gather*}
Now, in order to apply Theorem~\ref{thm:quasiper-sol-f}, we are
going to find the appropriate domain $\mathcal{D}$ and function $U(\cdot)$.
$ $ Observe that for a function $U(\cdot):\mathbb{S}^{2}\mapsto\mathbb{R}$
such that $\nabla U(\mathbf{k})=0$, the inequality(\ref{eq:lamb_f+nablaU})
holds true at least near $\mathbf{k}$. For this reason, we define
the domain
\begin{gather*}
\mathcal{D}:=\left\{ \mathbf{x}\in\mathbb{S}^{2}:\;\rho<\left\langle \mathbf{k},\mathbf{x}\right\rangle \le1\right\}
\end{gather*}
where $\rho\in(0,1)$ will be determined later.

Set $u(\mathbf{x}):=-\left\langle \mathbf{k},\mathbf{x}\right\rangle $.
To satisfy the conditions of Theorem~\ref{thm:quasiper-sol-f} we
seek the function $U(\cdot)$ in the form $U(\mathbf{x})=y\circ u(\mathbf{x})$.
For $\mathbf{e}\in T_{\mathbf{x}}\mathbb{S}^{2}$, $\left\Vert \mathbf{e}\right\Vert =1$,
we have
\begin{gather*}
\nabla_{\mathbf{e}}\nabla U(\mathbf{x})=\left[y^{\prime\prime}(u)\left\langle \nabla u,\mathbf{e}\right\rangle \nabla u+y^{\prime}(u)\nabla_{\mathbf{e}}\nabla u\right]_{u=u(\mathbf{x})}.
\end{gather*}
 On account that
\begin{gather}
\nabla u(\mathbf{x})=-\mathbf{k}+\left\langle \mathbf{k},\mathbf{x}\right\rangle \mathbf{x}=-u(\mathbf{x})\mathbf{x}-\mathbf{k},\quad\nabla_{\mathbf{e}}\nabla u(\mathbf{x})=-u(\mathbf{x})\mathbf{e},\label{eq:nabla-u-Hess-u}
\end{gather}
 we obtain
\begin{gather*}
\left\langle H_{U}(\mathbf{x})\mathbf{e},\mathbf{e}\right\rangle =\left[y^{\prime\prime}(u)\left\langle \mathbf{k},\mathbf{e}\right\rangle ^{2}+y^{\prime}(u)\left\langle \mathbf{k},\mathbf{x}\right\rangle \right]_{u=u(\mathbf{x})}=\left[\left\langle \mathbf{k},\mathbf{e}\right\rangle ^{2}y^{\prime\prime}(u)-uy^{\prime}(u)\right]_{u=u(\mathbf{x})},\\
\left\langle \nabla U(\mathbf{x}),\mathbf{e}\right\rangle ^{2}=y^{\prime}{}^{2}(u(\mathbf{x}))\left\langle \nabla u(\mathbf{x}),\mathbf{e}\right\rangle ^{2}=\left\langle \mathbf{k},\mathbf{e}\right\rangle ^{2}y^{\prime}{}^{2}(u(\mathbf{x})).
\end{gather*}
As is well known, $K(\mathbf{x})=1$ for $\mathcal{M}=\mathbb{S}^{2}$,
and in our case the inequality $\mu_{U}(x)\ge2K(x)$ of Hypothesis~\textbf{H3}
takes the form
\begin{gather*}
\left[\left\langle \mathbf{k},\mathbf{e}\right\rangle ^{2}(y^{\prime\prime}(u)-y^{\prime2}(u)/2)-uy^{\prime}(u)\right]_{u=u(\mathbf{x})}\ge2\quad\forall u\in[-1,-\rho).
\end{gather*}
This inequality obviously turns into equality if we put $y=-\ln u^{2}$.
Hence, it is naturally to define
\begin{gather*}
U(\mathbf{x}):=-\ln u^{2}(\mathbf{x}).
\end{gather*}
$\frac{}{}$Let us verify Hypothesis \textbf{H1}. Under the above
choice of $U(\cdot)$, we get
\begin{gather}
\nabla U(\mathbf{x})=-\frac{2\nabla u(\mathbf{x})}{u(\mathbf{x})},\quad\left\langle H_{U}(x)\mathbf{e},\mathbf{e}\right\rangle =\frac{2\left\langle \mathbf{k},\mathbf{e}\right\rangle ^{2}}{u^{2}(\mathbf{x})}+2\quad\Rightarrow\quad\lambda_{U}(\mathbf{x})=2.\label{eq:nabla-U-Hess-U}
\end{gather}
Since the third addendum in expression for $\mathbf{f}(\varphi,\mathbf{x})$
is orthogonal to $ $$\mathbb{S}^{2}$, then on account of~(\ref{eq:nabla-u-Hess-u})
\begin{gather}
\left\langle \nabla U(\mathbf{x}),\mathbf{f}(\varphi,\mathbf{x})\right\rangle =\frac{2}{u(\mathbf{x})}\left\langle \nabla u(\mathbf{x}),\frac{\mathbf{x}+a\mathbf{k}}{\left\Vert \mathbf{x}+a\mathbf{k}\right\Vert ^{3}}-\mathbf{E}(\varphi)\right\rangle =\nonumber \\
-\frac{2}{u(\mathbf{x})}\left[\frac{a\left(1-u^{2}(\mathbf{x})\right)}{\left\Vert \mathbf{x}+a\mathbf{k}\right\Vert ^{3}}-\left\langle \mathbf{E}(\varphi),\mathbf{k}+u(\mathbf{x})\mathbf{x}\right\rangle \right].\label{eq:nblaU-f}
\end{gather}
We have to show that this function has negative minimum in $\mathbb{T}^{k}\times\mathrm{cl}(\mathcal{D})$,
or, what is the same, the parameter $q$ (see~(\ref{eq:def-q1}))
is correctly defined, i.e. actually is positive. Let $\mathbf{i},\mathbf{j},\mathbf{k}$
be the standard right-oriented orthonormal basis in $\mathbb{E}^{3}$.
Denote by $E_{\mathbf{i}}(\varphi),E_{\mathbf{j}}(\varphi),E_{\mathbf{k}}(\varphi)$
the projections of $\mathbf{E}(\varphi)$ onto $\mathbf{i},\mathbf{j},\mathbf{k}$
respectively. It turns out that for fixed $\varphi\in\mathbb{T}^{k}$
and $s\in[\rho,1]$ the conditional maximum
\begin{gather*}
M(s,\varphi):=\max\left\{ -\frac{\left\langle \nabla U(x),\mathbf{f}(\varphi,\mathbf{x})\right\rangle }{\lambda_{U}(\mathbf{x})}:\left\langle \mathbf{k},\mathbf{x}\right\rangle =s\right\}
\end{gather*}
is attained at point $\mathbf{x}\in\mathbb{S}^{2}$ such that
\begin{gather*}
\left\langle \mathbf{i},\mathbf{x}\right\rangle =-\frac{\sqrt{1-s^{2}}E_{\mathbf{i}}(\varphi)}{\sqrt{E_{\mathbf{i}}^{2}(\varphi)+E_{\mathbf{j}}^{2}(\varphi)}},\quad\left\langle \mathbf{j},\mathbf{x}\right\rangle =-\frac{\sqrt{1-s^{2}}E_{\mathbf{j}}(\varphi)}{\sqrt{E_{\mathbf{i}}^{2}(\varphi)+E_{\mathbf{j}}^{2}(\varphi)}},\quad\left\langle \mathbf{k},\mathbf{x}\right\rangle =s.
\end{gather*}
Hence,
\begin{gather*}
M(s,\varphi)=-\frac{a(1-s^{2})}{s(1+2sa+a^{2})^{3/2}}+\frac{(1-s^{2})E_{\mathbf{k}}(\varphi)}{s}+\sqrt{1-s^{2}}\sqrt{E_{\mathbf{i}}^{2}(\varphi)+E_{\mathbf{j}}^{2}(\varphi)}.
\end{gather*}
$ $Since there exists a point $\varphi_{0}\in\mathbb{T}^{k}$ such
that $\mathbf{E}(\varphi_{0})\not\parallel\mathbf{k}$, then $M(s,\varphi_{0})>0$
if $s$ is sufficiently close to 1. Hence we show that
\begin{gather*}
q^{2}=\max\left\{ M(s,\varphi):\varphi\in\mathbb{T}^{k},s\in[\rho,1]\right\} >0
\end{gather*}
 and that Hypothesis \textbf{H1} is satisfied.$ $

Let us verify Hypothesis \textbf{H2.} Since $\partial\mathcal{D}:=\left\{ \mathbf{x}\in\mathbb{S}^{2}:\; u(\mathbf{x})=-\rho\right\} $,
then on account of~(\ref{eq:nabla-u-Hess-u}) the outward unit normal
at $\mathbf{x}\in\partial\mathcal{D}$ is

\begin{gather*}
\mathbf{n}(\mathbf{x})=\frac{\nabla u(\mathbf{x})}{\left\Vert \nabla u(\mathbf{x})\right\Vert }=\frac{\nabla u(\mathbf{x})}{\sqrt{1-u^{2}(\mathbf{x})}}=\frac{\rho}{2\sqrt{1-\rho^{2}}}\nabla U(\mathbf{x}),\quad\mathbf{x}\in\partial\mathcal{D}.
\end{gather*}
 Now from~(\ref{eq:nabla-u-Hess-u}), (\ref{eq:nabla-U-Hess-U})
it follows that
\begin{gather*}
\lambda_{II}(\mathbf{x})=\frac{\left\langle \nabla_{\mathbf{e}}\nabla u(\mathbf{x}),\mathbf{e}\right\rangle }{\left\Vert \nabla u(\mathbf{x})\right\Vert }=\frac{\rho}{\sqrt{1-\rho^{2}}}>0\quad\forall\mathbf{x}\in\partial\mathcal{D}.
\end{gather*}
 Observe that $\mathbf{n}(\mathbf{x})=\frac{\rho}{2\sqrt{1-\rho^{2}}}\nabla U(\mathbf{x}).$
Now (\ref{eq:nblaU-f}) yields
\begin{gather*}
\min\left\{ \left\langle \mathbf{n}(\mathbf{x}),\mathbf{f}(\varphi,\mathbf{x})\right\rangle :\mathbf{x}\in\partial\mathcal{D}\right\} =-\frac{\rho}{\sqrt{1-\rho^{2}}}M(\rho,\varphi)=\\
\sqrt{1-\rho^{2}}\left[\frac{a}{(1+2\rho a+a^{2})^{3/2}}-E_{\mathbf{k}}(\varphi)\right]-\rho\sqrt{E_{\mathbf{i}}^{2}(\varphi)+E_{\mathbf{j}}^{2}(\varphi)},
\end{gather*}
\begin{gather*}
\lambda_{II}(\mathbf{x})\left\langle \mathbf{n}(\mathbf{x}),\mathbf{f}(\varphi,\mathbf{x})\right\rangle \ge\frac{\rho}{2}\left[\frac{a}{(1+2\rho a+a^{2})^{3/2}}-E_{\mathbf{k}}(\varphi)\right]-\frac{\rho^{2}}{2}\sqrt{\frac{E_{\mathbf{i}}^{2}(\varphi)+E_{\mathbf{j}}^{2}(\varphi)}{1-\rho^{2}}}.
\end{gather*}
The condition~(\ref{eq:ab>E}) implies that here the right-hand sides
of both inequalities are positive once $\rho$ is sufficiently small.
Besides,
\begin{gather*}
\left\langle \nabla U(\mathbf{x}),\mathbf{n}(\mathbf{x})\right\rangle =-\frac{2\left\Vert \nabla u(\mathbf{x})\right\Vert ^{2}}{u(\mathbf{x})\sqrt{1-u^{2}(\mathbf{x})}}=\frac{2\sqrt{1-\rho^{2}}}{\rho}>0\quad\forall\mathbf{x}\in\partial\mathcal{D}.
\end{gather*}
Thus we see that both Hypothesis \textbf{H2} and the inequality~(\ref{eq:II-cond})
holds true. .

To verify that the $U$-monotonicity condition is satisfied, observe
that

\begin{gather*}
\lambda_{\mathbf{f}}(\varphi,\mathbf{x})+\frac{1}{2}\left\langle \nabla U(\mathbf{x}),\mathbf{f}(\varphi,\mathbf{x})\right\rangle =\\
-\frac{1}{u(\mathbf{x})}\left[\frac{a}{\left\Vert \mathbf{x}+a\mathbf{k}\right\Vert ^{3}}-\left\langle \mathbf{E}(\varphi),\mathbf{k}\right\rangle \right]=-\frac{1}{u(\mathbf{x})}\left[\frac{a}{\left(1-2au(\mathbf{x})+a^{2}\right)^{3/2}}-\left\langle \mathbf{E}(\varphi),\mathbf{k}\right\rangle \right]\ge\\
\frac{a}{\left(1+a\right)^{3}}-\left\langle \mathbf{E}(\varphi),\mathbf{k}\right\rangle >0.
\end{gather*}
Since it has been have already shown that~(\ref{eq:cond-mu}) is
fulfilled, we complete the proof by applying Theorem~(\ref{thm:quasiper-sol-f})
\end{proof}
Now let us proceed to the  system perturbed by magnetic field and
damping. Define
\begin{gather*}
E:=\max_{\varphi\in\mathbb{T}^{k}}\left\Vert \mathbf{E}(\varphi)\right\Vert ,\quad\beta:=\max_{\varphi\in\mathbb{T}^{k}}\left\Vert \mathbf{B}(\varphi)\right\Vert /\rho,\quad\varkappa:=\kappa/\rho.
\end{gather*}

In order to make the results concerning the perturbed system more
demonstrative, consider the case where $E_{\mathbf{k}}(\varphi)=0$,
$\mathbf{B}(\varphi)\perp\mathbf{E}(\varphi)$ , i.e. $\mathbf{B}(\varphi)=B(\varphi)\mathbf{k}$
where $B(\cdot):\mathbb{T}^{k}\mapsto\mathbb{R}$ is a smooth function
(the typical case of crossed electric and magnetic field), and $\varkappa\le\beta$.
It is not hard to show that when applying the results of Section~\ref{sec:Not-hyp-res}
one can set
\begin{gather*}
M_{f}(\mathbf{x}):=\left\Vert \mathbf{x}+a\mathbf{k}\right\Vert ^{-2}+E,\\
C_{f}:=\frac{1}{2}\left[\max\left\{ \left(1+2as+a^{2}\right)^{-1}:\rho\le s\le1\right\} +E\right]=\\
\frac{1}{2}\left[E+(1+2a\rho+a^{2})^{-1}\right]\le\frac{1}{2}\left[E+(1+a^{2})^{-1}\right],
\end{gather*}

\begin{gather*}
M_{U}(\mathbf{x}):=\frac{2\sqrt{1-\left\langle \mathbf{k},\mathbf{x}\right\rangle ^{2}}}{\left\langle \mathbf{k},\mathbf{x}\right\rangle },\quad C_{U}:=\max_{\rho\le s\le1}\frac{2\sqrt{1-s^{2}}}{s}=\frac{2\sqrt{1-\rho^{2}}}{\rho},\\
M_{P}(\varphi,\mathbf{x}):=\rho\sqrt{\beta^{2}\left\langle \mathbf{k},\mathbf{x}\right\rangle ^{2}+\varkappa^{2}},\quad M_{PU}(\varphi,\mathbf{x}):=\frac{2\rho\sqrt{\left(1-\left\langle \mathbf{k},\mathbf{x}\right\rangle ^{2}\right)\left(\beta^{2}\left\langle \mathbf{k},\mathbf{x}\right\rangle ^{2}+\varkappa^{2}\right)}}{\left\langle \mathbf{k},\mathbf{x}\right\rangle },
\end{gather*}

\begin{gather*}
p:=\max_{0\le s\le1}\sqrt{\left(1-s^{2}\right)(\beta^{2}s^{2}+\varkappa^{2})}=\frac{\left(\beta^{2}+\varkappa^{2}\right)}{2\beta},\quad l:=\frac{(1+a)^{2}\varkappa\rho}{(1+a)^{2}E+1}.
\end{gather*}
(Here we use the equalities $\left\Vert \mathbf{e}\times\mathbf{k}-\left\langle \mathbf{e}\times\mathbf{k},\mathbf{x}\right\rangle \mathbf{x}\right\Vert ^{2}=\left\Vert \mathbf{e}\times\mathbf{k}\right\Vert ^{2}-\left\langle \mathbf{e\times\mathbf{k}},\mathbf{x}\right\rangle ^{2}=\left\langle \mathbf{k},\mathbf{x}\right\rangle ^{2}$.)

Let us evaluate $L_{P}$$(\varphi,\mathbf{x})$. Observe that a geodesic
$\gamma$ on $\mathbb{S}^{2}$ passing through a point $\mathbf{x}$
at direction of a unite vector $\mathbf{e}\in T_{\mathbf{x}}\mathbb{S}^{2}$
coincides with an orbit of one-parameter subgroup $\left\{ \mathrm{e}^{\Omega t}\right\} _{t\in\mathbb{R}}$
of the group $\mathrm{SO}(3)$. Such a geodesic is the rotation with
angular velocity $\mathbf{w}:=\mathbf{x}\times\mathbf{e}$, and $\Omega$
is a skew-symmetric operator such that $\Omega\mathbf{x}\equiv\mathbf{w}\times\mathbf{x}$.
In addition, for any $\mathbf{e}_{1}\in\in T_{\mathbf{x}_{0}}\mathbb{S}^{2}$
the mapping $t\mapsto\mathrm{e}^{\Omega t}\mathbf{e}_{1}$ is the
parallel translation along $\gamma$. Hence, for any $\mathbf{e},\mathbf{e}_{1}\in T_{\mathbf{x}}\mathbb{S}^{2}$,
we have
\begin{gather*}
\left|\left\langle \nabla P(\varphi,\mathbf{x})(\mathbf{e},\mathbf{e}_{1}),\mathbf{e}\right\rangle \right|=\left|\frac{\mathrm{d}}{\mathrm{d}t}\Bigl|_{t=0}\left\langle P\left(\varphi,\mathrm{e}^{\Omega t}\mathbf{x}\right)\mathrm{e}^{\Omega t}\mathbf{e}_{1},\mathrm{e}^{\Omega t}\mathbf{e}\right\rangle \right|=\\
\left|\frac{\mathrm{d}}{\mathrm{d}t}\Bigl|_{t=0}\left[\left\langle \left(\mathrm{e}^{t\Omega}\mathbf{e}_{1}\right)\times\mathbf{B}(\varphi),\mathrm{e}^{t\Omega}\mathbf{e}\right\rangle -\varkappa\left\langle \mathrm{e}^{t\Omega}\mathbf{e}_{1},\mathrm{e}^{t\Omega}\mathbf{e}\right\rangle \right]\right|=\\
\left|\frac{\mathrm{d}}{\mathrm{d}t}\Bigl|_{t=0}\left\langle \mathbf{e}_{1}\times\left(\mathrm{e}^{-t\Omega}\mathbf{B}(\varphi)\right),\mathbf{e}\right\rangle \right|=\left|\left\langle \mathbf{e}_{1}\times\left(\mathbf{B}(\varphi)\times\mathbf{w}\right),\mathbf{e}\right\rangle \right|=\\
\left\Vert \mathbf{B}(\varphi)\right\Vert \left|\left\langle \mathbf{k},\mathbf{e}\right\rangle \right|\left|\left\langle \mathbf{x}\times\mathbf{e},\mathbf{e}_{1}\right\rangle \right|.
\end{gather*}
 Since the maximum is attained when $\mathbf{e}_{1}=\mathbf{x}\times\mathbf{e}$,
$\mathbf{e}=\mathbf{k}-\left\langle \mathbf{k},\mathbf{x}\right\rangle \mathbf{x}$,
we find
\begin{gather*}
L_{P}(\varphi,\mathbf{x})=\left\Vert \mathbf{B}(\varphi)\right\Vert \sqrt{1-\left\langle \mathbf{k},\mathbf{x}\right\rangle ^{2}}
\end{gather*}

Thus conditions~(\ref{eq:4lambda-ge}) and (\ref{eq:labdaf-g-sigma}),
respectively, take the form
\begin{gather*}
\frac{a}{(1+2\rho a+a^{2})^{3/2}}>\frac{E\rho}{\sqrt{1-\rho^{2}}}+\frac{\left(\beta^{2}\rho^{2}+\varkappa^{2}\right)\rho}{2},
\end{gather*}
 $\mbox{\ensuremath{}}$

\begin{gather*}
\frac{a}{\left(1+2as+a^{2}\right)^{3/2}}-z_{\ast}\rho\sqrt{\left(1-s^{2}\right)}\left(2\sqrt{(\beta^{2}s^{2}+\varkappa^{2})}+\beta\right)-\\
\frac{s\rho^{2}(\beta^{2}s^{2}+\varkappa^{2})}{4}>0\quad\forall s\in[\rho,1].
\end{gather*}
Obviously, these inequalities are fulfilled once $\rho$ is small
enough to satisfy the condition
\begin{gather*}
\frac{a}{(1+a)^{3}}>\max\left\{ \rho z_{\ast}\left[\frac{2\beta^{2}+\varkappa^{2}}{\beta}+\frac{\rho(\beta^{2}+\varkappa^{2})}{4}\right],\frac{\rho E}{\sqrt{1-\rho^{2}}}+\frac{\rho\left(\beta^{2}\rho^{2}+\varkappa^{2}\right)}{2}\right\} .
\end{gather*}
Observe that on account of Remark~\ref{rem: Th-B-sol-f-P} we have
\begin{gather*}
\rho z_{\ast}\le\rho z_{+}+\sqrt{\frac{\rho z_{+}}{q}}(1-\rho^{2})^{1/4}(1+lz_{+})\sqrt{\left(E+\frac{1}{1+a^{2}}\right)}.
\end{gather*}
The above inequalities allow us to establish an upper bound for magnitude
of perturbation which does not destroy the quasiperiodic solution
obtained in Theorem~\ref{thm:charged-particle}.

It should be noted that in~\cite{BarGer10} the authors study trajectories
of autonomous system governing the motion of classical particles accelerated
by a potential and a magnetic field on a non-complete Riemannian manifold.

\section*{Final remarks}

Since Lyapunov proposed his direct method, the analysis of nonlinear
systems by means of auxiliary functions whose level sets are transversal
to vector fields of systems' right hand sides was successfully carried
out by many authors (see, e.g.,~\cite{KKMP94,KraPer84,MawWar02,OZVK13}
). The success in constructing such functions for concrete systems
depends on the art of researcher. In the case where the system~(\ref{eq:qp-sys-f})
is a Lagrangian one, i.e. $f(\varphi,x)=-\nabla\Pi(\varphi,x)$, it
is naturally to seek the auxiliary function $U(\cdot)$ using the
averaged function $\intop_{\mathbb{T}^{k}}\Pi(\varphi,x)\mathrm{d}\varphi$,
as was proposed in~\cite{ParRus12}. We will devote a separate paper
to further applications of the results obtained.

\end{document}